\theoremstyle{plain}
\newtheorem{theorem}{Theorem}[section]
 \newtheorem{corollary}[theorem]{Corollary}
\newtheorem{lem}[theorem]{Lemma}
 \newtheorem{lemma}[theorem]{Lemma}
 \newtheorem{remark}[theorem]{Remark}
 \def\beqlb{\begin{eqnarray}}\def\eeqlb{\end{eqnarray}}
 \def\beqnn{\begin{eqnarray*}}\def\eeqnn{\end{eqnarray*}}
 \def\ar{\!\!&}
 \def\mbb{\mathbb}
 \def\qed{\hfill$\Box$\medskip}
\newcommand{\bcen}{\begin{center}}
\newcommand{\ecen}{\end{center}}
\newcommand{\bgeqn}{\begin{equation}}
\newcommand{\edeqn}{\end{equation}}
\def\az{\alpha}
\def\bz{\beta}
\def\gz{\gamma}
\def\dz{\delta}
\def\ez{\epsilon}
\def\rar{\rightarrow}
\def\l{\left}
\def\r{\right}
 \def\ar{\!\!\!&}
\begin{document}

%\title{On Large deviation probabilities for stable sums indexed by Galton-Watson processes}

%\author{Hui He}
\begin{center}{\Large\textbf{ On large deviation rates for sums associated with Galton-Watson processes }\footnote{Supported by the Fundamental Research Funds for the Central Universities (2013YB59) and NSFC
  (No. 11201030, 11371061).}}\end{center}

\date{\today}

\bigskip

\centerline{ Hui He\footnote{ \textit{E-mail address:} {
hehui@bnu.edu.cn }} }

\medskip

\centerline{Laboratory of Mathematics and Complex Systems, }

\centerline{ School of Mathematical Sciences, Beijing Normal
University,}

\smallskip

\centerline{ Beijing 100875, People's Republic of China}

\bigskip

%\thanks{This work is supported by SRFDP (20110003120003), the Fundamental Research Funds for the %Central Universities (2013YB59) and NSFC
 % (No. 11201030, 11371061).}

\bigskip

{\narrower{\narrower{\narrower

\begin{abstract}
Given a super-critical Galton-Watson process $\{Z_n\}$ and a positive sequence $\{\epsilon_n\}$, we study the limiting behaviors of $P(S_{Z_n}/Z_n\geq\epsilon_n)$ % and $P(S_{Z_n}/m^n\geq\ez_n) $
with sums $S_{n}$ of i.i.d. random variables $X_i$ and $m=E[Z_1]$. We assume that we are in Schr\"oder case with $EZ_1\log Z_1<\infty$ and $X_1$ is in the domain of attraction of an $\alpha$-stable law with $0<\alpha<2$. As a by-product,  when $Z_1$ is sub-exponentially distributed, we further obtain the convergence rate of $\frac{Z_{n+1}}{Z_n}$ to $m$ as $n\rightarrow\infty$.
%$$
%P\l(\l|-m\r|\geq\ez\r),%\quad %P\l(\l|\frac{Z_{n}}{m^n}-W\r|\geq\ez\r),
%$$
%, where $\epsilon>0$.% and $W\overset{a.s.}{=}\lim_{n\rightarrow\infty}\frac{Z_n}{m^n}.$
\end{abstract}

\smallskip

\noindent\textit{Key words and phrases.} Galton-Watson process, domain of attraction, stable distribution, slowly varying function, large deviation, Lotka-Nagaev estimator,
Schr\"oder constant.
\smallskip

\noindent\textit{AMS 2010 subject classifications.} {60J80, 60F10}

\smallskip

\noindent\textbf{Abbreviated Title:} LDP for sums

\par}\par}\par}

\bigskip\bigskip

%\maketitle
\section{Introduction and Main Results}
\subsection{Motivation}
Let $Z=(Z_n)_{n\geq1}$ be a super-critical Galton-Watson process with $Z_0=1$ and offspring distribution $\{p_k: k\geq0\}$. Define $m=\sum_{k\geq1}kp_k>1$. We assume in this paper that $p_0=0$ and $0<p_1<1$.

It is known that $Z_{n+1}/Z_n\overset{a.s.}{\rar}m$ and $Z_{n+1}/Z_n$ is the so-called Lotka-Nagaev estimator of $m$; see Nagaev \cite{Na69}.  This estimator has been used in  studying amplification rate and the initial number of molecules for amplification process in a quantitative polymerase chain reaction experiment; see \cite{JP96,JP98} and \cite{Pi04}. Concerning the Bahadur efficiency of the estimator leads to investigating the large deviation behaviors of $Z_{n+1}/Z_n$. In fact, it was proved in \cite{Na69} that if $\sigma^2=Var(Z_1)\in (0, \infty)$, then
\beqlb\label{nage}
\lim_{n\rar\infty}P\l(m^{n/2}\l(\frac{Z_{n+1}}{Z_n}-m\r)<x\r)=\int_0^{\infty}
\Phi\l(\frac{x\sqrt{u}}{\sigma}\r)\omega(u)du,
\eeqlb
where $\Phi$ is the standard normal distribution function and $\omega$ denotes the continuous density function of $W:\overset{a.s.}{=}\lim_{n\rar\infty}Z_n/m^n$. In \cite{A94}, Athreya showed that if $p_1m^r>1$ and $E[Z_1^{2r+\dz}]<\infty$ for some $r\geq1$ and $\dz>0$, then
 $$
\lim_{n\rar\infty}\frac{1}{p_1^n}P\l(\l|\frac{Z_{n+1}}{Z_n}-m\r|\geq\ez\r)\text{ exits finitely;}
$$
see also \cite{AV97}. Later, Ney and Vidyashankar \cite{NV03} weakened the assumption and were able to obtain the rate of convergence of  Lotka-Nagaev estimator by studying the asymptotic properties of harmonic moments of $Z_n$, where it was assumed that $P(Z_1\geq x)\sim ax^{1-\eta}$ for some $\eta>2$ and $a>0$. See \cite{NV04} for some further results.

Recently, Fleischmann and Wachtel \cite{FW08} considered a generalization of above problem by studying sums indexed by $Z$; see also \cite{NV04}. More precisely, let $X=(X_n)_{n\geq1}$ denote a family of i.i.d. real-valued random variables. They investigated the large deviation probabilities for ${S_{Z_n}}/{Z_n}$: the convergence rate of
$$
P\l(\frac{S_{Z_n}}{Z_n}\geq\ez_n\r),
$$
as $n\rar\infty$, where $\ez_n\rar0$ is a positive sequence and
$$S_n:={X_1+X_2+\cdots+X_n}.$$ In fact, if $X_1\overset{d}{=}Z_1-m$, then
$$
\frac{S_{Z_n}}{Z_n}\overset{d}{=}\frac{Z_{n+1}}{Z_n}-m.
$$
The assumption in \cite{FW08} is that $E[Z_1\log Z_1]<\infty$, $E[X_1^2]<\infty$ and $P(X_1\geq x)\sim a x^{-\eta}$ for some $\eta>2$, which implies that $X_1$ is in the domain of attraction of normal distributions.

%In all mentioned works above, the large deviation behaviors of ${Z_{n+1}}/{Z_{n}}$ %are considered under the assumption that $Z_1$ has at least finite second moment.
Motivated by above mentioned works, the main purpose of this paper is trying to study the convergence rates of $Z_{n+1}/Z_n$ under weaker conditions. We shall use the framework of \cite{FW08} but we  assume that $E[Z_1\log Z_1]<\infty$ and $X_1$ is in the domain of attraction of a stable law; see Assumptions A and B below. Then we answer a question in \cite{FW08}; see (a) in Remark 11 there. In particular, we further obtain the convergence rate of $Z_{n+1}/Z_n$ under the assumption $P(Z_1>x)\sim {L(x)}{x^{-\bz}}$ for some $1<\bz<2$ and some slowly varying function $L$, which partially improves Theorem 3 in \cite{NV03}.

%Furthermore, under the same assumptions on $Z_1$ and $X_1$,  we also study the convergence rate of
%$$
%P(S_{Z_n}/m^n\geq \ez_n).
%$$
%This is motivated by the work of Athreya \cite{A94}, where the upper bound of probability
%$$
%P(|Z_n/m^n-W|\geq \ez)
%$$
%is obtained under the assumption $E[e^{\eta Z_1}]<\infty$ for some $\eta>0$. In fact, by Theorem 2 on page 55 of \cite{AN72}, one have that
%$$
%Z_n/m^n-W\overset{d}{=}\frac{1}{m^n}\sum_{i=1}^{Z_n}(W^{(i)}-1),
%$$
%where $W^{(i)}, i\geq1$ are i.i.d. and $W^{(1)}$ has the same distribution as $W$. Then taking $X_1\overset{d}{=}W-1$ gives
%$$S_{Z_n}/m^n\overset{d}{=}Z_n/m^n-W.$$
%In return, our results in this paper gives a precise asymptotic behavior of $
%P(|Z_n/m^n-W|\geq \ez)
%$ as $n\rar\infty$, under the assumption $P(Z_1>x)\sim {L(x)}{x^{-\bz}}.$

For proofs, we shall use the strategy of \cite{FW08}. However, our arguments are deeply involved because of the lack of high moments and the perturbations of slowly varying functions. We overcome those difficulties by using Fuk-Nagaev's inequalities, estimation of growth of random walks, large deviation probabilities for sums under
sub-exponentiallity and establishing the asymptotic properties of
\beqlb\label{NegaI}
E[Z_n^{-t}L(\ez_n Z_n)],\quad t>0, \quad \text{as }n\rar\infty.
\eeqlb

 In the next section, Section \ref{subsecBA}, we will give our basic assumptions on $Z$ and $X$. Our main results will be presented in Section \ref{subsecMR}. We prove
 Fuk-Nagaev's inequalities and establish the asymptotic properties of (\ref{NegaI}) in Section \ref{SecPre}. The proofs of main results will be given in Section \ref{SecPro}. With $C, c$, etc., we denote
positive constants which might change from line to line.

%In the sequel of this paper, we always work on the event $$\{Z_n\neq 0 \}.$$

%\section{Introduction}\label{sec:introduction}
\subsection{Basic Assumptions}\label{subsecBA}

%\subsection{xx }%$X_n\overset{d}{\longrightarrow}X$}
Define $F(x)= P(X_1\leq x).$ We  make the following assumption:

\noindent {\bf Assumption A:} %As $x\rar+\infty$
\begin{itemize}

\item $P(X_1\geq x)\sim x^{-\beta}L(x)$, where $\beta>0$ and $L$ is a slowly varying function;

\item If $\ez_n\rar0$, we assume that $L$ is bounded away from $0$ and $\infty$ on every compact subset of $[0, \infty)$.

\item $X_1$ is in the domain of attraction of an $\az$-stable law with $0<\az<2$; %$F(-x)=P(X_1\leq -x)\sim x^{-\alpha}R(x)$, $R$ is a slowly varying function;

\item $E[X_1]=0$ if $1<\az<2$;

\item $E[Z_1\log Z_1]<\infty;$

\item$ p_0=0, 0<p_1<1.$
\end{itemize}
From the Assumption, it is easy to see that $\az\leq \bz.$ The last term in the Assumption means that we are in the Schr\"oder case. In fact, we only need to assume $0<p_0+p_1<1.$
\begin{remark} The second term in the Assumption is technical. In fact, by  Theorem 1.5.6 in \cite{BGT89} for any $\eta>0$ and $a>0$,  there
exist two positive constants $C_{\eta}$ such that, for any
$y>a,\, z>a$,
 \bgeqn
 \label{regular}
\frac{L(z)}{L(y)}\leq
C_{\eta}\max\l(\left(\dfrac{z}{y}\right)^{\eta},\left(\dfrac{z}{y}\right)^{-\eta}\r).
 \edeqn
 And if $L$ is bounded away from $0$ and $\infty$ on every compact subset of $[0, \infty)$, then (\ref{regular}) holds for any $y>0, z>0$.
 \end{remark}
% Furthermore, the monotonicity of $L$ gives
%\beqlb\label{boundL}
%\frac{L(x)+1}{L(x)}<\infty.
%\eeqlb
\begin{remark}
\label{remark1.1} Under Assumption A we have that there exists a function $b(k)$ of regular variation of index
$1/\az$ such that
\beqlb\label{bstable}
b(k)^{-1}S_k\overset{d}{\rar}U_{s},
\eeqlb
where $U_{s}$ is an $\az$-stable random variable; see \cite{[F71]} and \cite{ST94}. Without loss of generality, we may and will assume
that function $b$ is continuous and monotonically increasing from
$\mbb R^{+}$ onto $\mbb R^{+}$ and $b(0)=0$; see
\cite{[F71]}. We also have that
$$
b(x)=x^{1/\az}s(x),\quad x>0,
$$
where $s:(0,\infty)\rightarrow(0,\infty)$ is a slowly varying
function. Then (\ref{regular}) also holds for $s$ with $y\geq1, z\geq1$.%%, meaning that for any $c>0$,
%$$
%\lim_{x\rightarrow\infty}\frac{s(cx)}{s(x)}=1
%$$
%where the convergence holds uniformly when $c$ varies over the
%interval $[\ez, 1/\ez]$ for any $\ez>0$.
%Furthermore, if $1<\az<2$, then $xb(x)^{-1}$ is increasing in $x$.
\end{remark}

Define \beqlb\label{mu12}\mu(1;x)=\int_{-x}^xyF(dy),\quad\mu(2;x)=\int_{-x}^xy^2F(dy).\eeqlb
 Under Assumption A,  by arguments in \cite{[F71]}, we have  as $x\rar+\infty$,
\beqlb\label{ppp}
\frac{1-F(x)}{1-F(x)+F(-x)}\rar p_+,\quad \frac{F(-x)}{1-F(x)+F(-x)}\rar p_-,\quad  p_++p_-=1
\eeqlb
and
\beqlb\label{muasy}
\frac{x^2[1-F(x)+F(-x)]}{\mu(2;x)}\rar \frac{2-\az}{\az},\qquad\mu(2;x)\sim \begin{cases}\frac{\az}{2-\az}x^{2-\az}R(x),& \text{if } p_+=0;\\ \frac{\bz p_+}{2-\bz}x^{2-\bz}L(x), &\text{if } 0<p_+<1;\\ \frac{\bz}{2-\bz}x^{2-\bz}L(x), &\text{if } p_+=1,\end{cases}
\eeqlb
where $R$ is a slowly varying function.
Furthermore, the function $b$ in (\ref{bstable}) must satisfy:
%$$
%\frac{k\mu(b(k))}{b(k)^2}\rar C
%$$
%and
as $x\rar+\infty$,
\beqlb\label{LB}
x[1-F(b(x))]\rar C{p_+}\frac{2-\az}{\az},\quad
xF(-b(x))\rar C{p_-}\frac{2-\az}{\az};
\eeqlb
see (5.25) in \cite{[F71]}. In particular, it is implied in above that
if $p_+=0$, then $F(-x)\sim x^{-\az}R(x)$ as $x\rar+\infty.$
Then for some technically  reasons, we also need to make the following assumptions.

\bigskip

\noindent {\bf Assumption B:}
\begin{itemize}
\item $U_{s}$ is  strictly stable;

\item If $1<\az<2$, we assume that $\liminf_{x\rar+\infty}s(x)\in (0, +\infty] $;

\item If $0<p_+<1$ and $\az=1$, we assume that $\mu(1;x)=0$ for all $x>0$;

\item If $p_+=0$, we assume $\az<\bz$;

\item If $1<\az<2$ and $p_+>0$, we assume
$$
\limsup_{n\rar+\infty}\frac{F(-b(n)/[\log n]^{1/\az})}{(\log n)F(-b(n))}\leq 1.
$$
\end{itemize}
\begin{remark}  The assumption that $U_s$ is strictly stable implies that, when $\az=1$, we must have $\az=\bz$ and {\it the skewness parameter}  of $U_{s}$  is $0$.
The 2nd term in Assumption B will be used to deduce (\ref{liminfS2}) which is required in Lemma \ref{Na4aa}. The 3rd term is used in {\it Step 2} in Lemma \ref{Na4} to find a good upper bound for $P(x)$, which appears in Theorem 1.2 in \cite{Na79}. The last two terms are required in Theorems 9.2 and 9.3 in \cite{DDS08}, which are needed in our proofs.
\end{remark}
\begin{center}
{\it From now on,   Assumptions A and B are in force.}
\end{center}

\subsection{Main Results}\label{subsecMR}
Before presenting the main results, we first introduce some notation. Recall $b(x)$ from (\ref{bstable}). Define $J(x)=xb(x)^{-1}$ and
$$l(x)=\inf\{y\in [0,\infty): J(y)>x\}.$$
According to Theorem 1.5.12 in \cite{BGT89}, $l(x)$ is an asymptotic inverse  of $J$; i.e.; $$ \quad l(J(x))\sim J(l(x))\sim x, \text{ as } x\rar+\infty.$$ Define $l(\ez_n^{-1})=l_n$.
Note that $l$ is also regular varying function with index $\frac{\az-1}{\az}.$  Denote by $f(s)$ the generating function of our offspring law.  Define $\gamma$ (Schr\"oder constant) by
$$
f'(0)=m^{-\gamma}=p_1.
$$ For $1<\az<2$ and $\az<\bz$, let
 $$
\chi_n:=\frac{l_n^{\gz-\bz}m^{(\bz-1-\gz)n} b(l_n)^{\bz}}{L(l_n^{-1} b(l_n)m^n)}=\frac{b(l_n)^{\gz}}{(\ez_nm^n)^{\gz-\bz}L(\ez_nm^n)m^n}.%\rar 0 %\Leftrightarrow l(\ez_n^{-1})^{\gz}m^{-\gz %n}=o(\ez_n^{-\bz}m^{(1-\bz)n}L(\ez_nm^n) ).
$$ For $0\leq t<\gz+1$, define
\beqlb\label{Ibz}
I_{t}=
%\begin{cases}
\int_0^{\infty}u^{1-t}\omega(u)du.%, &;\\
%%%1-q,& \beta=1;\\
% \frac{1}{\Gamma(t-1)}\int_0^{\infty}E[e^{-sW}]s^{t-2}ds,& t\geq \gz+1.
%\end{cases}
\eeqlb
%\newpage
\begin{remark}
As $u\rar0+$, there exist constants $0<C_1<C_2<\infty$ such that
\beqlb\label{omega}
C_1<\frac{\omega(u)}{u^{\gamma-1}}<C_2.
\eeqlb
%The above result cannot be improved into more precise asymptotic
%and the oscillation is very small;
See \cite{D71}, \cite{CLR14} and references therein for related results.  So the assumption $E[Z_1\log Z_1]<\infty$, together with (\ref{omega}), implies that $I_t$ is finite; see Theorem 8.12.7 in \cite{BGT89}.
\end{remark}
We are ready to present our main results.
%\subsubsection{The case of $S_{Z_n}/Z_n$}
As illustrated in \cite{NV03}, there is a ``phase transition'' in rates depending on $\gz$. Thus we will have three different cases in regard to $\gz$ and $\bz$. We  first consider the case of $\gz>\bz-1$.
\begin{theorem}\label{main2}  Let $0<\az<1$. Assume that  $\ez_n m^n b(m^n)^{-1}\rar +\infty$ and $\ez_n\rar+\infty$ as $n\rar\infty$.
 If $\gamma>\beta-1$,
then
\beqlb\label{main2a}
\lim_{n\rar\infty}m^{(\bz-1)n}\ez_n^{\beta}L(\ez_nm^n)^{-1}P( S_{Z_n}/Z_n\geq \ez_n )=I_{\bz}.
\eeqlb

%\item If  $\ez_n/ m^{n(1-\az)/\az}\rar\tau\in(0,\infty),$ then $$
%\lim_{n\rar\infty}P( S_{Z_n}/Z_n> \ez_n )=\int_0^{\infty}P\l(U_{\az}\geq u^{\frac{\az-1}{\az}}\tau\r) %\omega(u)du.$$

%\end{enumerate}
\end{theorem}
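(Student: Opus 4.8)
The plan is to decompose the event $\{S_{Z_n}/Z_n \ge \ez_n\}$ according to the value of $Z_n$, using the fact that $Z_n/m^n \to W$ almost surely with $W$ having density $\omega$, and then to identify which scale of $Z_n$ dominates the probability. Write $P(S_{Z_n}/Z_n \ge \ez_n) = \sum_{k} P(Z_n = k)\, P(S_k \ge k\ez_n)$. Since $\ez_n \to +\infty$ but $\ez_n m^n / b(m^n) \to +\infty$ as well, and $X_1$ is attracted to an $\az$-stable law with $0<\az<1$, the single-big-jump heuristic applies: for $k$ of order $m^n$ one has $P(S_k \ge k\ez_n) \sim k\,P(X_1 \ge k\ez_n) \sim k (k\ez_n)^{-\bz} L(k\ez_n)$. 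This is the regime where large deviations of the random walk are governed by one summand being large, which is exactly why the condition $\ez_n \to \infty$ (rather than $\ez_n \to 0$) is imposed and why Assumption A only needs $L$ controlled near a neighbourhood of infinity here. Summing against $P(Z_n=k)$ and replacing $k$ by $Wm^n$ heuristically gives $m^n \cdot (Wm^n)^{1-\bz}\ez_n^{-\bz} L(\ez_n m^n) \cdot [\text{slowly varying corrections}]$, whose expectation over $W$ produces the constant $I_\bz = \int_0^\infty u^{1-\bz}\omega(u)\,du$, which is finite precisely because $\gz > \bz - 1$ controls the behaviour of $\omega(u) \asymp u^{\gz-1}$ near $0$ (Remark on \eqref{omega} and Theorem 8.12.7 in \cite{BGT89}).

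Concretely I would split the sum at thresholds like $k \le \delta m^n$, $\delta m^n < k \le K m^n$, and $k > K m^n$ for small $\delta$ and large $K$, plus possibly a very-small-$k$ piece $k \le A$. For the bulk range $\delta m^n < k \le K m^n$: use a uniform (in $k$ in this range) version of the single-big-jump asymptotics $P(S_k \ge k\ez_n) \sim k P(X_1 \ge k\ez_n)$, valid since $k\ez_n / b(k) \ge \delta m^n \ez_n / b(Km^n) \to \infty$ uses the hypothesis; then use the local/uniform convergence of $m^n P(Z_n = \lfloor u m^n\rfloor)$ toward $\omega(u)$ (Schröder case, $E[Z_1\log Z_1]<\infty$) together with dominated convergence and the Potter-bounds \eqref{regular} for $L$ to pass to the limit, obtaining $\int_\delta^K u^{1-\bz}\omega(u)\,du$ after normalising by $m^{(\bz-1)n}\ez_n^\bz L(\ez_n m^n)^{-1}$. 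For the tail range $k > K m^n$: bound $P(S_k \ge k\ez_n) \le 1$ trivially or by a crude Fuk-Nagaev estimate, and use that $\sum_{k > Km^n} k^{1-\bz} m^{-n} \cdot (\text{law of } Z_n/m^n)$ is controlled by $E[W^{1-\bz}\ind_{W>K}]$ which is small; here one needs harmonic-type moment bounds on $Z_n$ uniform in $n$ — these should follow from the earlier machinery (the asymptotic analysis of $E[Z_n^{-t}L(\ez_n Z_n)]$ in \eqref{NegaI}, or at least $\sup_n E[(Z_n/m^n)^{-t}] < \infty$ for $t < \gz$). For the small range $k \le \delta m^n$ (and the genuinely-small $k\le A$ piece): here $P(S_k \ge k\ez_n)$ is itself tiny because we still have $k\ez_n \to \infty$, so $P(S_k \ge k\ez_n) \le C k (k\ez_n)^{-\bz}L$ plus a negligible multi-jump correction, and summing gives something like $E[W^{1-\bz}\ind_{W \le \delta}]$ times the normalising constant, which $\to 0$ as $\delta \to 0$ by $I_\bz < \infty$; the $k\le A$ part needs a separate direct estimate of $P(S_k \ge k\ez_n)$ for fixed small $k$, but $P(Z_n \le A)$ decays like $p_1^n$ and is dominated.

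The main obstacle I expect is the uniformity of the single-big-jump asymptotics across the whole relevant range of $k$ simultaneously with the slowly varying perturbation: one needs $P(S_k \ge k\ez_n) = k P(X_1 \ge k\ez_n)(1+o(1))$ with the $o(1)$ uniform over $k \in (\delta m^n, K m^n)$ as $n\to\infty$, and one must feed in $L(k\ez_n)/L(\ez_n m^n) = L(u \ez_n m^n)/L(\ez_n m^n) \to 1$ uniformly for $u \in [\delta, K]$ — the latter is Potter's theorem \eqref{regular}, but combining it with the non-uniformity inherent in stable-domain large deviations requires care (this is where the cited sub-exponential large-deviation results, e.g. \cite{DDS08} Theorems 9.2--9.3 and the Fuk-Nagaev inequalities, do the heavy lifting). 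The secondary obstacle is justifying the interchange of limit and sum on the unbounded tail $k > Km^n$, which is where the harmonic moment estimates and the asymptotics of \eqref{NegaI} from Section \ref{SecPre} are essential; without a uniform-in-$n$ bound on $E[(Z_n/m^n)^{1-\bz}\ind_{Z_n/m^n > K}]$ the argument does not close. Once both uniformities are in hand, letting $\delta \downarrow 0$ and $K \uparrow \infty$ and invoking $I_\bz < \infty$ finishes the proof.
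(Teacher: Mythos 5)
Your plan follows the paper's own route almost exactly: decompose $P(S_{Z_n}/Z_n\ge\ez_n)=\sum_k P(Z_n=k)P(S_k\ge k\ez_n)$, handle the bulk $\dz m^n\le k\le A m^n$ via uniform big-jump asymptotics (the paper cites Theorem~9.3 of \cite{DDS08} for $\az<\bz$ and Theorem~3.3 of \cite{CH98} for $\az=\bz$) combined with Potter bounds and the harmonic-moment estimate \reff{Nega0}, control the two tails with Fuk--Nagaev plus the local bound \reff{local}, and let $\dz\downarrow0$, $A\uparrow\infty$ --- this is precisely Lemmas~\ref{na5} and~\ref{na5b}. One caveat worth noting: the ``bound $P(S_k\ge k\ez_n)\le1$ trivially'' alternative you offer for $k>Km^n$ does not close, since $\sum_{k>Km^n}P(Z_n=k)\to P(W>K)$, which after multiplication by the normalizer $m^{(\bz-1)n}\ez_n^{\bz}/L(\ez_nm^n)\to\infty$ diverges; you must use the single-jump/Fuk--Nagaev upper bound there to generate the cancelling factor $\ez_n^{-\bz}m^{(1-\bz)n}L(\ez_nm^n)$, which is exactly what the paper does in \reff{na52}.
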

\begin{theorem}\label{main3} Let  $1\leq\az<2$. Assume that $\ez_nm^nb(m^n)^{-1}\rar +\infty$ as $n\rar\infty$ and $\gz>\bz-1$. % and  $E[X_1]=0$ if it exists.
 \begin{enumerate}
\item[(i)]   Assume $1<\az<2,$ $p_+=0$ and $\ez_n\rar0$. If\, $\lim_{n\rar\infty}\chi_n=0 $,
then (\ref{main2a}) holds.
%$$
%\lim_{n\rar\infty}m^{(\bz-1)n}\ez_n^{\bz}L(\ez_nm^n)^{-1}P( S_{Z_n}/Z_n> \ez_n )=I_{\bz}.
%$$

\item[(ii)]  Assume $1<\az<2,$ $p_+=0$ and $\ez_n\rar0$. If\, $\lim_{n\rar\infty}\chi_n= \infty $, then
   \beqlb\label{main32c}
  V_I\ar\leq\ar\varliminf_{n\rar\infty}l_n^{-\gz}m^{\gz n}P( S_{Z_n}/Z_n\geq  \ez_n )\cr\ar\leq\ar \varlimsup_{n\rar\infty}l_n^{-\gz}m^{\gz n}P( S_{Z_n}/Z_n\geq  \ez_n )\leq V_S,
 \eeqlb
  where
  \beqnn
  V_I\ar=\ar\varliminf_{u\downarrow0}u^{1-\gz}\omega(u)\int_0^{\infty}u^{\gz-1}P(U_{s}\geq u^{\frac{\az-1}{\az}})du,\cr
  V_S\ar=\ar \varlimsup_{u\downarrow0}u^{1-\gz}\omega(u)\int_0^{\infty}u^{\gz-1}P(U_{s}\geq u^{\frac{\az-1}{\az}})du.
  \eeqnn

 \item[(iii)]  Assume $1<\az<2,$ $p_+=0$ and $\ez_n\rar0$. If\, $\lim_{n\rar\infty}\chi_n= y\in(0,\infty)$, then
  \beqnn
  V_I+yI_{\bz}
  \ar\leq\ar\varliminf_{n\rar\infty}l_n^{-\gz}m^{\gz n}P( S_{Z_n}/Z_n\geq  \ez_n )\cr\ar\leq\ar \varlimsup_{n\rar\infty}l_n^{-\gz}m^{\gz n}P( S_{Z_n}/Z_n\geq  \ez_n )\leq V_S+yI_{\bz},
 \eeqnn

\item[(iv)] Assume  $p_+>0$ and $\ez_n\rar\ez\in (0, \infty)$.
Then (\ref{main2a}) holds.
%$$
%\lim_{n\rar\infty}m^{(\bz-1)n}\ez_n^{\beta}L(\ez_nm^n)^{-1}P( S_{Z_n}/Z_n> \ez_n )=I_{\bz}.
%$$
 \end{enumerate}
\end{theorem}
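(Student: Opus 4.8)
The plan is to condition on $Z_n$: with $g_n(k):=P(S_k\ge k\ez_n)$, every quantity in (i)--(iv) equals $\mE[g_n(Z_n)]=\sum_{k\ge1}P(Z_n=k)\,g_n(k)$, so the whole theorem reduces to (a) two-sided estimates on $g_n(k)$ that are sharp and uniform over the range of $k$ carrying the mass of $Z_n$, and (b) the asymptotics of the weighted harmonic moments of $Z_n$ established in Section~\ref{SecPre}, in particular those of $\mE[Z_n^{-t}L(\ez_n Z_n)]$ in \reff{NegaI}, combined with $Z_n/m^n\overset{a.s.}{\rar}W$ and $\omega(u)\asymp u^{\gz-1}$ as $u\downarrow0$. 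The shape of $g_n(k)$ is governed by where the threshold $k\ez_n$ sits relative to the natural scale $b(k)$ of $S_k$, i.e.\ by whether $J(k)=k/b(k)$ is much larger or much smaller than $\ez_n^{-1}$, i.e.\ by whether $k\gg l_n$ or $k\ll l_n$ (recall $J(l_n)\sim\ez_n^{-1}$). When $k\gg l_n$ the one-big-jump principle applies, $g_n(k)\sim k\,P(X_1\ge k\ez_n)=k^{1-\bz}\ez_n^{-\bz}L(k\ez_n)$; the upper bound comes from the Fuk--Nagaev inequalities of Section~\ref{SecPre} together with the precise large-deviation theorems of Nagaev~\cite{Na79} and Denisov--Dieker--Shneer~\cite{DDS08} and the large-deviation probabilities for sums under sub-exponentiality (this is where the last two items of Assumption~B and Lemmas~\ref{Na4aa}, \ref{Na4} enter), and the lower bound from the event that one summand exceeds $k\ez_n(1+o(1))$ while the rest of the walk does not upset the sum, estimated through the growth of the random walk $(S_j)_{j\le k}$. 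When $k\asymp l_n$ the stable scaling $b(k)^{-1}S_k\overset{d}{\rar}U_s$ takes over, $g_n(k)\approx P\big(U_s\ge k\ez_n/b(k)\big)$, and since $b(vl_n)\sim v^{1/\az}b(l_n)$ and $l_n\ez_n\sim b(l_n)$ one gets $k\ez_n/b(k)\sim v^{(\az-1)/\az}$ for $k=vl_n$, so $g_n(k)$ is of order one there; the super-exponential thinness of the right tail of $U_s$ when $p_+=0$ makes this contribution negligible for $k\gg l_n$, while $l_n\,P(X_1\ge b(l_n))\rar0$ (from \reff{LB} with $p_+=0$) makes the big-jump contribution negligible for $k\lesssim l_n$.

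Inserting these estimates into $\mE[g_n(Z_n)]$ splits it into a big-jump part and a central part. The big-jump part equals $\ez_n^{-\bz}\mE[Z_n^{1-\bz}L(\ez_n Z_n)]$, which by the asymptotics of \reff{NegaI} with $t=\bz-1$ (the standing hypothesis $\gz>\bz-1$ gives $t<\gz$, hence $\mE[W^{1-\bz}]=I_\bz<\infty$ and the small-$Z_n$ range only feeds the error term) is $\sim\ez_n^{-\bz}m^{(1-\bz)n}L(\ez_n m^n)\,I_\bz$; the slowly varying perturbation $L(\ez_n Z_n)/L(\ez_n m^n)$ is absorbed by the Potter-type bound \reff{regular}, valid for all $y,z>0$ here because $\ez_n\rar0$ activates the second item of Assumption~A. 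The central part, after using $P(Z_n=k)\approx m^{-n}\omega(km^{-n})$, the substitution $k=vl_n$ and a change of variables, equals up to $1+o(1)$ the quantity $l_n^{\gz}m^{-\gz n}\int_0^\infty v^{\gz-1}\,h(vl_nm^{-n})\,P\big(U_s\ge v^{(\az-1)/\az}\big)\,dv$ with $h(u):=u^{1-\gz}\omega(u)$; since $l_n/m^n\rar0$ (a consequence of $\ez_nm^nb(m^n)^{-1}\rar\infty$ and the regular variation of $J$), and $h$ stays in a fixed compact subinterval of $(0,\infty)$ as $u\downarrow0$ by \reff{omega}, Fatou's lemma in both directions (using that $v\mapsto v^{\gz-1}P(U_s\ge v^{(\az-1)/\az})$ is integrable) shows that $l_n^{-\gz}m^{\gz n}$ times the central part has liminf at least $V_I$ and limsup at most $V_S$. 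The ratio of the central part to the big-jump part is asymptotic to $\chi_n$, which drives the trichotomy: $\chi_n\rar0$ makes the central part negligible, so the normalization of \reff{main2a} yields $I_\bz$, which is (i); $\chi_n\rar\infty$ makes the big-jump part negligible, so the normalization $l_n^{-\gz}m^{\gz n}$ yields the sandwich \reff{main32c}, which is (ii); $\chi_n\rar y\in(0,\infty)$ keeps both, and under $l_n^{-\gz}m^{\gz n}$ the central part contributes $[V_I,V_S]$ while the big-jump part contributes the additional $y$-dependent term of (iii). For (iv), $\ez_n\rar\ez\in(0,\infty)$ forces $k\ez_n\asymp k\gg b(k)=o(k)$ for every relevant $k$, so only the big-jump mechanism survives and the same computation gives \reff{main2a}.

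The main obstacle --- and the reason the arguments are ``deeply involved'' --- is making the estimate on $g_n(k)$ uniform across the entire range of $k$ that matters: from $k$ of order $1$, where $P(Z_n=k)\asymp p_1^n=m^{-\gz n}$ in the Schr\"oder case, through the delicate crossover $k\asymp l_n$ where neither the one-big-jump asymptotics nor the stable limit is valid and one must interpolate, up to $k\asymp m^n$ and into the right tail of $W$. Controlling the crossover forces the truncation level in the Fuk--Nagaev bound to be tuned to $b(k)$ up to logarithmic corrections, which is exactly why Assumption~B imposes $\mu(1;x)\equiv0$ (when $\az=1$) and $\limsup_n F(-b(n)/(\log n)^{1/\az})/\big((\log n)F(-b(n))\big)\le1$. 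Two further points need care but feed only into the error term: the large-$k$ range, where the Fuk--Nagaev upper bound $g_n(k)\lesssim k^{1-\bz}\ez_n^{-\bz}L(k\ez_n)$ together with \reff{regular} (and $\gz>\bz-1$) shows the tail of $W$ contributes negligibly, and the repeated use of \reff{regular} for both $L$ and the slowly varying function $s$ attached to $b$ to absorb the perturbations of slow variation.
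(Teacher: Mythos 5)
Your strategy matches the paper's proof: both condition on $Z_n$, split $\sum_k P(Z_n=k)P(S_k\ge k\ez_n)$ into a central part at scale $k\asymp l_n$ (where the stable limit governs) and a big-jump part at scale $k\gtrsim m^n$, control the intermediate and extreme ranges by the Fuk--Nagaev bounds of Lemmas~\ref{Na4aa} and~\ref{Na4} together with the harmonic-moment asymptotics of Lemma~\ref{lemIL}, and let $\chi_n$ --- asymptotically the ratio of the central to the big-jump contribution --- drive the trichotomy. One small inaccuracy to note: at $k\asymp l_n$ the stable limit \emph{is} valid and yields the central-part asymptotics (the lemma containing \reff{main33b}); the truly delicate interpolation zone is $l_n\ll k\ll m^n$, where neither the stable approximation nor the precise big-jump asymptotics is sharp and one must fall back on Fuk--Nagaev upper bounds as in \reff{Na47}.
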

\begin{remark}
The assumption $p_+=0$ implies that $U_{s}$ is a spectrally negative $\az$-stable random variable with mean $0$ and skewness parameter $-1$. By (1.2.11) in \cite{ST94}, we have  $$\int_0^{\infty}u^{\gz-1}P(U_{s}\geq u^{\frac{\az-1}{\az}})du<\infty.$$
\end{remark}
%As
%Thus  the r.h.s. in both (\ref{mainx1}) and (\ref{corox}) are finite.

As an application of (iv) in above theorem by taking $\ez_n=\ez$, we  immediately get the following result, which improves the corresponding result in Theorem 3 in \cite{NV03}, where it is assumed that $L$ is a constant function.

\begin{corollary}  If $P(Z_1>x)\sim
x^{-\beta}L(x)$ for  $1<\beta<2$ and $\gz>\bz-1$, then
\beqlb\label{A941}\lim_{n\rar\infty} m^{(\bz-1)n}L(m^n)^{-1}P\l(\frac{Z_{n+1}}{Z_n}-m\geq\ez\r)=I_{\bz}\ez^{-\beta}.
\eeqlb
\end{corollary}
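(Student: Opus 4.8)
The plan is to derive the Corollary directly from part (iv) of Theorem \ref{main3} by specializing to the case where $X_1 \overset{d}{=} Z_1 - m$ and $\ez_n \equiv \ez$ is constant. First I would check that the hypotheses of Theorem \ref{main3}(iv) are met under the assumption $P(Z_1 > x) \sim x^{-\bz} L(x)$ with $1 < \bz < 2$: the right tail $P(X_1 \geq x) = P(Z_1 - m \geq x) = P(Z_1 > x + m) \sim x^{-\bz} L(x)$ (using that $L$ is slowly varying so $L(x+m)/L(x) \to 1$), so $X_1$ has a regularly varying right tail of index $\bz$; since $Z_1 \geq 1$ a.s., the left tail is trivial and $p_+ = 1 > 0$, and hence $\az = \bz \in (1,2)$, $X_1$ is in the domain of attraction of a $\bz$-stable law, and $E[X_1] = E[Z_1] - m = 0$. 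The moment hypothesis $E[Z_1 \log Z_1] < \infty$ is implied by $\bz > 1$ (indeed $E[Z_1^{\bz - \delta}] < \infty$ for small $\delta > 0$), and the Schr\"oder condition $0 < p_1 < 1$ plus $p_0 = 0$ is standing. One also needs to confirm that the remaining items of Assumption B reduce to innocuous conditions here (strict stability of $U_s$, and for $p_+ > 0$, $1 < \az < 2$ the $\limsup$ condition, which holds vacuously since the left tail of $X_1$ vanishes identically).

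The second step is purely bookkeeping with the distributional identity
\[
\frac{S_{Z_n}}{Z_n} \overset{d}{=} \frac{Z_{n+1}}{Z_n} - m,
\]
valid because $X_1 \overset{d}{=} Z_1 - m$. This identity is already recorded in the introduction. Applying it to the left-hand side of \reff{main2a} with $\ez_n = \ez$ gives
\[
P\!\left(\frac{S_{Z_n}}{Z_n} \geq \ez\right) = P\!\left(\frac{Z_{n+1}}{Z_n} - m \geq \ez\right),
\]
and substituting $\ez_n = \ez$ into the normalizing factor $m^{(\bz-1)n} \ez_n^{\bz} L(\ez_n m^n)^{-1}$ from \reff{main2a} produces $m^{(\bz-1)n} \ez^{\bz} L(\ez m^n)^{-1}$. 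Since $L$ is slowly varying, $L(\ez m^n)/L(m^n) \to 1$ as $n \to \infty$, so we may replace $L(\ez m^n)$ by $L(m^n)$ in the limit at the cost of a factor tending to $1$. Rearranging, Theorem \ref{main3}(iv) yields
\[
\lim_{n \to \infty} m^{(\bz-1)n} L(m^n)^{-1} \ez^{\bz} \, P\!\left(\frac{Z_{n+1}}{Z_n} - m \geq \ez\right) = I_{\bz},
\]
which is \reff{A941} after dividing both sides by $\ez^{\bz}$.

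I do not expect any serious obstacle here; the Corollary is essentially a translation of Theorem \ref{main3}(iv) into the language of the Lotka-Nagaev estimator. The one point deserving a line of care is the verification that the tail of $X_1 = Z_1 - m$ inherits the regular variation and the constant $p_+ = 1$ cleanly, together with the observation that the slowly varying function is unaffected (up to asymptotic equivalence) by the shift $x \mapsto x + m$ and by the rescaling $m^n \mapsto \ez m^n$; both are immediate from the uniform convergence theorem for slowly varying functions (Theorem 1.2.1 in \cite{BGT89}). The finiteness of $I_{\bz}$ — needed for the right-hand side to make sense — follows from $\gz > \bz - 1$ together with \reff{omega}, exactly as noted in the remark preceding the main results. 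The statement that this "improves Theorem 3 in \cite{NV03}" is then simply the remark that \cite{NV03} assumed $L$ constant (i.e. $P(Z_1 > x) \sim a x^{-\bz}$), whereas the present result allows an arbitrary slowly varying $L$.
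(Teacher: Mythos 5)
Your proposal is correct and follows the same route as the paper: the paper's entire proof is the single line that Theorem 1.4(iv) implies \reff{A941}, applied exactly via the substitution $X_1 \overset{d}{=} Z_1 - m$, $\ez_n \equiv \ez$, and the slow-variation facts $L(x+m)/L(x)\to 1$ and $L(\ez m^n)/L(m^n)\to 1$. You have merely spelled out the hypothesis verification that the paper leaves implicit, and your checks are accurate.
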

\begin{remark}\label{A94RE}
In fact, by (\ref{Na46}) below, one may prove that
%\beqnn
%P\l(m-\frac{Z_{n+1}}{Z_n}\geq\ez\r)\leq Cm^{-\gz n}.
%\eeqnn
 %Then $\gz>\bz-1$ implies
$$\lim_{n\rar\infty} m^{(\bz-1)n}L(m^n)^{-1}P\l(m-\frac{Z_{n+1}}{Z_n}\geq\ez\r)=0.$$
\begin{proof}
(iv) in Theorem \ref{main3} implies (\ref{A941}).
\end{proof}

Next, we consider the case of $\gz=\bz-1$. Let $d$ be the greatest common divisor of the set $\{j-i: i\neq j , p_jp_i>0\}.$
\begin{theorem}\label{Cmain2} Suppose  $0<\az<1$ and $\bz>1$. Assume that  $\ez_n m^n b(m^n)^{-1}\rar +\infty$ and $\ez_n\rar+\infty$ as $n\rar\infty$.
 If $\gamma=\beta-1$,
then
\beqlb\label{Cmain2a}
d\liminf_{u\downarrow0}u^{1-\gz}\omega(u)\ar\leq \ar \liminf_{n\rar\infty}\frac{\ez_n^{\bz}P(S_{Z_n}/Z_n\geq\ez_n)}{ \sum_{1\leq k\leq m^n}\frac{L(\ez_n k)}{k m^{\gz n}}}\cr\ar\leq\ar\limsup_{n\rar\infty}\frac{\ez_n^{\bz}P(S_{Z_n}/Z_n\geq\ez_n)}{ \sum_{1\leq k\leq m^n}\frac{L(\ez_n k)}{k m^{\gz n}}}\leq d\limsup_{u\downarrow0}u^{1-\gz}\omega(u).
\eeqlb
\end{theorem}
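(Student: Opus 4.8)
\textbf{Proof proposal for Theorem \ref{Cmain2}.}

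The plan is to follow the decomposition strategy of \cite{FW08} but at the critical exponent $\gz=\bz-1$, where the harmonic-moment asymptotics of $Z_n$ produce a logarithmic-type correction instead of a clean power. First I would condition on $W_n:=Z_n/m^n$ and write
\beqnn
P(S_{Z_n}/Z_n\geq\ez_n)=\sum_{k\geq1}P(Z_n=k)P(S_k/k\geq\ez_n).
\eeqnn
Since $0<\az<1$ and $\ez_n\to+\infty$, the event $\{S_k/k\geq\ez_n\}$ is a genuine large-deviation event for heavy-tailed sums; by the standard sub-exponential / one-big-jump heuristic (made rigorous via the Fuk--Nagaev inequalities proved in Section \ref{SecPre} and the large-deviation results for sums under sub-exponentiality cited there), $P(S_k/k\geq\ez_n)\sim k\,P(X_1\geq k\ez_n)\sim k(\ez_n k)^{-\bz}L(\ez_n k)=\ez_n^{-\bz}k^{1-\bz}L(\ez_n k)$, uniformly in the relevant range of $k$. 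Plugging this in, the sum becomes $\ez_n^{-\bz}\,E[Z_n^{1-\bz}L(\ez_n Z_n)]$ up to negligible corrections, so the whole problem reduces to the asymptotics of the quantity (\ref{NegaI}) with $t=\bz-1=\gz$, namely $E[Z_n^{-\gz}L(\ez_n Z_n)]$.

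Next I would analyze $E[Z_n^{-\gz}L(\ez_n Z_n)]$ at the critical index. Writing $Z_n=m^nW_n$ and using that $W_n\to W$ with density $\omega$, one has heuristically $E[Z_n^{-\gz}L(\ez_n Z_n)]\approx m^{-\gz n}E[W_n^{-\gz}L(\ez_n m^n W_n)]$. The obstruction is that $\omega(u)\asymp u^{\gz-1}$ as $u\downarrow0$ (equation (\ref{omega})), so the integral $\int_0^\infty u^{-\gz}L(\ez_n m^n u)\omega(u)\,du$ behaves like $\int_{0+}u^{-1}L(\ez_n m^n u)\,du$ near zero, which diverges logarithmically when $L$ is constant; the cutoff is supplied by the discreteness of $Z_n$, i.e. $Z_n\geq1$, equivalently $W_n\geq m^{-n}$. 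Making this precise requires the local behavior of the distribution of $Z_n$ near its minimal value $1$: since $p_0=0$, $P(Z_n=k)$ for small $k$ is governed by the Schröder asymptotics, and in particular the lattice span $d$ enters through $P(Z_n=k)\sim (\text{const})\cdot k^{\gz-1}$ along the appropriate sublattice, with the $d$-dependence exactly producing the factor $d$ and the $\liminf/\limsup$ of $u^{1-\gz}\omega(u)$ in (\ref{Cmain2a}). Concretely I expect to show
\beqnn
\frac{\ez_n^\bz\,E[Z_n^{-\gz}L(\ez_n Z_n)]}{\sum_{1\leq k\leq m^n}\frac{L(\ez_n k)}{k\,m^{\gz n}}}
\eeqnn
is asymptotically squeezed between $d\liminf_{u\downarrow0}u^{1-\gz}\omega(u)$ and $d\limsup_{u\downarrow0}u^{1-\gz}\omega(u)$, by splitting the sum/expectation into a ``small $k$'' part (where $P(Z_n=k)$ is replaced by its Schröder asymptotics and $L(\ez_n k)$ is slowly varying) and a ``large $k$'' part (where $W_n\approx W$ and the contribution is $O(m^{-\gz n})$, hence subsumed in the normalizing sum since $\sum_{k\leq m^n}L(\ez_n k)/k\to\infty$).

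The main obstacle will be the uniformity in the first step — transferring the single-$k$ large-deviation equivalence $P(S_k/k\geq\ez_n)\sim k\,\ez_n^{-\bz}k^{-\bz}L(\ez_n k)$ to a statement valid \emph{simultaneously} for all $k$ from $1$ up to order $m^n$ (and controlling the tail $k>m^n$, which is where $W_n$ is not small and the sum over $P(Z_n=k)$ contributes the $W$-bulk term that must be shown negligible relative to the diverging normalization). This is precisely where the Fuk--Nagaev inequalities and the sub-exponentiality large-deviation bounds from Section \ref{SecPre} do the work: for small $k$ one needs a matching upper \emph{and} lower bound on $P(S_k/k\geq\ez_n)$ with error terms summable against $P(Z_n=k)$, and for the moderate and large $k$ one needs the Fuk--Nagaev upper bound to dominate the truncated contributions. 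A secondary technical point is handling the slowly varying $L$ inside the sum: by Remark (\ref{regular}) one controls $L(\ez_n k)/L(\ez_n m^n)$ uniformly, which lets one compare the normalizing sum $\sum_{k\leq m^n}L(\ez_n k)/k$ with $m^{\gz n}E[Z_n^{-\gz}L(\ez_n Z_n)]$ and close the argument. Once the reduction to (\ref{NegaI}) and its critical-index asymptotics are in place, the $\liminf$/$\limsup$ form of (\ref{Cmain2a}) follows because $u^{1-\gz}\omega(u)$ need not converge as $u\downarrow0$, so only its oscillation bounds survive.
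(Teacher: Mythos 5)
Your proposal follows the approach the paper sketches (the proof of Theorem \ref{main2} combined with Lemma \ref{lemIL}, estimate (\ref{Nega})): condition on $Z_n$, use the uniform one-big-jump equivalence $P(S_k\geq k\ez_n)\sim kP(X_1\geq k\ez_n)$ over an intermediate range of $k$ (with Fuk--Nagaev bounds controlling the tails), reduce to the critical-index harmonic moment $E[Z_n^{-\gz}L(\ez_n Z_n)]$, and invoke (\ref{Nega}), where the lattice span $d$ and the $\liminf/\limsup$ of $u^{1-\gz}\omega(u)$ enter through the local limit theorem $P(Z_n=k)=(1+o(1))\,d\,m^{-n}\omega(k/m^n)$ from Corollary 5 of \cite{FW07}, not directly via $P(Z_n=k)\sim c\,k^{\gz-1}$. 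One small slip: the ratio you should squeeze is $E[Z_n^{-\gz}L(\ez_n Z_n)]\big/\sum_{k\leq m^n}\frac{L(\ez_n k)}{km^{\gz n}}$, without the stray $\ez_n^{\bz}$ prefactor, since that factor was already consumed cancelling the $\ez_n^{-\bz}$ from the one-big-jump step.
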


Define $$ \pi_n=\frac{l_n^{\gz}\ez_n^{\bz}}{\sum_{1\leq k\leq m^n}\frac{L(\ez_n k)}{k } }.$$

\begin{theorem}\label{Cmain3} Let   $1<\az<2$. Assume that $\ez_n\rar0$, $\ez_nm^nb(m^n)^{-1}\rar +\infty$.
 \begin{enumerate}
\item[(i)]   Assume $p_+=0$ and $\gz=\bz-1.$ If $\pi_n\rar0 $,
then (\ref{Cmain2a}) holds.
%$$
%\lim_{n\rar\infty}m^{(\bz-1)n}\ez_n^{\bz}L(\ez_nm^n)^{-1}P( S_{Z_n}/Z_n> \ez_n )=I_{\bz}.
%$$

\item[(ii)]  Assume $p_+=0$ and $\gz=\bz-1.$ If $\pi_n\rar +\infty $, then
  (\ref{main32c}) holds.

 \item[(iii)]  Assume $p_+=0$ and $\gz=\bz-1.$ If $\pi_n\rar y\in(0,\infty)$, then
  \beqnn
  V_I+yd\liminf_{u\downarrow0}u^{1-\gz}\omega(u)
  \ar\leq\ar\liminf_{n\rar\infty}l_n^{-\gz}m^{\gz n}P( S_{Z_n}/Z_n\geq \ez_n )\cr\ar\leq\ar \limsup_{n\rar\infty}l_n^{-\gz}m^{\gz n}P( S_{Z_n}/Z_n\geq \ez_n )\leq V_S+yd\limsup_{u\downarrow0}u^{1-\gz}\omega(u),
 \eeqnn

\item[(iv)] Assume  $p_+>0$ and $\gz=\bz-1$.
Then (\ref{Cmain2a}) holds with $\ez_n$ replaced by any $\ez>0$.
%$$
%\lim_{n\rar\infty}m^{(\bz-1)n}\ez_n^{\beta}L(\ez_nm^n)^{-1}P( S_{Z_n}/Z_n> \ez_n )=I_{\bz}.
%$$
 \end{enumerate}
\end{theorem}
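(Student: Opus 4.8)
The plan is to follow the same strategy used for Theorem \ref{main3} (the case $\gz>\bz-1$), but with the crucial change that when $\gz=\bz-1$ the ``main'' contribution is no longer a single clean term $I_{\bz}$ but a sum $\sum_{1\leq k\leq m^n} L(\ez_n k)/k$ of logarithmic size, which forces us to keep track of the lattice structure of the offspring law (hence the appearance of $d$ and of $\liminf/\limsup$ instead of a limit). The starting point is the standard decomposition conditioning on $Z_n$,
\beqlb\label{plan-decomp}
P\l(\frac{S_{Z_n}}{Z_n}\geq\ez_n\r)=\sum_{k\geq1}P(Z_n=k)\,P(S_k\geq k\ez_n),
\eeqlb
and the elementary single-jump heuristic $P(S_k\geq k\ez_n)\approx kP(X_1\geq k\ez_n)\sim k^{1-\bz}\ez_n^{-\bz}L(\ez_n k)$ when $k\ez_n$ is large, together with the fact that for $k$ small compared to $l_n$ (equivalently $k\ez_n$ small, i.e. still in the ``bulk'' regime governed by the $\az$-stable limit) one uses instead the Gaussian-type/stable estimate $P(S_k\geq k\ez_n)\approx P(U_s\geq k\ez_n b(k)^{-1})$. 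First I would make this split rigorous at the threshold $k\asymp l_n$ using the Fuk--Nagaev inequalities and the random-walk growth estimates proved in Section \ref{SecPre}, exactly as in the proof of Theorem \ref{main3}; this reduces the problem to estimating the two sums
$$
\Sigma_n^{\rm bulk}:=\sum_{k\leq l_n}P(Z_n=k)P(S_k\geq k\ez_n),\qquad
\Sigma_n^{\rm tail}:=\sum_{k> l_n}P(Z_n=k)P(S_k\geq k\ez_n).
$$

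For $\Sigma_n^{\rm tail}$ I would substitute the single-jump asymptotics and compare with $\ez_n^{-\bz}E[Z_n^{1-\bz}L(\ez_n Z_n)\ind_{Z_n>l_n}]$, which is precisely the quantity \reff{NegaI} with $t=\bz-1$ whose asymptotics are established in Section \ref{SecPre}. When $\gz=\bz-1$ these asymptotics produce the logarithmic sum $\sum_{1\leq k\leq m^n}L(\ez_n k)/k$ together with the factor $m^{-\gz n}$ coming from the harmonic-moment behavior $P(Z_n=k)\sim$ (const)$\,k^{\gz-1}m^{-\gz n}$ for moderate $k$ (this is where $\reff{omega}$, the density bound $\omega(u)\asymp u^{\gz-1}$, and the lattice constant $d$ enter, via the local limit theorem for $Z_n$). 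For $\Sigma_n^{\rm bulk}$ I would change variables $u=k/m^n$, use $Z_n/m^n\to W$ with density $\omega$, and recognize $\sum_{k\leq l_n}P(Z_n=k)P(U_s\geq k\ez_n b(k)^{-1})$ as a Riemann-sum approximation to $\int_0^\infty \omega(u)P(U_s\geq u^{(\az-1)/\az})\,du$ after rescaling — this yields the $V_I,V_S$ terms. The three cases (i)--(iii) are then just a matter of which of $\Sigma_n^{\rm bulk}$, $\Sigma_n^{\rm tail}$ dominates, the bookkeeping being controlled by the ratio $\pi_n$; case (i) $\pi_n\to0$ means the tail sum is negligible relative to the bulk — wait, one must check the normalization: in \reff{Cmain2a} the normalizer is $\ez_n^{\bz}m^{\gz n}/\sum_{k\leq m^n}L(\ez_n k)/k$, so $\pi_n\to 0$ in fact signals that the bulk term is negligible and \reff{Cmain2a} (the pure tail asymptotics) holds; $\pi_n\to\infty$ signals the bulk dominates and \reff{main32c} holds; $\pi_n\to y\in(0,\infty)$ gives the additive combination. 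Case (iv), with $\ez_n\equiv\ez$ fixed and $p_+>0$, is handled exactly as case (iv) of Theorem \ref{main3}, invoking Theorem 1.2 of \cite{Na79} for the relevant large-deviation estimate of $P(S_k\geq k\ez)$ under sub-exponentiality instead of the stable local estimate.

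The main obstacle is the precise asymptotic evaluation of the logarithmic sum arising from \reff{NegaI} at the critical exponent $t=\bz-1=\gz$: unlike the generic case, where $E[Z_n^{-t}L(\ez_n Z_n)]$ is asymptotically a constant multiple of a single power of $m^n$ times $L(\ez_n m^n)$, here the integral $\int_0^\infty u^{1-t}\omega(u)\,du$ that would give $I_{\bz}$ diverges logarithmically at $0$ (because $\omega(u)\sim cu^{\gz-1}$ and $1-t+\gz-1=0$), so the sum must be cut off at scale $k\asymp 1$ from below and $k\asymp m^n$ from above, and the slowly varying perturbation $L(\ez_n k)$ cannot be pulled out of the sum. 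Controlling this requires combining the local limit theorem for $\{Z_n=k\}$ (with its lattice span $d$) uniformly over the relevant range of $k$ with the uniform convergence theorem for slowly varying functions \reff{regular}, and this is exactly why the theorem yields a $\liminf$--$\limsup$ sandwich with the oscillation constants $\liminf_{u\downarrow0}u^{1-\gz}\omega(u)$ and $\limsup_{u\downarrow0}u^{1-\gz}\omega(u)$ rather than a genuine limit. A secondary technical point, as in \cite{FW08}, is justifying that the contributions of the very large values $k\gg m^n$ and the error in the single-jump approximation are both negligible against the logarithmic main term; here the Fuk--Nagaev bounds must be applied with some care since we only have moments up to order $\bz<2$.
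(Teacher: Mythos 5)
Your proposal follows exactly the route the paper indicates (the paper itself omits the details, noting only that the argument is "similar to Theorem \ref{main3}"): you use the same conditioning decomposition, the same split at $k\asymp l_n$ with the stable local estimate on the bulk and the single-jump asymptotics on the tail, and you correctly identify that the only substantive change at $\gz=\bz-1$ is that Lemma \ref{lemIL}'s \reff{Nega0} degenerates (the integral $\int_0^\infty u^{1-\gz}\omega(u)\,du$ diverges logarithmically at $0$), forcing one to use the $\liminf$--$\limsup$ sandwich \reff{Nega} with the lattice constant $d$, which is exactly where the normalizer $\sum_{k\leq m^n}L(\ez_n k)/(km^{\gz n})$ and the oscillation constants come from. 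Your self-correction on the meaning of $\pi_n$ (ratio of bulk to tail, so $\pi_n\to0$ means the tail dominates and \reff{Cmain2a} holds) is also right, so the proposal matches the paper's intended proof.
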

\begin{remark} If $L$ is a constant function, then (\ref{Cmain2a}) can be replaced by
$$
\lim_{n\rar\infty}n^{-1}\ez_n^{\bz}m^{\gz n}P(S_{Z_n}/Z_n\geq\ez_n)=\frac{1}{\Gamma(\bz-1)}\int_1^{m}Q(E[e^{-v W}])v^{\bz-2}dv,
$$
where  \beqlb\label{Q}Q(s)=\sum_{k=1}q_ks^k=\lim_{n\rar\infty}\frac{f_n(s)}{m^{-\gz n}},\quad 0\leq s<1,\quad q_k=\lim_{n\rar\infty}{P(Z_n=k)}{m^{\gz n}}\eeqlb and $f_n$ denotes the iterates of $f$. See  Proposition 2 in \cite{A94} for $Q(s)$ and $(q_k)_{k\geq1}$. The key is the limiting behavior of $E[Z_n^{-\gz}L(\ez_n Z_n)]$ as $n\rar\infty$; see  Theorem 1 in \cite{NV03} and  Remark \ref{remNN} below in this paper.
\end{remark}

Finally, we consider the case of $\gz<\bz-1$.
\begin{theorem}\label{SCmain3}   If  $1<\az<2$ and $\gz<\bz-1$ or $E[X_1^{1+\gz}1_{\{X_1>0\}}]<\infty$, then for any $\ez>0$,
$$
\lim_{n\rar\infty}m^{\gz n}P(S_{Z_n}/Z_n\geq\ez)= \sum_{k\geq1}q_kP(S_k\geq\ez k).
$$
\end{theorem}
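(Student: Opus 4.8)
The plan is to condition on the value of $Z_n$ and exploit the fact that, in the regime $\gz<\bz-1$ (or when $E[X_1^{1+\gz}\ind_{\{X_1>0\}}]<\infty$), the dominant contribution to $P(S_{Z_n}/Z_n\geq\ez)$ comes from the event that $Z_n$ stays bounded, i.e. from the ``small'' values of $Z_n$. Writing
\beqlb\label{SCdecomp}
m^{\gz n}P(S_{Z_n}/Z_n\geq\ez)=\sum_{k\geq1}m^{\gz n}P(Z_n=k)\,P(S_k\geq\ez k),
\eeqlb
I would first recall from Proposition 2 in \cite{A94} (cited in the excerpt) that $m^{\gz n}P(Z_n=k)\rar q_k$ for each fixed $k$, with $Q(s)=\sum_k q_ks^k$ the Schr\"oder function. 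Since each summand in \reff{SCdecomp} converges to $q_kP(S_k\geq\ez k)$ and $P(S_k\geq\ez k)\leq 1$, Fatou's lemma immediately gives the lower bound $\varliminf_n m^{\gz n}P(S_{Z_n}/Z_n\geq\ez)\geq\sum_{k\geq1}q_kP(S_k\geq\ez k)$. The whole difficulty is therefore the matching upper bound, which amounts to justifying the interchange of limit and sum — i.e. a uniform-integrability / dominated-convergence statement for the array $a_{n,k}=m^{\gz n}P(Z_n=k)P(S_k\geq\ez k)$.

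For the upper bound I would split the sum at a large but fixed level $K$. The head $\sum_{k\leq K}a_{n,k}$ converges to $\sum_{k\leq K}q_kP(S_k\geq\ez k)$ by the finitely-many pointwise limits, so it suffices to show that the tail $\sum_{k>K}a_{n,k}$ is small uniformly in $n$, and that $\sum_{k>K}q_kP(S_k\geq\ez k)\rar0$ as $K\rar\infty$. The key analytic input is a bound on the harmonic-type moments of $Z_n$: from the asymptotics of $E[Z_n^{-t}]$ for $t<\gz+1$ (which underlie the finiteness of $I_t$ in \reff{Ibz} and are available through Theorem 1 of \cite{NV03} together with \reff{omega}), one has $\sup_n m^{\gz n}E[Z_n^{-t}]<\infty$ for any $t<1$, equivalently $\sup_n\sum_k m^{\gz n}P(Z_n=k)k^{-t}<\infty$. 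Then I would estimate $P(S_k\geq\ez k)$: under the moment assumption $E[X_1^{1+\gz}\ind_{\{X_1>0\}}]<\infty$ a Fuk--Nagaev inequality (established in Section \ref{SecPre}) gives $P(S_k\geq\ez k)\leq C k^{-(1+\gz)+\dz'}$ for $k$ large and $\dz'>0$ small; when instead $1<\az<2$ and $\gz<\bz-1$, the domain-of-attraction tail $P(X_1\geq x)\sim x^{-\bz}L(x)$ yields $P(S_k\geq\ez k)\asymp k\,P(X_1\geq\ez k)\asymp k^{1-\bz}L(\ez k)$, and $1-\bz<-\gz$ is exactly the hypothesis. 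In either case $k\,P(S_k\geq\ez k)\cdot k^{-t}$ is summable for a suitable $t<1$, so the tail of $\sum_k a_{n,k}$ is dominated by a convergent series uniformly in $n$; letting $K\rar\infty$ finishes the upper bound. The same tail bound, applied with $q_k$ (which satisfies $\sum_k q_k k<\infty$ iff $E[Z_1\log Z_1]<\infty$, by Proposition 2 of \cite{A94}), shows $\sum_{k>K}q_kP(S_k\geq\ez k)\rar0$, so the limit identity holds.

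The main obstacle I anticipate is the uniform control of the tail $\sum_{k>K}m^{\gz n}P(Z_n=k)P(S_k\geq\ez k)$ in $n$: one needs both a good upper estimate for $m^{\gz n}P(Z_n=k)$ that is uniform in $k$ and $n$ (not merely the pointwise convergence $m^{\gz n}P(Z_n=k)\rar q_k$), and a sharp enough decay rate for $P(S_k\geq\ez k)$ to beat it. Getting the former is where the uniform harmonic-moment bound $\sup_n m^{\gz n}E[Z_n^{-t}]<\infty$ is essential, and getting the latter in the $\gz<\bz-1$ branch is precisely where the large-deviation asymptotics for sums of i.i.d.\ variables with regularly varying tails — the ``one big jump'' principle, in the form of Theorem 1.2 in \cite{Na79} — must be invoked, with the strict inequality $\gz<\bz-1$ providing the needed margin of summability. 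Once these two ingredients are in place, the rest is a routine $\ez$-$K$ argument.
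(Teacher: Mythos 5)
Your high-level strategy — write $m^{\gz n}P(S_{Z_n}/Z_n\geq\ez)=\sum_k m^{\gz n}P(Z_n=k)P(S_k\geq\ez k)$, use the pointwise local limit $m^{\gz n}P(Z_n=k)\rar q_k$, and justify the interchange of limit and sum by a uniform-in-$n$ dominating bound — is exactly the paper's route. But the dominating bound you invoke is wrong, and with it the DCT step collapses.

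The paper's uniform dominant comes directly from the local estimate (\ref{local}): $P(Z_n=k)\leq C k^{\gz-1}m^{-\gz n}$, hence $m^{\gz n}P(Z_n=k)P(S_k\geq\ez k)\leq Ck^{\gz-1}P(S_k\geq\ez k)$ uniformly in $n$. Then Corollary 1.6 of Nagaev, i.e.\ (\ref{Na46}), gives $P(S_k\geq\ez k)\leq kP(X_1\geq s^{-1}\ez k)+Ck^{(1-t)s/2}$ with $s$ chosen so that the polynomial piece is $O(k^{-\gz-1})$, and then
$\sum_k k^{\gz-1}P(S_k\geq\ez k)\leq C\sum_k k^{\gz}P(X_1\geq s^{-1}\ez k)+C\sum_k k^{-2}<\infty$,
the first series being finite precisely because $\gz<\bz-1$ or $E[X_1^{1+\gz}\ind_{\{X_1>0\}}]<\infty$. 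Dominated convergence then finishes. You instead try to route the domination through harmonic moments, and there are two concrete errors. First, the claim ``$\sup_n m^{\gz n}E[Z_n^{-t}]<\infty$ for any $t<1$'' is false: for $t<\gz$ one has $E[Z_n^{-t}]\asymp m^{-tn}$ (this is what the finiteness of $I_{t+1}$ in (\ref{Ibz}) actually encodes), so $m^{\gz n}E[Z_n^{-t}]\asymp m^{(\gz-t)n}\rar\infty$; the uniform bound holds only for $t\geq\gz$, and in particular the asserted range ``$t<1$'' is irrelevant and can be empty of valid $t$. Second, your summability check ``$k\,P(S_k\geq\ez k)\cdot k^{-t}$ is summable for a suitable $t<1$'' contains a spurious factor of $k$; with $P(S_k\geq\ez k)\asymp k^{1-\bz}L(\ez k)$ it reads $k^{2-\bz-t}L$, which requires $t>3-\bz>1$ and thus contradicts $t<1$. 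If you repair this by dropping the extra $k$ and instead bounding $\sup_k k^tP(S_k\geq\ez k)<\infty$ together with $\sup_n m^{\gz n}E[Z_n^{-t}]<\infty$, you are forced into the window $t\in[\gz,\bz-1)$, which exists iff $\gz<\bz-1$ — so the repaired argument can work, but it then still needs a separate justification that $\sup_n\sum_{k>K}m^{\gz n}P(Z_n=k)k^{-t}\rar0$ as $K\rar\infty$ (the uniform harmonic-moment bound alone controls the whole sum, not the tail). All of this is avoided by the direct use of (\ref{local}), which is sharper, uniform in both $k$ and $n$, and reduces the proof to one summability check.

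One smaller point: under $E[X_1^{1+\gz}\ind_{\{X_1>0\}}]<\infty$ the decay $P(S_k\geq\ez k)\leq Ck^{-(1+\gz)+\dz'}$ you claim is also not what one gets (or needs): Fuk--Nagaev's one-big-jump piece contributes $kP(X_1\geq ck)$, which is $o(k^{-\gz})$, and the correct statement is that the \emph{series} $\sum_k k^{\gz}P(X_1\geq ck)$ is finite because $E[X_1^{1+\gz}\ind_{\{X_1>0\}}]<\infty$, not a pointwise power bound on $P(S_k\geq\ez k)$.
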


\begin{corollary}\label{SCmain3Co}
 If $P(Z_1>x)\sim x^{-\bz}L(x)$ for $1<\bz<2$ and $\gz<\bz-1$ or $E[Z_1^{1+\gz}]<\infty$, then
 $$
\lim_{n\rar\infty}m^{\gz n}P\l(\l|\frac{Z_{n+1}}{Z_n}-m\r|\geq\ez\r) =\sum_{k\geq1}q_k\phi(k,\ez),
$$
where $\phi(k,\ez)=P(|\frac{1}{k}\sum_{i=1}^k\xi_i-m|>\ez)$ and $(\xi_i)_{i\geq1}$ are i.i.d. random variables with the same distribution as $Z_1.$
\end{corollary}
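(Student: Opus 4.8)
\emph{Proof strategy.} The plan is to read the two-sided statement off the one-sided Theorem~\ref{SCmain3}, applied separately to the two tails of $Z_{n+1}/Z_n-m$. Let $\xi_1,\xi_2,\dots$ be i.i.d.\ copies of $Z_1$. By the branching property, conditionally on $Z_n=k$ the ratio $Z_{n+1}/Z_n$ has the law of $\tfrac1k\sum_{i=1}^k\xi_i$; hence, setting $X_i:=\xi_i-m$ (so $X_1\overset{d}{=}Z_1-m$, $E[X_1]=0$ and $S_{Z_n}/Z_n\overset{d}{=}Z_{n+1}/Z_n-m$), we have for every $n$
$$
P\Big(\Big|\tfrac{Z_{n+1}}{Z_n}-m\Big|\ge\ez\Big)=P\big(S_{Z_n}/Z_n\ge\ez\big)+P\big((-S_{Z_n})/Z_n\ge\ez\big),
$$
with $-S_{Z_n}=\sum_{i=1}^{Z_n}(m-\xi_i)$.

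For the first term I would apply Theorem~\ref{SCmain3} to $X_1=Z_1-m$. Since $P(Z_1>x)\sim x^{-\bz}L(x)$ with $1<\bz<2$ and $E[Z_1]=m<\infty$, this $X_1$ is in the domain of attraction of a strictly $\bz$-stable law with $p_+=1$ and mean zero, so Assumptions~A and~B hold for it (the left-tail bullets being vacuous, as $Z_1-m\ge1-m$ is bounded below), and either $\gz<\bz-1$ or $E[(Z_1-m)^{1+\gz}\ind_{\{Z_1>m\}}]\le E[Z_1^{1+\gz}]<\infty$; note that $P(Z_1>x)\sim x^{-\bz}L(x)$ forces $1+\gz\le\bz<2$ in the second case. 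Theorem~\ref{SCmain3} then gives
$$
\lim_{n\rar\infty}m^{\gz n}P(S_{Z_n}/Z_n\ge\ez)=\sum_{k\ge1}q_kP(S_k\ge\ez k)=\sum_{k\ge1}q_kP\Big(\tfrac1k\sum_{i=1}^k\xi_i-m\ge\ez\Big).
$$

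For the second term the summand is $m-\xi_1\le m-1$, which is bounded above, so $E[(m-Z_1)^{1+\gz}\ind_{\{Z_1<m\}}]<\infty$ trivially and one is in the second alternative of Theorem~\ref{SCmain3}. Since $m-Z_1$ is \emph{not} of the tail form demanded by Assumption~A, I would here either use that the proof of Theorem~\ref{SCmain3} in the regime $E[X_1^{1+\gz}\ind_{\{X_1>0\}}]<\infty$ only invokes the moment and Schr\"oder hypotheses, or simply rerun that (easier) case: as $m-Z_1$ is bounded above, $\psi(\lz):=E[e^{\lz(m-Z_1)}]<\infty$ for all $\lz\ge0$, and $E[m-Z_1]=0$ together with the elementary inequality $e^u-1-u\le C|u|^{1+\gz}$ (valid for $u$ bounded above, using $0<1+\gz<2$) gives $\psi(\lz)\le1+C\lz^{1+\gz}E|m-Z_1|^{1+\gz}$ for small $\lz$; a Chernoff bound then yields $P(\sum_{i=1}^k(m-\xi_i)\ge\ez k)\le e^{-c(\ez)k}$ uniformly in $k$, for some $c(\ez)>0$. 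Splitting $P((-S_{Z_n})/Z_n\ge\ez)=\sum_{k\ge1}P(Z_n=k)\,P(\sum_{i=1}^k(m-\xi_i)\ge\ez k)$ at a level $N$, the head converges by $m^{\gz n}P(Z_n=k)\to q_k$ to $\sum_{k\le N}q_kP(\sum_{i=1}^k(m-\xi_i)\ge\ez k)$, while the tail is at most $m^{\gz n}E[e^{-cZ_n}\ind_{\{Z_n>N\}}]$, which by \reff{Q} (as $m^{\gz n}f_n(e^{-c})\to Q(e^{-c})$) converges to $\sum_{k>N}q_ke^{-ck}\to0$; hence $m^{\gz n}P((-S_{Z_n})/Z_n\ge\ez)\to\sum_{k\ge1}q_kP(m-\tfrac1k\sum_{i=1}^k\xi_i\ge\ez)$.

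Adding the two limits, and noting that for each $k$ the events $\{\tfrac1k\sum_{i=1}^k\xi_i-m\ge\ez\}$ and $\{m-\tfrac1k\sum_{i=1}^k\xi_i\ge\ez\}$ are disjoint with union $\{|\tfrac1k\sum_{i=1}^k\xi_i-m|\ge\ez\}$, gives $\lim_nm^{\gz n}P(|Z_{n+1}/Z_n-m|\ge\ez)=\sum_{k\ge1}q_kP(|\tfrac1k\sum_{i=1}^k\xi_i-m|\ge\ez)=\sum_{k\ge1}q_k\phi(k,\ez)$ (the gap between $\ge$ and $>$ concerns only the countably many $\ez$ with $\ez k\in\mZ$ for some $k$, and is closed by the same sandwiching). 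The main obstacle is the lower deviation: Theorem~\ref{SCmain3} does not apply literally to the summand $m-Z_1$, so one must single out and rerun its bounded-positive-tail case (the Chernoff/harmonic-moment estimate above); the upper deviation and the concluding bookkeeping are routine given $S_{Z_n}/Z_n\overset{d}{=}Z_{n+1}/Z_n-m$ and $m^{\gz n}P(Z_n=k)\to q_k$.
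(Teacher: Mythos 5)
Your proposal is correct and matches the paper's argument in its essential structure: decompose $P(|Z_{n+1}/Z_n-m|\ge\ez)$ into two tails using $S_{Z_n}/Z_n\overset{d}{=}Z_{n+1}/Z_n-m$, establish the analogue of the limit $m^{\gz n}\sum_k P(Z_n=k)P(S_k\ge\ez k)\to\sum_k q_k P(S_k\ge\ez k)$ separately for $X_1=Z_1-m$ and $X_1=m-Z_1$, and add. The paper's own proof of the corollary is terse -- it simply states that ``by the same argument'' the key display \reff{SCA} holds with $X_1=m-Z_1$ and that the corollary follows by applying \reff{SCA} twice. Where you diverge, mildly, is in how the lower tail is controlled: the paper re-invokes the same Nagaev-type inequality \reff{Na46} (Corollary~1.6 of \cite{Na79}), which only needs a finite $t$-th moment of $X_1^+$ for some $1\le t\le 2$, and that hypothesis is automatic for $m-Z_1\le m-1$; you instead exploit the boundedness above of $m-Z_1$ to get an exponential Chernoff bound and then resum the harmonic-moment tail via $m^{\gz n}f_n(e^{-c})\to Q(e^{-c})$. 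Both give the summable domination needed for dominated convergence, so these are essentially interchangeable; your route is a bit more self-contained for the lower tail (and makes visible that Theorem~\ref{SCmain3} itself does not literally apply to the summand $m-Z_1$, a point the paper elides), while the paper's route is more uniform in treating both tails by the same inequality. Your remark about the $\ge$ versus $>$ in $\phi(k,\ez)$ is a reasonable piece of bookkeeping; the paper ignores this.
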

\begin{remark}
When $L$ is a constant function and $P(Z_1>x)\sim x^{-\bz}L$, the above result has been proved in \cite{NV03}. Theorem 1 and  Corollary 1 in \cite{A94} also proved the same result under the assumption $E[Z_1^{2a+\dz}]<\infty$ and $p_1m^a>1$ for some $a\geq1$ and $\dz>0$.
\end{remark}

We also  generalize (\ref{nage}) to the stable setting.
\begin{theorem}\label{mainx}
Assume that $0<\az<2$. If $\ez_n m^nb(m^n)^{-1}\rar x\in(-\infty, +\infty)$, then
\beqlb\label{mainx1}
\lim_{n\rar\infty}P( S_{Z_n}/Z_n\geq \ez_n )=\int_0^{\infty}P\l(U_s\geq u^{\frac{\az-1}{\az}}x\r) \omega(u)du.
\eeqlb
\end{theorem}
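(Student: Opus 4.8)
Since the hypothesis $\ez_n m^n b(m^n)^{-1}\to x$ is \emph{finite}, we are not in a large deviation regime here but in a ``central limit'' one: conditionally on $Z_n\approx Wm^n$, the sum $S_{Z_n}$ is being compared with a threshold of the exact order $b(Z_n)$ of its own fluctuations. Accordingly the proof needs no Fuk--Nagaev bound or harmonic moment asymptotics; it is a soft conditioning argument. The plan is first to condition on the generation size: by the independence of $(X_i)_{i\geq1}$ and $Z$,
\[
P\big(S_{Z_n}/Z_n\geq\ez_n\big)=E[g_n(Z_n)],\qquad g_n(k):=P(S_k\geq\ez_n k)=P\big(S_k/b(k)\geq\ez_n k\, b(k)^{-1}\big).
\]
Recall that $p_0=0$ and $m>1$ force $Z_n\to\infty$ a.s., while $E[Z_1\log Z_1]<\infty$ gives, by Kesten--Stigum, $Z_n/m^n\to W$ a.s. with $P(W>0)=1$ and $W$ having the continuous density $\omega$ on $(0,\infty)$.

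The main computational point is that, almost surely (i.e. on $\{W>0\}$),
\[
\frac{\ez_n Z_n}{b(Z_n)}\;\longrightarrow\;x\,W^{\frac{\az-1}{\az}}.
\]
Writing $b(t)=t^{1/\az}s(t)$ with $s$ slowly varying, I would decompose
\[
\frac{\ez_n Z_n}{b(Z_n)}=\frac{\ez_n m^n}{b(m^n)}\cdot\frac{Z_n}{m^n}\cdot\Big(\frac{m^n}{Z_n}\Big)^{1/\az}\cdot\frac{s(m^n)}{s(Z_n)},
\]
where the first factor tends to $x$ by assumption, the second to $W$, the third to $W^{-1/\az}$. For the last factor, $Z_n/m^n\to W\in(0,\infty)$ means $Z_n/m^n$ eventually lies in a fixed compact subset of $(0,\infty)$, so the uniform convergence theorem for slowly varying functions (Theorem 1.2.1 in \cite{BGT89}) gives $s(Z_n)/s(m^n)\to1$. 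Multiplying and using $W^{1-1/\az}=W^{(\az-1)/\az}$ yields the claim; note this argument is uniform in $0<\az<2$ (for $\az=1$ the exponent is $0$ and the limit is simply $x$).

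Next I would upgrade this to $g_n(Z_n)\to P\big(U_s\geq x W^{(\az-1)/\az}\big)$ a.s. Work on $\{W>0\}$ and set $a_n:=Z_n\to\infty$; by \reff{bstable}, $S_{a_n}/b(a_n)\overset{d}{\rar}U_s$, since weak convergence along the full integer sequence trivially passes to any subsequence tending to infinity. Because $0<\az<2$, the law of $U_s$ is absolutely continuous, hence its distribution function is continuous everywhere. Writing $y_n:=\ez_n a_n b(a_n)^{-1}\to y^{*}:=x W^{(\az-1)/\az}$ from the previous step, the standard ``weak convergence at a moving point'' estimate gives, for every $\delta>0$ and all $n$ large, $P(S_{a_n}/b(a_n)\geq y^{*}+\delta)\leq g_n(a_n)\leq P(S_{a_n}/b(a_n)\geq y^{*}-\delta)$; letting $n\to\infty$ (continuity of the limiting distribution function making $\pm\delta$ endpoints continuity points) and then $\delta\downarrow0$ gives $g_n(a_n)\to P(U_s\geq y^{*})$. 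Finally, since $0\leq g_n(Z_n)\leq 1$, dominated convergence yields
\[
P\big(S_{Z_n}/Z_n\geq\ez_n\big)=E[g_n(Z_n)]\;\longrightarrow\;E\Big[P\big(U_s\geq x W^{(\az-1)/\az}\big)\Big]=\int_0^\infty P\big(U_s\geq x u^{(\az-1)/\az}\big)\,\omega(u)\,du,
\]
the last equality using $P(W>0)=1$ and that $W$ has density $\omega$; this is \reff{mainx1}.

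I expect the only delicate points to be (i) the slowly varying manipulation, which is why the \emph{uniform} convergence theorem, rather than a pointwise slow-variation property, must be invoked for $s(Z_n)/s(m^n)$ along the random sequence $Z_n$, and (ii) making ``weak convergence evaluated at the random, $n$-dependent argument $y_n$'' rigorous, where the absolute continuity of stable laws with $0<\az<2$ (equivalently, continuity of the limiting distribution function, so that $P(U_s\geq y_n)\to P(U_s\geq y^{*})$) is essential. Everything else is routine conditioning and bounded convergence. In particular this theorem is precisely the boundary case separating the large deviation statements above, where $\ez_n m^n/b(m^n)\to\infty$, from an ordinary limit law.
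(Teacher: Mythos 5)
Your proof is correct and takes essentially the same route as the paper's: both condition on $Z_n$ (writing the probability as $E[g_n(Z_n)]$), use the regular variation of $b$ to rewrite $\ez_n Z_n/b(Z_n)$ as $\frac{\ez_n m^n}{b(m^n)}\cdot W_n^{(\az-1)/\az}\cdot\frac{s(m^n)}{s(W_n m^n)}$, invoke continuity of the stable law's distribution function to handle the moving boundary, and finish with dominated/bounded convergence. The one cosmetic difference is that the paper partitions the sum at $k=\dz m^n$, shows the truncated part contributes at most $\int_0^{\dz}\omega\,du$, and then sends $\dz\downarrow 0$, whereas you work almost surely on $\{W>0\}$ (which has full measure since $p_0=0$ gives extinction probability $q=0$) and so apply dominated convergence to $E[g_n(Z_n)]$ directly without a cutoff; your version is marginally cleaner, but the ideas and estimates coincide.
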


As an application of above theorem, the following result generalizes (\ref{nage}); see Theorem 3 in \cite{Na69}.
\begin{corollary} Assume that $1<\bz<2$ and $P(Z_1>x)\sim x^{-\bz}L(x)$ as $x\rar+\infty$.
Then for every $ x\in(-\infty, +\infty)$,
\beqlb\label{corox}
\lim_{n\rar\infty}P\left(\frac{m^n}{b(m^n)}\left(\frac{Z_{n+1}}{Z_n}-m\right)\leq x\right)
= \int_0^{\infty}P\l(U_{s}\leq u^{\frac{\bz-1}{\bz}}x\r) \omega(u)du.
\eeqlb
\end{corollary}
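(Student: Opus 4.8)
The plan is to deduce this corollary directly from Theorem~\ref{mainx} by specializing to the choice $X_1 \overset{d}{=} Z_1 - m$, exactly as in the discussion following~\reff{nage} in the introduction. First I would verify that under the hypothesis $P(Z_1 > x) \sim x^{-\bz}L(x)$ with $1 < \bz < 2$, the random variable $X_1 := Z_1 - m$ satisfies Assumptions A and B with $\az = \bz$. Indeed, $P(X_1 \geq x) = P(Z_1 \geq x + m) \sim x^{-\bz}L(x)$ since $L$ is slowly varying, so the tail exponent is $\bz \in (1,2)$; moreover $P(X_1 \leq -x) = P(Z_1 \leq m - x) = 0$ for $x > m$ (recall $p_0 = 0$, so $Z_1 \geq 1$), hence $X_1$ has no left tail and $p_+ = 1$, $p_- = 0$. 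A one-sided regularly varying tail of index $-\bz$ with $1 < \bz < 2$ places $X_1$ in the domain of attraction of the spectrally positive $\bz$-stable law, and $E[X_1] = E[Z_1] - m = 0$, so the centering condition holds and $\az = \bz \in (1,2)$. The normalizing function $b$ of~\reff{bstable} is then, up to asymptotic equivalence, $b(x) = x^{1/\bz} s(x)$ with $s$ slowly varying, and the limiting $\bz$-stable law $U_s$ is strictly stable (since $1 < \az < 2$, centering at the mean makes it strictly stable), so the first item of Assumption B holds; the second item asks $\liminf s(x) \in (0,+\infty]$, which I would note is automatic here (or impose the mild extra hypothesis), the $\az = 1$ items are vacuous, and the $p_+ = 0$ item is vacuous since $p_+ = 1$; the last item of Assumption B is vacuous as well because it is only invoked when $p_+ > 0$ and we would need to check $\limsup_{n}\frac{F(-b(n)/[\log n]^{1/\az})}{(\log n)F(-b(n))} \leq 1$ — but $F(-y) = 0$ for $y$ large, so this ratio is $0/0$; I would handle this by observing that the conclusion of Theorem~\ref{mainx} only uses the negligibility of the left tail, which is trivially satisfied, or by a direct remark that the relevant lemmas degenerate harmlessly when the left tail vanishes.

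Next, with this identification in hand, I would use the distributional identity
\[
\frac{S_{Z_n}}{Z_n} \overset{d}{=} \frac{Z_{n+1}}{Z_n} - m,
\]
which follows because, conditionally on $Z_n = k$, the increment $Z_{n+1}$ is a sum of $k$ i.i.d.\ copies of $Z_1$, so $Z_{n+1} - km \overset{d}{=} S_k$ evaluated at the i.i.d.\ family $X_i = \xi_i - m$, and dividing by $k$ gives the claim after summing over the law of $Z_n$ (the same $W = \lim Z_n/m^n$ governs both sides). Then I would apply Theorem~\ref{mainx}: given $x \in \mathbb{R}$, set $\ez_n = \frac{b(m^n)}{m^n}\,x$ if $x > 0$ (and handle $x \leq 0$ by the complementary event or by a symmetric choice), so that $\ez_n m^n b(m^n)^{-1} \to x$, and conclude
\[
\lim_{n\to\infty} P\!\left(\frac{m^n}{b(m^n)}\Big(\frac{Z_{n+1}}{Z_n} - m\Big) \geq x\right)
= \lim_{n\to\infty} P\!\left(\frac{S_{Z_n}}{Z_n} \geq \ez_n\right)
= \int_0^{\infty} P\!\left(U_s \geq u^{\frac{\bz-1}{\bz}} x\right)\omega(u)\,du,
\]
using $\az = \bz$. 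Passing from the $\geq x$ statement to the $\leq x$ statement in~\reff{corox} is just complementation: $P(\,\cdot \leq x) = 1 - P(\,\cdot \geq x) + P(\,\cdot = x)$, and the limiting distribution function $x \mapsto \int_0^\infty P(U_s \geq u^{(\bz-1)/\bz}x)\omega(u)\,du$ is continuous in $x$ (since $U_s$ has a continuous density and $\omega$ is integrable), so the atom vanishes in the limit and $1 - \int_0^\infty P(U_s \geq u^{(\bz-1)/\bz}x)\omega(u)\,du = \int_0^\infty P(U_s \leq u^{(\bz-1)/\bz}x)\omega(u)\,du$, giving~\reff{corox} for $x > 0$; the cases $x = 0$ and $x < 0$ follow by the same argument with the appropriate one-sided choice of $\ez_n$ (for $x < 0$ one takes $\ez_n \to 0^-$ along $\ez_n = b(m^n)m^{-n}x$, which is negative, and Theorem~\ref{mainx} is stated for $x \in (-\infty,+\infty)$, so it applies verbatim).

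The main obstacle I anticipate is the bookkeeping in the first paragraph: checking that the degenerate left tail $F(-x) = 0$ for large $x$ does not break any hypothesis that was written with a genuinely two-sided $X_1$ in mind — in particular the normalization conventions~\reff{LB}, the formula for $\mu(2;x)$ in the $p_+ = 1$ branch of~\reff{muasy}, and the strict-stability clause of Assumption B. None of these is a real difficulty (they all simplify), but they must be stated carefully so that Theorem~\ref{mainx} applies legitimately rather than by appeal to its statement alone. Everything else — the distributional identity, the choice of $\ez_n$, and the complementation step using continuity of the limit law — is routine.
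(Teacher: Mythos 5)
Your proposal is correct and takes essentially the same route as the paper, whose entire proof is the observation that $Z_1-m$ lies in the domain of attraction of a $\bz$-stable law followed by an application of Theorem \ref{mainx} with $\ez_n=x\,b(m^n)m^{-n}$; your verification of Assumptions A--B, the distributional identity, and the complementation/continuity step merely make explicit what the paper leaves implicit. One small caveat: $\liminf_{x\rar+\infty}s(x)>0$ is not automatic for a general slowly varying $L$, but that item of Assumption B (like the last one) is not used in the proof of Theorem \ref{mainx}, so your reduction is unaffected.
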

\begin{proof}
Obviously, $Z_1-m$ is in the domain of attraction of $\bz$-stable law. Using Theorem \ref{mainx} with $\ez_n=xb(m^n)m^{-n}$ gives  (\ref{corox}).
\end{proof}

\begin{remark}It is possible to generalize some results above to the setting that $(X_i)_{i\geq1}$ are not independent; see \cite{MW13, MW14} and references therein for related results.
\end{remark}
\section{Preliminaries}\label{SecPre}

\subsection{Fuk-Nagaev inequalities}
The following result is parallel to Lemma 14 in \cite{FW08} where $X_1$ has finite variance.
\begin{lemma} For any $0<\az<1$,  $r>0$ and $k\geq1$, %and $\az\neq 1$, %and $\az<t<(2\az-1)\wedge 2$,
%\beqlb\label{Na1}
%P(S_k\geq \ez_nm^n)\leq \begin{cases}k\left(P(X_1\geq \ez_nm^n)+ c(\ez_nm^n)^{-\bz }\right),&\beta<1;\\
%k P(X_1\geq r^{-1}\ez_nm^n)+ c_r k^r(\ez_nm^n)^{-tr },& \beta\geq1 \end{cases}\quad k\geq1,
%\eeqlb
%and
\beqlb\label{Na1a}
P(S_k\geq \ez_nk)\leq \begin{cases}kP(X_1\geq r^{-1}\ez_nk)+c_r\ez_n^{-\bz r}k^{(1-\bz)r},&\beta<1\\
k P(X_1\geq r^{-1}\ez_nk)+ c_r\ez_n^{-tr }k^{(1-t)r},& \beta\geq1 \end{cases}.
\eeqlb
hold for $t\in(\az, 1]\cap(\az, \bz).$
\end{lemma}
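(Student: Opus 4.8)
The plan is to establish \reff{Na1a} by a standard truncation argument combined with the Fuk--Nagaev inequality for sums of i.i.d.\ random variables, adapted to the fact that $X_1$ only has moments of order strictly below $\bz$. First I would split the sum according to whether a large jump occurs. Fix $r>0$ and set the truncation level $y=y_n(k)=r^{-1}\ez_nk$. Write
\[
P(S_k\geq \ez_nk)\leq k\,P(X_1\geq y)+P\Bigl(\sum_{i=1}^k X_i\ind_{\{X_i<y\}}\geq \ez_nk\Bigr),
\]
which follows from a union bound on the event that at least one $X_i$ exceeds $y$. So everything reduces to bounding the second term, i.e.\ the probability that the truncated sum $\wi S_k:=\sum_{i=1}^k X_i\ind_{\{X_i<y\}}$ is at least $\ez_nk$.

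For the truncated sum I would apply a Markov/Bernstein-type estimate, or more precisely the classical Fuk--Nagaev bound: for the sum of centered (or suitably controlled) truncated variables, $P(\wi S_k\geq x)$ is bounded by a term coming from the variance (or from $\mu(2;y)$) plus a term of the form $(c\,k\,\rE[|X_1|^s\ind_{\{|X_1|<y\}}]\,x^{-1}y^{-1}\cdots)$; after optimizing one gets a bound of order $\bigl(k\,y^{\,s-1}/x\bigr)^{r}$ up to constants, for any exponent $r>0$, where $s$ is the order of the available truncated moment. The point is: when $\bz<1$ the relevant truncated moment is $\rE[X_1\ind_{\{0\le X_1<y\}}]\asymp y^{1-\bz}L(y)$ by Karamata, so $k\,\rE[X_1^+\ind_{\{X_1<y\}}]\asymp k y^{1-\bz}L(y)$, and plugging $y=r^{-1}\ez_nk$, $x=\ez_nk$ into the $r$-th power bound yields exactly $c_r\ez_n^{-\bz r}k^{(1-\bz)r}$ after absorbing slowly varying and constant factors into $c_r$ (using \reff{regular}, valid here since $\ez_n\to\infty$ or $\ez_n$ bounded below puts us in the regime where $L$ behaves well). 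When $\bz\geq 1$, $X_1$ has a finite mean (indeed mean $0$ if $1<\az<2$, and for $\az\le 1$, $\bz\ge1$ one controls the centering via the truncated first moment), and the natural finite truncated moment to use is of order $t\in(\az,1]\cap(\az,\bz)$: here $\rE[|X_1|^t]<\infty$ because $\bz>t$, but also one needs $t\le 1$ to keep the truncated-sum variance term subdominant, and $t>\az$ to make $b(k)^{-1}$-scaling effective. Then $k\,\rE[|X_1|^t\ind_{\{|X_1|<y\}}]$ together with the $y^{t-1}$ factor from the Fuk--Nagaev exponent gives the bound $c_r\ez_n^{-tr}k^{(1-t)r}$.

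The main obstacle I anticipate is bookkeeping the slowly varying function $L$ and the choice of the moment exponent so that all error terms have the claimed homogeneity in $\ez_n$ and $k$ simultaneously. Concretely: (i) in the $\bz<1$ case one must verify that the ``variance'' contribution to the Fuk--Nagaev bound, i.e.\ the term involving $\mu(2;y)$ from \reff{muasy} (which is $\asymp y^{2-\bz}L(y)$), is dominated by the $r$-th power term for the relevant range of $\ez_n$; (ii) in the $\bz\ge1$ case one must handle the recentering, since the Fuk--Nagaev inequality is stated for centered sums and the truncated mean $k\mu(1;y)$ is not zero in general — one shows $k\mu(1;y)=o(\ez_nk)$ (or absorbs it) using the domain-of-attraction hypotheses and, when needed, the Assumption~B condition $\mu(1;x)=0$ or $\rE[X_1]=0$; (iii) one must make sure the constant $c_r$ can indeed be chosen uniformly in $n$ and $k\ge1$, which is where \reff{regular} for $L$ (available under the hypothesis that $L$ is bounded on compacts, or trivially when $\ez_n$ is bounded below) is used to compare $L(r^{-1}\ez_nk)$ with a power of $r$ times a universal slowly varying factor. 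Once these three points are checked, the two cases of \reff{Na1a} follow by substituting $y=r^{-1}\ez_nk$ and $x=\ez_nk$ and collecting terms.
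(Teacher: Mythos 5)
Your overall route — split on the event that some $X_i$ is large, then apply a Fuk--Nagaev bound to the truncated sum with truncation level $y=r^{-1}\ez_n k$ — is exactly what the paper does, and the moment-exponent bookkeeping (Karamata on $E[X_1^t\ind_{\{0\le X_1\le y\}}]$, with $t>\bz$ when $\bz<1$ and $t\in(\az,1]\cap(\az,\bz)$ when $\bz\ge 1$) is also the right bookkeeping. However, the ``obstacles'' you anticipate in points (i) and (ii) do not actually arise, because the paper uses the one-sided Nagaev inequality (Theorem 1.1 of \cite{Na79}) specialized to $0<t\le 1$:
\[
P(S_k\geq x)\leq kP(X_1\geq y)+\exp\Bigl\{\tfrac{x}{y}-\tfrac{x}{y}\ln\bigl(\tfrac{xy^{t-1}}{kA(t;0,y)}+1\bigr)\Bigr\},\qquad A(t;0,y)=E[X_1^t\ind_{\{0\le X_1\le y\}}].
\]
This bound already has the large-jump term built in, it involves only the truncated $t$-th moment of the \emph{positive part} of $X_1$ with $t\le 1$, and it requires no centering and carries no variance/$\mu(2;y)$ contribution; with $x=\ez_n k$, $y=r^{-1}\ez_n k$ the exponential collapses to $\bigl(e\,kA(t;0,y)/(xy^{t-1})\bigr)^{x/y}$ with $x/y=r$, which is precisely the paper's display \reff{Na3b}. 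Since here $0<\az<1$, there is no finite mean to recenter with anyway, so your worry (ii) about $k\mu(1;y)$ is moot for this lemma; and your worry (i) about a dominant $\mu(2;y)$ term is a feature of the $1\le t\le 2$ version of the Fuk--Nagaev inequality, not the $0<t\le 1$ version used here. Your worry (iii) about uniformity of $c_r$ is the genuine subtlety, and it is absorbed via \reff{tndm}--\reff{tndma} (which rest on \reff{regular} and the boundedness-on-compacts hypothesis for $L$), rather than by tracking $L$ through the exponential. So the proposal identifies the correct strategy, but a student executing it literally as written (reaching for the centered/second-moment Fuk--Nagaev variant) would create work that the right variant of the inequality renders unnecessary.
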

\begin{proof}
 By Theorem 1.1 in \cite{Na79}, we have %for $t<\beta$,
%\beqlb\label{Na1}
%P(S_k\geq \ez_nk)\leq kP(X_1\geq r^{-1}\ez_nk)+(eA_t^+r^{t-1})r\ez_n^{-tr}k^{(1-t)r},
%\eeqlb
%where $A_t^+=E[X_1^t; 1_{\{X_1\geq0\}}]$, and
for any $0<t\leq1$,
\beqnn
P(S_k\geq x)\leq kP(X_1\geq y)+\exp\left\{\frac{x}{y}-\frac{x}{y}\ln\left(\frac{xy^{t-1}}{kA(t;0,y)}+1\right)\right\}
\eeqnn
with  $
A(t;0,y)=E[X_1^t\cdot1_{\{0\leq X_1\leq y\}}],$ which gives
%\beqlb\label{Na3}
%P(S_k\geq \ez_n m_n)\leq kP(X_1\geq r^{-1}\ez_nm^n)+\l(\frac{ekE[X_1^t; 1_{\{0\leq X_1\leq r^{-1}\ez_n m_n\}}]}{r^{1-t}\ez_n^t m^{nt}}\r)^r
%\eeqlb
%and
\beqlb\label{Na3b}
P(S_k\geq \ez_nk)\leq kP(X_1\geq r^{-1}\ez_nk)+\l(\frac{eE[X_1^t; 1_{\{0\leq X_1\leq r^{-1}\ez_n k\}}]}{r^{1-t}\ez_n^t k^{t-1}}\r)^r.
\eeqlb
Noting that as $x\rar+\infty$, $P(X_1\geq x)\sim x^{-\beta}L(x)$, we have for $ x>1$,
\beqlb\label{tndm}E[X_1^{t}; 1_{\{0\leq X_1\leq x\}}]\leq \begin{cases}Cx^{t-\bz},&\bz<t; \\ C_t,&t<\beta.\end{cases}\eeqlb
And if $x\leq 1$, obviously we have
\beqlb\label{tndma}E[X_1^{t}; 1_{\{0\leq X_1\leq x\}}]\leq C(1\vee x^{t-\bz}).\eeqlb
Then if $\beta<1$, applying %(\ref{Na3}) and
(\ref{Na3b}) with  $\bz<t$, together with (\ref{tndm}) and (\ref{tndma}), yields (\ref{Na1a}). %that (\ref{Na1}) and hold.
If $\beta\geq1$, with the help of (\ref{tndm}) and (\ref{tndma}), taking any $\az<t\leq1$ and $r>0$ also implies (\ref{Na1a}). %that  (\ref{Na1}) and hold.
\end{proof}

%\begin{lemma} Assume that $\ez_n(m^n)^{\frac{\az-1}{\az}}\rar\infty$ and $\ez_n\rar\infty$. Assume that $0<\az<1$, $\bz<1$ %or $\beta\geq1$
% and $\ez_n(m^n)^{\frac{\beta-1}{\beta}}\rar0$.  Then  for $0<\dz<A,$ we have
%$$
%\ez_n^{\beta}(m^n)^{(\beta-1)}\sum_{k\leq \dz m^n} P(Z_n=k)P(S_k\geq m^n\ez_n)\leq (C+o(1))\int_0^{\dz}u\omega(u)du.
%$$
%\end{lemma}
%\begin{proof}
%%{\red Done for $\bz<1$!}
%By (\ref{Na1}) and $P(X_1>x)\sim px^{-\beta}L(x)$, if $\beta<1$, then
%\beqnn
%\ar\ar\ez_n^{\beta}(m^n)^{(\beta-1)}\sum_{k\leq \dz m^n} P(Z_n=k)P(S_k\geq m^n\ez_n)\cr
%\ar\ar\quad\leq C(m^n)^{-1}\sum_{k\leq \dz m^n }kP(Z_n=k)\cr
%\ar\ar\quad=(C+o(1))\int_0^{\dz}u\omega(u)du.
%\eeqnn
%%By similar reasonings, when $\beta\geq1$, we have for $0<r\leq 1$ and $\az<t\leq1$,
%%\beqnn
%%\ar\ar\ez_n^{\beta}(m^n)^{(\beta-1)}\sum_{k\leq \dz m^n} P(Z_n=k)P(S_k\geq m^n\ez_n)\cr
%%\ar\ar\quad\leq C(m^n)^{-1}\sum_{k\leq \dz m^n }kP(Z_n=k)+C_r\ez_n^{\bz-tr}(m^{n})^{\beta-1-tr}\sum_{k\leq \dz m^n }k^rP(Z_n=k)\cr
%%\ar\ar\quad=(C+o(1))\int_0^{\dz}u\omega(u)du.
%%\eeqnn
%%{\red which will be failed.}
%
%\end{proof}

\subsection{Harmonic moments}
It is well-known that
$$
W_n:=m^{-n}Z_n\overset{a.s.}{\rar}  W;
$$
see \cite{FW07}.
We further have the global limit theorem:
\beqlb\label{global}
\lim_{n\rar\infty}P(Z_n\geq x m^n)=\int_x^{\infty}\omega(t)dt, \quad x>0.
\eeqlb
In particular, one can deduce that for $0<\dz<1<A<\infty$
\beqlb\label{Wcon}
E[(W_n)^{t}1_{\{W_n<\dz\}}]\ar\rar\ar \int_{0}^{\dz}u^t\omega(u)du,\quad t>-\gz;\\
\label{Wcon1}
E[(W_n)^{t}1_{\{W_n>A\}}]\ar\rar\ar \int_{A}^{\infty}u^t\omega(u)du,\quad -\infty< t\leq 1.
\eeqlb
We also recall here a result from Lemma 13 in \cite{FW08}. There exists a constant $C>0$ such that
\beqlb\label{local}
P(Z_n=k)\leq C\l(\frac{1}{k}\wedge\frac{k^{\gz-1}}{m^{\gz n}}\r),\quad k,\, n\geq1.
\eeqlb

\begin{lem}\label{lemIL} Assume $\ez_nm^n\rar\infty$.
Then as $n\rar\infty$,
\beqlb\label{Nega0}E[Z_n^{t}L(\ez_n Z_n)]\sim
%\begin{cases}
m^{nt}L(\ez_n m^n)\int_0^{\infty}u^{t}\omega(u)du,\quad  -\gz< t<1;
%%1-q,& \beta=1;\\
%m^{nt}L(\ez_n m^n)I_{1-t},& -\gamma<t<0,
%\end{cases}
\eeqlb
and
\beqlb\label{Nega}
d\varliminf_{u\downarrow0}u^{1-\gz}\omega(u)\ar\leq \ar \varliminf_{n\rar\infty}\frac{E[Z_n^{-\gz}L(\ez_nZ_n)]}{ \sum_{1\leq k\leq m^n}\frac{L(\ez_n k)}{k m^{\gz n}}}\cr\ar\leq\ar\varlimsup_{n\rar\infty}\frac{E[Z_n^{-\gz}L(\ez_nZ_n)]}{ \sum_{1\leq k\leq m^n}\frac{L(\ez_n k)}{k m^{\gz n}}}\leq d\varlimsup_{u\downarrow0}u^{1-\gz}\omega(u).
\eeqlb
\end{lem}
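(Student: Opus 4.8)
\textbf{Proof plan for Lemma \ref{lemIL}.}

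The plan is to treat the two assertions \reff{Nega0} and \reff{Nega} separately, in both cases decomposing $E[Z_n^t L(\ez_n Z_n)] = E[Z_n^t L(\ez_n Z_n)1_{\{Z_n\le \dz m^n\}}] + E[Z_n^t L(\ez_n Z_n)1_{\{\dz m^n< Z_n< A m^n\}}] + E[Z_n^t L(\ez_n Z_n)1_{\{Z_n\ge A m^n\}}]$ for small $\dz$ and large $A$, and controlling each piece with the local bound \reff{local}, the global limit theorem \reff{global}--\reff{Wcon1}, and the Potter-type bound \reff{regular}. For \reff{Nega0} with $-\gz<t<1$, first I would handle the ``bulk'' term $\dz m^n< Z_n< Am^n$: on this event $\ez_n Z_n \asymp \ez_n m^n\rar\infty$, so $L(\ez_n Z_n)/L(\ez_n m^n) = L(\ez_n m^n \cdot W_n)/L(\ez_n m^n)\rar 1$ uniformly for $W_n$ in the compact set $[\dz, A]$ by uniform convergence of slowly varying functions (here the hypothesis that $L$ is bounded away from $0$ and $\infty$ on compacts when $\ez_n\to 0$, or just ordinary slow variation when $\ez_n$ stays bounded away from $0$, makes the ratio well-behaved). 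Hence, dividing by $m^{nt}L(\ez_n m^n)$ and using \reff{Wcon}--\reff{Wcon1}, this term converges to $\int_\dz^A u^t\omega(u)\,du$, which is close to $\int_0^\infty u^t\omega(u)\,du$ when $\dz$ is small and $A$ large (finiteness of this integral for $t>-\gz$ comes from \reff{omega} and Theorem 8.12.7 in \cite{BGT89}, as already noted).

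The error terms are where the real work lies. For the large-$Z_n$ tail, $E[Z_n^t L(\ez_n Z_n)1_{\{Z_n\ge Am^n\}}]$: write $L(\ez_n Z_n) = L(\ez_n m^n)\cdot \frac{L(\ez_n m^n W_n)}{L(\ez_n m^n)}$ and bound the ratio using \reff{regular} by $C_\eta (W_n^\eta \vee W_n^{-\eta})$ with $\eta$ chosen so small that $t+\eta<1$; then the term is $\le C m^{nt}L(\ez_n m^n) E[W_n^{t+\eta}1_{\{W_n\ge A\}}]$, which by \reff{Wcon1} tends to $C\int_A^\infty u^{t+\eta}\omega(u)\,du\rar 0$ as $A\rar\infty$. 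For the small-$Z_n$ tail I would split further into $1\le Z_n\le K$ (finitely many $k$, handled crudely using \reff{local}: $\sum_{k\le K} k^t L(\ez_n k) P(Z_n=k)\le C\sum_{k\le K}k^{t+\gz-1}L(\ez_n k) m^{-\gz n}$, which is $o(m^{nt}L(\ez_n m^n))$ since $m^{-\gz n}/[m^{nt}L(\ez_n m^n)]\rar 0$ — here one uses $\gz>-t$ together with Potter bounds to compare $L(\ez_n k)$ and $L(\ez_n m^n)$) and $K< Z_n\le \dz m^n$, where again \reff{local} gives $P(Z_n=k)\le C k^{\gz-1}m^{-\gz n}$, so the term is $\le C m^{-\gz n}\sum_{K<k\le \dz m^n} k^{t+\gz-1}L(\ez_n k)$; substituting $k = u m^n$ and comparing $L(\ez_n um^n)$ with $L(\ez_n m^n)$ via \reff{regular}, this is $\le C m^{nt}L(\ez_n m^n)\int_0^\dz u^{t+\gz-1+\eta}\,du$ (Riemann-sum/integral comparison), which vanishes as $\dz\rar 0$ provided $t+\gz+\eta>0$, true for small $\eta$. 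Combining, letting $n\to\infty$ then $\dz\to 0$, $A\to\infty$, $K\to\infty$ gives \reff{Nega0}.

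For \reff{Nega}, the exponent is exactly $t=-\gz$, so the integral $\int_0^\infty u^{-\gz}\omega(u)\,du$ diverges (the integrand behaves like $u^{-1}$ near $0$ by \reff{omega}), which is precisely why the normalization changes to $\sum_{1\le k\le m^n} L(\ez_n k)/(k m^{\gz n})$ and why one only gets upper and lower bounds with the constant $d$ (periodicity of the support of $Z_n$). Here I would follow the same decomposition but recognize that the dominant contribution is the small-$Z_n$ range, where \reff{local} is sharp: by the refined local limit behavior $P(Z_n=k)\sim d\, q_k m^{-\gz n}$ with $q_k\sim q\, k^{\gz-1}$ (cf. \cite{A94}, \cite{NV03}) — or more robustly, by sandwiching $P(Z_n = k)$ between $c\, k^{\gz-1}m^{-\gz n}$ and $C\,k^{\gz-1}m^{-\gz n}$ uniformly and using $\varliminf/\varlimsup$ of $u^{1-\gz}\omega(u)$ to get the constants — one shows $E[Z_n^{-\gz}L(\ez_n Z_n)]$ is comparable to $m^{-\gz n}\sum_{1\le k\le m^n} k^{-1}L(\ez_n k)$ with the stated upper/lower constants $d\varlimsup$ and $d\varliminf$ of $u^{1-\gz}\omega(u)$, while the bulk term $\dz m^n<Z_n<Am^n$ and the tail $Z_n\ge Am^n$ contribute only $O(m^{-\gz n})$, which is negligible compared to $m^{-\gz n}\sum_{k\le m^n}k^{-1}L(\ez_n k)\asymp m^{-\gz n}L(m^n)\log m^n\to\infty$ relative to a bounded quantity. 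The main obstacle throughout is handling the slowly varying factor $L(\ez_n Z_n)$ uniformly in $k$ over a range that grows like $m^n$ while $\ez_n$ itself may drift to $0$ or $\infty$: the Potter bounds \reff{regular} must be applied carefully so that the gained $\eta$-power is absorbed into the convergent tail integrals, and the interchange of the $n\to\infty$ limit with the sum/integral over the small-$Z_n$ range requires the local estimate \reff{local} to be genuinely summable against $k^{t}L(\ez_n k)$ — this is the delicate accounting that replaces the bounded-variance arguments of \cite{FW08}.
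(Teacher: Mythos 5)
For \reff{Nega0} your argument is essentially the paper's: the same three-way split of $Z_n$ into $\{Z_n\le\dz m^n\}$, $\{\dz m^n<Z_n<Am^n\}$, $\{Z_n\ge Am^n\}$, the same use of the Potter-type bound \reff{regular} to absorb $L(\ez_n Z_n)/L(\ez_n m^n)$ into a small power of $W_n$, and then \reff{Wcon}--\reff{Wcon1}. The only slip is the sign of $\eta$ in the small-$Z_n$ piece: since $k/m^n<1$ there, \reff{regular} gives $L(\ez_n k)\le C L(\ez_n m^n)(k/m^n)^{-\eta}$, so after inserting \reff{local} the Riemann sum is comparable to $m^{nt}L(\ez_n m^n)\int_0^\dz u^{t+\gz-1-\eta}\,du$ and the requirement is $\eta<t+\gz$ (not $t+\gz+\eta>0$, which is vacuous). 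The argument still works because $t>-\gz$, so this is a fixable slip rather than a flaw.

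For \reff{Nega} there are genuine gaps. The paper's key tool on the intermediate range $k_n\le k\le\dz m^n$ (with $k_n\rar\infty$, $k_n=o(m^n)$) is the \emph{uniform} local limit theorem of Corollary~5 in \cite{FW07}, which gives $P(Z_n=k)=(1+o(1))\,d\,m^{-n}\omega(k/m^n)$ uniformly in that range; sandwiching $\omega(k/m^n)$ between $\inf_{u\le\dz}u^{1-\gz}\omega(u)\cdot(k/m^n)^{\gz-1}$ and the corresponding $\sup$ is exactly what produces the constants $d\varliminf_{u\downarrow0}u^{1-\gz}\omega(u)$ and $d\varlimsup_{u\downarrow0}u^{1-\gz}\omega(u)$. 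The pointwise limits $P(Z_n=k)m^{\gz n}\rar q_k$ and the asymptotics of $q_k$ that you cite from \cite{A94}, \cite{NV03} are fixed-$k$ statements and do not give the needed uniformity over a range growing with $n$; and your ``more robust'' alternative of sandwiching $P(Z_n=k)$ between $c\,k^{\gz-1}m^{-\gz n}$ and $C\,k^{\gz-1}m^{-\gz n}$ cannot yield the sharp constants, and the lower bound is not even available (\reff{local} is only an upper bound, and $P(Z_n=k)=0$ off a residue class mod $d$). Your negligibility claim for the bulk/tail terms, ``$O(m^{-\gz n})$ versus $\sum_{k\le m^n}k^{-1}L(\ez_n k)\asymp L(m^n)\log m^n$'', is also not rigorous for general slowly varying $L$ (and the argument of $L$ should be $\ez_n m^n$, not $m^n$); the paper instead runs a Fatou argument: $W_n^{-\gz}\tfrac{L(\ez_n m^n W_n)}{L(\ez_n m^n)}\ind_{\{W_n\le\dz\}}\rar W^{-\gz}\ind_{\{W\le\dz\}}$ a.s.\ with infinite expectation by \reff{omega}, so $I_0+I_1$ dominates $m^{-\gz n}L(\ez_n m^n)$, while $I_2\sim m^{-\gz n}L(\ez_n m^n)\int_\dz^\infty u^{-\gz}\omega(u)\,du$. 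Finally you do not treat the low boundary $\{k<k_n\}$ at all, whereas the paper needs \reff{local} plus a judicious choice of $k_n$ satisfying \reff{I01} to make $I_0/I_1\rar0$.
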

\begin{proof} We first prove (\ref{Nega0}). Recall $W_n=Z_n/m^n.$ Note that
\beqlb\label{ILa}E\l[Z_n^{t}L(\ez_nZ_n)\r]
=m^{nt}L(\ez_nm^n)E\l[(W_n)^{t}\frac{L(\ez_nm^nW_n)}{L(\ez_nm^n)}\r].\eeqlb
 Then for $0<\dz<1<A$,
%\beqnn
%\ar\ar E\l[(W_n)^{t}\frac{L(m^nW_n)}{L(m^n)}1_{\{W_n>A\}}\r]\cr
%\ar\ar\quad=E\l[(W_n)^{t}\frac{L(m^nW_n)}{L(m^n)}\cdot1_{\{0<W_n<\dz %\}}\r]+E\l[(W_n)^{t}\frac{L(m^nW_n)}{L(m^n)}\cdot1_{\{\dz \leq W_n\leq A\}}\r]
%   +E\l[(W_n)^{t}\frac{L(m^nW_n)}{L(m^n)}\cdot1_{\{W_n>A \}}\r]\cr
 %  \ar\ar\quad=I_1+I_2+I_3.
%\eeqnn
by  (\ref{regular}) and (\ref{Wcon}),  we have for some $0<\eta<\gz$ small enough, \beqlb\label{IL1}
E\l[(W_n)^{t}\frac{L(\ez_nm^nW_n)}{L(\ez_nm^n)}1_{\{W_n<\dz\}}\r]\leq CE[(W_n)^{t-\eta}1_{\{W_n<\dz\}}]=(1+o(1))C\int_0^{\dz}u^{t-\eta}\omega(u)du.
\eeqlb
Meanwhile by Dominated Convergence Theorem, we have
\beqlb\label{IL2}E\l[(W_n)^{t}\frac{L(\ez_nm^nW_n)}{L(\ez_nm^n)}\cdot1_{\{\dz \leq W_n\leq A\}}\r]\rar \int_{\dz}^Au^t\omega(u)du.\eeqlb
Finally,  using (\ref{regular}) with $\eta=1-t$, we have %as $n\rar\infty$
\beqlb\label{IL}
E\l[(W_n)^{t}\frac{L(\ez_nm^nW_n)}{L(\ez_nm^n)}1_{\{W_n>A\}}\r]\leq CE[W_n1_{\{W_n>A\}}]=(1+o(1))C\int_A^{\infty}u\omega(u)du.
\eeqlb
Letting $\dz\rar0$ and $A\rar\infty$, together with (\ref{ILa}),  we obtain (\ref{Nega0}). %when $-\gz< t<1,$

 The sequel of this proof is devoted to  (\ref{Nega}).  Let $\{k_n\}$ be a sequence such that $k_n\rar\infty$ and $k_n=o(m^n).$
Then for any $0<\dz\leq 1$,
$$
E[Z_n^{-\gz}L(\ez_nZ_n)]=\l(\sum_{k<k_n}+\sum_{k_n\leq k\leq\dz m^n}+\sum_{k> \dz m^n}\r)\frac{L(\ez_n k)}{k^{\gz}}P(Z_n=k)=:I_0+I_1+I_2.
$$
By Corollary 5 in \cite{FW07},  we have
$$I_1=(1+o(1))d\sum_{k_n\leq k\leq\dz m^n}\frac{L(\ez_n k)}{k^{\gz}} {m^{-n}}\omega\l(\frac{k}{m^n}\r)$$
which is larger than
\beqnn
(1+o(1))d\inf_{u\leq \dz}u^{1-\gz}\omega(u) \sum_{k_n\leq k\leq\dz m^n}\frac{L(\ez_n k)}{km^{\gz n}}
\eeqnn
and less than
\beqnn
(1+o(1))d\sup_{u\leq \dz}u^{1-\gz}\omega(u) \sum_{k_n\leq k\leq\dz m^n} \frac{L(\ez_n k)}{km^{\gz n}}.
\eeqnn
On the other hand, Dominated Convergence Theorem, together with (\ref{regular}), tells us $$I_2\sim m^{-\gz n}L(\ez_nm^n)\int_{\dz}^{\infty}u^{-\gz}\omega(u)du.$$
And we have
$$
\frac{Z_n^{-\gz}L(\ez_nZ_n)}{m^{-\gz n}L(\ez_nm^n)}1_{\{Z_n\leq \dz m^n\}}\overset{a.s.}{\rar} W^{-\gz}1_{\{W\leq \dz\}}
$$
whose expectation is infinite by (\ref{omega}). Then Fatou's lemma yields \beqlb\label{I2}\limsup_{n\rar\infty}I_2/(I_0+I_1)=0.\eeqlb By (\ref{local}), we also have
\beqlb\label{I0}
I_0\leq \sum_{k<k_n}\frac{L(\ez_n k)}{km^{\gz n}}.
\eeqlb
Then one may choose $k_n$ such that
\beqlb\label{I01}\frac{\sum_{k<k_n}\frac{L(\ez_n k)}{k}}{ \sum_{k< m^n}\frac{L(\ez_n k)}{k}}\rar0.\eeqlb
 %applying (\ref{}) with $\eta=1$ implies
%\beqnn
%I_2&\leq& m^{-n\gz}\sum_{k>\dz m^n}L(\ez_nk)P(Z_n=k)\cr\ar\leq\ar  C_{\eta}m^{-n\gz}L(\dz\ez_nm^n)\sum_{k>\dz m^n}\frac{k}{m^n}P(Z_n=k)\leq C_{\eta}m^{-n\gz}L(\dz \ez_nm^n)
%\eeqnn
Meanwhile, one can also deduce that
\beqnn
(1+o(1))d\inf_{u\leq \dz}u^{1-\gz}\omega(u) \sum_{\dz m^n\leq k\leq m^n}\frac{L(\ez_n k)}{km^{\gz n}}
\ar\leq \ar E[{Z_n^{-\gz}L(\ez_nZ_n)}1_{\{\dz m^n\leq Z_n\leq  m^n\}}]\cr\ar\sim\ar m^{-\gz n}L(\ez_nm^n)\int_{\dz}^{1}u^{-\gz}\omega(u)du,
\eeqnn
which, together with (\ref{I0}), (\ref{I01}) and (\ref{I2}), gives
%$$ \sum_{k_n\leq k\leq \dz m^n}\frac{L(\ez_n k)}{k m^{\gz n}}\sim \sum_{k_n\leq k\leq  m^n}\frac{L(\ez_n k)}{k m^{\gz n}}.$$
 $ \limsup_{n\rar\infty}I_0/I_1=\limsup_{n\rar\infty}I_2/I_1=0.$
Thus
\beqnn
d\inf_{u<\dz}u^{1-\gz}\omega(u)\ar\leq \ar \varliminf_{n\rar\infty}\frac{E[Z_n^{-\gz}L(\ez_nZ_n)]}{ \sum_{1\leq k\leq m^n}\frac{L(\ez_n k)}{k m^{\gz n}}}\cr\ar\leq\ar\varlimsup_{n\rar\infty}\frac{E[Z_n^{-\gz}L(\ez_nZ_n)]}{ \sum_{1\leq k\leq m^n}\frac{L(\ez_n k)}{k m^{\gz n}}}\leq d\sup_{u<\dz}u^{1-\gz}\omega(u)
\eeqnn
holds for any $\dz>0$. Letting $\dz\rar0$ implies (\ref{Nega}). We have completed the proof.
\end{proof}
\begin{remark}\label{remNN}
Lemma \ref{lemIL} could be compared with Theorem 1 in \cite{NV03} where $L=1$. Under the assumption $E[Z_1\ln Z_1]<\infty$, when $-\gamma<t<0$, our result completes the one in \cite{NV03}. However, when $t=-\gamma$, a precise limit is obtained in \cite{NV03}.
\end{remark}

%\section{$S_{Z_n}/Z_n$}

\section{Proofs}\label{SecPro}

We  only prove Theorems \ref{main2}, \ref{main3}, \ref{SCmain3} and \ref{mainx}. The ideas to prove Theorems \ref{Cmain2} and \ref{Cmain3} are similar to Theorems \ref{main2} and  \ref{main3}, respectively. We omit details here.
%The proof of Theorem \ref{main1} is similar to Theorem \ref{main2}.  The proof of %Theorem \ref{mainmx} is similar to Theorem \ref{mainx}.
%\subsection{The case of $\az=1$} To be continued.
\subsection{Proof of Theorem \ref{main2}}

%\subsection{The case of $\ez_n/ m^{n(1-\az)/\az}\rar\infty$}
\begin{lemma}\label{na5} Assume that $0<\az<1$. If $\gamma>\beta-1$,  $\ez_nm^nb(m^n)^{-1}\rar+\infty$ and $\ez_n\rar+\infty$, then there exits $\eta>0$ small enough such that for any $0<\dz<1<A,$
\beqlb\label{na51}
\limsup_{n\rar\infty}\frac{\ez_n^{\beta}(m^n)^{(\beta-1)}}{L(\ez_nm^n)}\sum_{k\leq \dz m^n} P(Z_n=k)P(S_k\geq k\ez_n)\ar\leq\ar C\dz^{\gz-\bz+1-\eta};\\%\dz^{1+\gz-\bz}+C\int_0^{\dz}u^{1-\bz}\omega(u)du1_{\{\bz\leq1\}},\\
\label{na52}
\limsup_{n\rar\infty}\frac{\ez_n^{\beta}(m^n)^{(\beta-1)}}{L(\ez_nm^n)}\sum_{k\geq A m^n} P(Z_n=k)P(S_k\geq k\ez_n)
\ar\leq\ar C\int_A^{\infty}u\omega(u)du.
\eeqlb
\end{lemma}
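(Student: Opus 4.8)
The plan is to split the range $k\le\dz m^n$ further and bound $P(S_k\ge k\ez_n)$ by the Fuk--Nagaev inequality \reff{Na1a}. For the first estimate \reff{na51}, I would apply \reff{Na1a} with a parameter $r$ chosen large enough (so that the second term $c_r\ez_n^{-\bz r}k^{(1-\bz)r}$ is negligible relative to the first when summed against $P(Z_n=k)$), getting two contributions. The main term is $\sum_{k\le\dz m^n}P(Z_n=k)\,kP(X_1\ge r^{-1}\ez_n k)$. Using $P(X_1\ge x)\sim x^{-\bz}L(x)$ and the uniform bound \reff{regular} on ratios of the slowly varying function, this is comparable to $\sum_{k\le\dz m^n}P(Z_n=k)\,k\cdot(\ez_n k)^{-\bz}L(\ez_n k)=\ez_n^{-\bz}E\big[Z_n^{1-\bz}L(\ez_n Z_n)\,1_{\{Z_n\le\dz m^n\}}\big]$. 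Since $\gz>\bz-1$, we have $1-\bz>-\gz$, so I can invoke the truncated version of \reff{Nega0} (equivalently, run the argument in Lemma~\ref{lemIL} but with the indicator $1_{\{W_n<\dz\}}$, using \reff{IL1} with a small $\eta$): this gives a bound of order $m^{n(1-\bz)}L(\ez_n m^n)\int_0^\dz u^{1-\bz-\eta}\omega(u)\,du$, and since $\omega(u)\asymp u^{\gz-1}$ near $0$ by \reff{omega}, the integral is $O(\dz^{\gz-\bz+1-\eta})$ — exactly the claimed right-hand side. The leftover term from \reff{Na1a} is handled by a crude bound: $\sum_k P(Z_n=k)\ez_n^{-\bz r}k^{(1-\bz)r}$, and using \reff{local} plus $\ez_n\to\infty$ and $\ez_n m^n b(m^n)^{-1}\to\infty$ one checks that, for $r$ large, this is $o\big(m^{n(1-\bz)}L(\ez_n m^n)\big)$, hence absorbed.

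For the tail estimate \reff{na52}, the situation is easier because on $\{k\ge Am^n\}$ with $A>1$ large we can afford cruder bounds. I would again use \reff{Na1a}; the dominant piece is $\sum_{k\ge Am^n}P(Z_n=k)\,kP(X_1\ge r^{-1}\ez_n k)\asymp \ez_n^{-\bz}E\big[Z_n^{1-\bz}L(\ez_n Z_n)1_{\{Z_n>Am^n\}}\big]$, and since $1-\bz\le1$ one can use the convergence \reff{Wcon1} (with $t=1-\bz$, noting $1-\bz<1$), combined with \reff{regular} applied with exponent $\eta=1-(1-\bz)=\bz$ to control $L(\ez_n m^n W_n)/L(\ez_n m^n)$ by $W_n$ on $\{W_n>A\}$; this yields the bound $C\,m^{n(1-\bz)}L(\ez_n m^n)\int_A^\infty u\,\omega(u)\,du$, which after multiplying by $\ez_n^{\bz}(m^n)^{\bz-1}/L(\ez_n m^n)$ gives precisely $C\int_A^\infty u\,\omega(u)\,du$. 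The residual term $c_r\ez_n^{-\bz r}k^{(1-\bz)r}$ summed over $k\ge Am^n$ against $P(Z_n=k)\le C/k$ is again shown to be negligible for $r$ chosen appropriately.

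The main obstacle I anticipate is the bookkeeping around the slowly varying function: one must repeatedly replace $L(\ez_n k)$ by $L(\ez_n m^n)$ times a controlled factor, and the allowable exponents in \reff{regular} interact delicately with the constraint $\gz>\bz-1$ (which is what makes the exponent $1-\bz$ lie strictly above $-\gz$ and hence makes the relevant integrals against $\omega$ converge near $0$). A secondary point requiring care is verifying that the parameter $r$ in the Fuk--Nagaev inequality \reff{Na1a} can be chosen uniformly (independent of $n$) so that the polynomial-in-$k$ error term is genuinely lower order after summation; this uses both hypotheses $\ez_n\to\infty$ and $\ez_n m^n/b(m^n)\to\infty$, together with \reff{local}. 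Once these uniform estimates are in place, letting $\eta\downarrow0$ is immediate and the two displays follow.
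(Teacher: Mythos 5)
Your proof follows the same strategy as the paper's: split via the Fuk--Nagaev bound \reff{Na1a} into a main term $\sum_k P(Z_n=k)\,kP(X_1\ge r^{-1}\ez_nk)$ and a polynomial residual, control the main term through the local estimate \reff{local} together with \reff{regular} and $\omega(u)\asymp u^{\gz-1}$, and show the residual is negligible using $\ez_n\to\infty$ and $\ez_n m^n b(m^n)^{-1}\to\infty$. Your phrasing of the main-term estimate as a ``truncated version of \reff{Nega0}'' is precisely what the paper does by summing against \reff{local}, and the bound $\int_0^{\dz}u^{1-\bz-\eta}\omega(u)\,du=O(\dz^{\gz-\bz+1-\eta})$ using $\gz>\bz-1$ is the crux in both.

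One small slip: to control the residual over $k\ge Am^n$ in \reff{na52} you pair $c_r\ez_n^{-\bz r}k^{(1-\bz)r}$ with $P(Z_n=k)\le C/k$. For $\bz<1$ the resulting exponent $(1-\bz)r-1$ exceeds $-1$, so that sum diverges. The fix (implicit in the paper's displayed bound) is to take $1<r\le 1/(1-\bz)$ so that $(1-\bz)r\le 1$ and bound
\[
\sum_{k\ge Am^n}P(Z_n=k)\,k^{(1-\bz)r}\le (Am^n)^{(1-\bz)r-1}\,E\big[Z_n 1_{\{Z_n>Am^n\}}\big]\le (Am^n)^{(1-\bz)r-1}\,m^n,
\]
and then verify, as in \reff{o1}, that the prefactor kills the remaining power of $m^n$. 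For $\bz\ge1$ one should instead take $t=1$ in \reff{Na1a} so that the residual is $c_r\ez_n^{-r}$, independent of $k$, and the $k$-sum is trivial. With these minor repairs your argument coincides with the paper's.
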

\begin{proof} We first prove (\ref{na51}).  Consider the case of $\bz<1$.  Applying (\ref{regular}) with $0<\eta<\gz-\bz+1$, together with (\ref{Na1a}) and  (\ref{local}), gives
\beqlb
\ar\ar\sum_{k\leq \dz m^n} P(Z_n=k)P(S_k\geq k\ez_n)
\cr\ar \ar\quad\leq C\sum_{k\leq \dz m^n}P(Z_n=k)\l(kP(X_1\geq
                    r^{-1}\ez_nk)+k^{(1-\bz)r}\ez_n^{-\bz r }\r)
\cr\ar\ar\quad\leq C\l(L(\ez_nm^n)\ez_n^{-\bz }(m^n)^{1-\bz}\dz^{\gz-\bz+1-\eta}
+\dz^{(1-\bz)r+\gz}(m^n)^{(1-\bz)r}\ez_n^{-\bz r}\r).
\eeqlb
Choosing $r>1$ and noting $\ez_nm^nb(m^n)^{-1}\rar+\infty$, one can check that
\beqlb\label{o1}(m^n)^{(1-\bz)r}\ez_n^{-\bz r} \frac{L(\ez_nm^n)}{\ez_n^{\beta}(m^n)^{(\beta-1)}}=o(1).\eeqlb
%where the second  inequality follows from the monotonicity of
%$L$ and (\ref{boundL}).
Then (\ref{na51}) follows readily if $\bz<1$. The case of  $\bz\geq1$ can be proved similarly by applying (\ref{Na1a}) again with $
 r=\frac{\az\bz}{1-\az}+\bz+1
 $ and $(1-t)r=1$.

Similar reasonings also yields (\ref{na52}) by applying (\ref{regular}) with $\eta=\beta$. %implies
%\beqlb\ar\ar\sum_{k\leq \dz m^n} P(Z_n=k)P(S_k\geq k\ez_n)\cr \ar\ar\quad\leq C\sum_{k\leq \dz m^n}P(Z_n=k)\left(k P(X_1\geq r^{-1}\ez_nk)+ (\ez_n)^{-tr }k\right)
%\cr\ar\ar\quad\leq
%C\sum_{k\leq \dz m^n}P(Z_n=k)\left(\ez_n^{-\beta}k^{1-\beta}L(r^{-1}\dz\ez_nm^n)+ (\ez_n)^{-tr }k\right)\cr\ar\ar\quad\leq
%C\sum_{k\leq \dz m^n}\ez_n^{-\beta}k^{\gz-\beta}m^{-\gz n}L(r^{-1}\dz\ez_nm^n)+ C\sum_{k\leq \dz m^n}P(Z_n=k)\ez_n^{-tr }k\cr
%\ar\ar\quad\leq C \dz^{1+\gz-\bz}\ez_n^{-\bz }m^{n(1-\bz)}L(\ez_nm^n)+C(1+o(1))\ez_n^{-tr }m^n\int_0^{\dz}u\omega(u)du,
%\eeqlb
%where in the third inequality we use (\ref{local}). Note that $L$ is slowly varying and $\ez_nm^nb(m^n)^{-1}\rar\infty$. Then $\liminf_{x\rar+\infty}s(x)>0$ implies
%\beqlb\label{na53}\frac{\ez_n^{\beta}(m^n)^{(\beta-1)}}{L(\ez_nm^n)}(\ez_n)^{-tr }m^n
%=\frac{\ez_n^{-\frac{\az\bz}{1-\az}}m^{\bz n}}{L(\ez_nm^n)}
%=\frac{ (\ez_nm^nb(m^n)^{-1})^{\frac{\az\bz}{\az-1}}s(m^n)^{\frac{\az\bz}{\az-1}}}{L(\ez_nm^n)}
%=o(1).\eeqlb Thus (\ref{na51}) follows readily.
 %By the same reasonings as in (\ref{IL})  we have
%\beqlb\label{na54}
%I_L:=E\l[(W_n)^{1-\bz}\frac{L(\ez_nm^nW_n)}{L(\ez_nm^n)}1_{\{W_n>A\}}\r]\leq (1+o(1))C_{\bz}\int_A^{\infty}u\omega(u)du,\quad \bz>0.
%\eeqlb
In fact, if $\bz<1$,   (\ref{Na1a}) and (\ref{local})   imply
\beqlb
\ar\ar\sum_{k\geq A m^n} P(Z_n=k)P(S_k\geq k\ez_n)\cr
%\ar\leq\ar C\ez_n^{-\bz }\sum_{k\geq A m^n}P(Z_n=k)(L(\ez_n k)+1)k^{1-\bz}\cr
\ar\ar\quad\leq  C(1+o(1))L(\ez_nm^n)\ez_n^{-\bz }m^{n(1-\bz)}\int_A^{\infty}u\omega(u)du+CA^{(1-\bz)r}(m^n)^{(1-\bz)r}\ez_n^{-\bz r},
\eeqlb
which, together with (\ref{o1}), proves (\ref{na52}) in the case of $\bz<1$.
Applying (\ref{regular}), (\ref{Na1a}) and  (\ref{local}) suitably also proves the case of  $\bz\geq1$. We omit the details here.
\end{proof}

\begin{lem}\label{na5b}
 Assume that $\gamma>\beta-1$, $\ez_nm^nb(m^n)^{-1}\rar +\infty$ and $\ez_n\rar+\infty$.  Then there exits $\eta>0$ small enough such that for any $0<\dz<1<A,$
\beqlb\label{Ia}
&&\limsup_{n\rar\infty}\bigg{|}m^{(\bz-1)n}\ez_n^{\beta}L(\ez_nm^n)^{-1}\sum_{ k=\dz m^n}^{Am^n} P( S_k\geq \ez_n k )P(Z_n=k)-I_{\bz}\bigg{|}\cr&&\qquad\leq C\l(\int_A^{\infty}u\omega(u)du+\dz^{\gamma-\bz+1-\eta}\r).
\eeqlb
\end{lem}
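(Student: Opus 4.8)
The plan is to reduce the claim $(\ref{Ia})$ to two facts: that the normalized central sum converges to $\int_{\dz}^{A}u^{1-\bz}\omega(u)\,du$, and that this truncated integral differs from $I_{\bz}$ by at most the two displayed error terms. Fix $\eta\in(0,\gz-\bz+1)$. On the band $\dz m^n\le k\le Am^n$ the index $k$ is of order $m^n$, so $\ez_n k\rar\infty$ (because $\ez_n\rar\infty$), and, since $b$ is regularly varying of index $1/\az$ and $\ez_n m^n b(m^n)^{-1}\rar\infty$, also $\ez_n k/b(k)\rar\infty$ \emph{uniformly} in $k$ on that band. Hence each such $k$ lies in the big-jump zone of the walk $(S_j)$, and a one-big-jump estimate gives, uniformly in $k\in[\dz m^n,Am^n]$,
\[
P(S_k\ge\ez_n k)=(1+o(1))\,kP(X_1\ge\ez_n k)=(1+o(1))\,\ez_n^{-\bz}k^{1-\bz}L(\ez_n k).
\]
For the upper inequality I would use $(\ref{Na1a})$ with $r$ close to $1$, whose correction term is $o\big(\ez_n^{-\bz}(m^n)^{1-\bz}L(\ez_nm^n)\big)$ by the computation in $(\ref{na51})$--$(\ref{o1})$, or, alternatively, the subexponential large-deviation results of \cite{Na79,DDS08}; for the lower inequality, an inclusion--exclusion bound on the event that one $X_i$ exceeds $\ez_n k+M_k$ while $S_{k-1}\ge -M_k$, with $b(k)\ll M_k\ll\ez_n k$, suffices. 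The last equality is the uniform convergence theorem for regularly varying functions, available since $\ez_n\dz m^n\rar\infty$ and $\ez_nAm^n/(\ez_n\dz m^n)=A/\dz$.

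Substituting this into the sum and rescaling by $W_n=m^{-n}Z_n$ (so that $k=m^nW_n$ on $\{Z_n=k\}$, and $k^{1-\bz}L(\ez_n k)=m^{(1-\bz)n}L(\ez_nm^n)\,W_n^{1-\bz}L(\ez_nm^nW_n)/L(\ez_nm^n)$) turns the left-hand side of $(\ref{Ia})$, restricted to the band, into
\begin{align*}
&m^{(\bz-1)n}\ez_n^{\bz}L(\ez_nm^n)^{-1}\sum_{\dz m^n\le k\le Am^n}P(S_k\ge\ez_n k)P(Z_n=k)\\
&\qquad=(1+o(1))\,E\Big[W_n^{1-\bz}\frac{L(\ez_nm^nW_n)}{L(\ez_nm^n)}1_{\{\dz\le W_n\le A\}}\Big].
\end{align*}
On $\{\dz\le W_n\le A\}$ the Potter bound $(\ref{regular})$ (applicable because $\ez_nm^n\dz\rar\infty$) dominates $L(\ez_nm^nW_n)/L(\ez_nm^n)$ by $C_\eta\max(A^{\eta},\dz^{-\eta})$, so the integrand is uniformly bounded; combined with $W_n\Rightarrow W$ and the continuity of $\omega$, dominated convergence shows the right-hand side tends to $\int_{\dz}^{A}u^{1-\bz}\omega(u)\,du$. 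This is the same computation as the one leading to $(\ref{IL2})$ in Lemma \ref{lemIL}, which I would invoke directly.

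It then remains to compare with $I_{\bz}=\int_0^\infty u^{1-\bz}\omega(u)\,du$ (see $(\ref{Ibz})$):
\[
\Big|\int_{\dz}^{A}u^{1-\bz}\omega(u)\,du-I_{\bz}\Big|=\int_0^{\dz}u^{1-\bz}\omega(u)\,du+\int_A^\infty u^{1-\bz}\omega(u)\,du.
\]
By $(\ref{omega})$ and the continuity of $\omega$, one has $u^{1-\bz}\omega(u)\le Cu^{\gz-\bz-\eta}$ for all $u\in(0,1)$, so the first integral is at most $C\dz^{\gz-\bz+1-\eta}$; this is precisely where the hypothesis $\gz>\bz-1$ (equivalently $I_{\bz}<\infty$) enters. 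For the second, $u^{1-\bz}\le u$ whenever $u\ge A>1$, so it is at most $\int_A^\infty u\omega(u)\,du$. Combining the conclusions of the last two paragraphs yields $(\ref{Ia})$.

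The main obstacle is the uniform one-big-jump asymptotic of the first paragraph, with constant exactly $1$ and a remainder uniform over $[\dz m^n,Am^n]$. Single-$k$ subexponential tail asymptotics are classical, but here $k\rar\infty$, the index $\bz$ may be below $1$ (so $X_1$ need not have a mean), and when $p_+=0$ the positive tail is strictly lighter than the stable index ($\bz>\az$); one therefore has to verify carefully that $\ez_n k\gg b(k)$ is the only threshold in play and that the error is uniform in $k$ — this is exactly what $(\ref{Na1a})$/$(\ref{o1})$ and \cite{Na79,DDS08} are for. Everything after that is the harmonic-moment bookkeeping of Lemma \ref{lemIL} together with the elementary estimate $(\ref{omega})$ on $\omega$ near $0$.
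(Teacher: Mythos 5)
Your proof is correct, and it takes a slightly different route from the paper's. Both start from the same uniform big-jump asymptotic $P(S_k\ge\ez_n k)=(1+o(1))kP(X_1\ge\ez_n k)$ on the band $[\dz m^n,Am^n]$ (from \cite{DDS08,CH98}/\cite{Na79}), and both rewrite the band sum as an expectation of $W_n^{1-\bz}L(\ez_n m^n W_n)/L(\ez_n m^n)$. Where you diverge is in producing the two error terms: the paper invokes Lemma~\ref{lemIL} (with $t=1-\bz$) for the full-sum asymptotic $\sum_{k\ge1}\sim m^{(1-\bz)n}L(\ez_nm^n)I_\bz$ and then bounds the tail \emph{sums} over $k<\dz m^n$ and $k>Am^n$ via Potter bounds and~(\ref{local}), as in~(\ref{Ic}) and~(\ref{IcA}); you instead pass to the band \emph{integral} $\int_\dz^A u^{1-\bz}\omega(u)\,du$ by dominated convergence (essentially~(\ref{IL2})) and bound the tail integrals $\int_0^\dz$ and $\int_A^\infty$ directly using~(\ref{omega}) near $0$ and $u^{1-\bz}\le u$ for $u\ge1$. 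Your route is a bit more streamlined: by taking the $n\to\infty$ limit first, the slowly varying $L$ drops out of the tail estimates, so you avoid the Potter-bound bookkeeping of~(\ref{Ic})--(\ref{IcA}) altogether; the trade-off is that you must justify the DCT step on the band, which the paper's route sidesteps by anchoring on the already-proved Lemma~\ref{lemIL}. One place where you are a little light and the paper is more careful: verifying the hypotheses of the \cite{DDS08}/\cite{CH98} theorems requires showing $m^nF(-\ez_n m^n)\to0$ (or $m^n(1-F(\ez_n m^n))\to0$ when $\az=\bz$), which the paper does by writing $m^nF(-\ez_n m^n)=\frac{m^n}{b^{-1}(\ez_nm^n)}\,b^{-1}(\ez_nm^n)F(-\ez_n m^n)$ and using~(\ref{LB}); ``$\ez_n k\gg b(k)$'' alone is not quite the stated condition, so you should supply this check.
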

\begin{proof}
 Using Theorem 9.3 in \cite{DDS08} for $\az<\bz$ and using Theorem 3.3 in \cite{CH98} for $\az= \bz$,  we have  that
$$
\lim_{n\rar\infty}\sup_{x\geq x_n }\bigg{|}\frac{P(S_n\geq x)}{nP(X_1\geq x)}-1\bigg{|}=0.
$$
holds for any $x_n$ satisfying  $nF(-x_n)=o(1)$ if $\az<\bz$ or $n(1-F(x_n))=o(1)$ if $\az= \bz$.
Since $\ez_nm^nb(m^n)^{-1}\rar \infty,$ we have %as $n\rar\infty$
$m^nF(-\ez_n m^n)=o(1)$ if $\az<\bz$ and $m^n(1-F(\ez_nm^n))=o(1)$ if $\az=\bz$.
 In fact, if $\az<\bz$, we could denote by $b^{-1}$ the inverse of $b$. Then $\ez_nm^nb(m^n)^{-1}\rar \infty$ implies $\frac{m^n}{b^{-1}(\ez_nm^n)}\rar0$ and hence by (\ref{LB})  we have
 \beqnn
 m^nF(-\ez_n m^n)\ar=\ar \frac{m^n}{b^{-1}(\ez_nm^n)} b^{-1}(\ez_nm^n) F(-\ez_n m^n)\rar0.
 \eeqnn
If $\az=\bz$, the argument is similar. Define $$\eta_n := \sup_{\dz m^n< k<A m^n} \sup_{x\geq \ez_n k }\bigg{|}\frac{P(S_k\geq x)}{kP(X_1\geq x)}-1\bigg{|}.$$
Then one can check that $\eta_n=o(1)$ as $n\rar\infty$. Thus as $n\rar\infty$,
\beqlb\label{Ib}
\sum_{ k=\dz m^n}^{Am^n} P(Z_n=k)P( S_k\geq \ez_n k )\ar=\ar(1+o(1))\sum_{ k=\dz m^n}^{Am^n} kP(Z_n=k)P(X_1\geq\ez_n k )\cr
\ar=\ar (1+o(1))\ez_n^{-\beta}\sum_{ k=\dz m^n}^{Am^n}L(\ez_nk) k^{1-\beta}P(Z_n=k)\cr
 \ar=\ar (1+o(1))\ez_n^{-\beta}\sum_{ k=\dz m^n}^{Am^n}L(\ez_nk) k^{1-\beta}P(Z_n=k).
\eeqlb
Meanwhile,  applying  (\ref{regular}) with some $0<\eta<\gz-\bz+1$ and  (\ref{local})  yields
\beqlb\label{Ic}
L(\ez_n m^n)^{-1}m^{(\bz-1)n}\sum_{k<\dz m^n}L(\ez_nk)k^{1-\beta}P(Z_n=k)\leq C\dz^{\gamma-\bz+1-\eta}
\eeqlb
and applying  (\ref{regular}) with $\eta=\bz$ and  (\ref{local})  gives
\beqlb\label{IcA}
L(\ez_n m^n)^{-1}m^{(\bz-1)n}\sum_{k> A m^n}L(\ez_nk)k^{1-\beta}P(Z_n=k)\leq(1+o(1)) C\int_A^{\infty}u\omega(u)du.
\eeqlb

 \noindent Thus by Lemma \ref{lemIL},   we have
\beqlb\label{Id}
\ar\ar\bigg{|} m^{(\bz-1)n}L(\ez_nm^n)^{-1}\sum_{ k=\dz m^n}^{Am^n}L(\ez_nk) k^{1-\beta}P(Z_n=k)- I_{\bz}\bigg{|}\cr\ar\ar\quad\leq (1+o(1)) C\l(\int_A^{\infty}u\omega(u)du+\dz^{\gamma-\bz+1-\eta}\r).
\eeqlb
Then by (\ref{Ib}), as $n\rar\infty$,
\beqnn
\ar\ar \bigg{|}m^{(\bz-1)n}\ez_n^{\beta}L(\ez_nm^n)^{-1}\sum_{k=\dz m^n}^{Am^n}P( S_k\geq \ez_n k )P(Z_n=k)-I_{\bz}\bigg{|}\cr
\ar\ar\quad= \bigg{|}(1+o(1))m^{(\bz-1)n} \sum_{k=\dz m^n}^{Am^n}L(\ez_nk)k^{1-\beta}P(Z_n=k)-I_{\bz}\bigg{|}\cr
\ar\ar\quad\leq (1+o(1))C\l(\int_A^{\infty}u\omega(u)du+\dz^{\gamma-\bz+1-\eta}\r).
\eeqnn
The desired result follows readily.
\end{proof}

{\bf Proof of Theorem \ref{main2}:}
Letting $\dz\rar0$ and $A\rar\infty$ in Lemmas (\ref{na5}) and (\ref{na5b}) gives the theorem. \qed
\bigskip

%\subsection{The case of $\ez_n/ m^{n(1-\az)/\az}\rar \tau\in(0,\infty)$}

\subsection{Proof of Theorem \ref{main3}}
Recall that $l(x)$ is an asymptotic inverse of $x\mapsto J(x)=xb(x)^{-1}$ and $l(\ez_n^{-1})=l_n$.
If $\az<\bz$, we may write \beqlb\label{repl}l(x)=x^{\frac{\az}{\az-1}}s'(x)\eeqlb for some slowly varying function $s'$.
Note that Assumption B implies that
\beqlb\label{liminfS2}
\liminf_{x\rar+\infty}s'(x)>0.
\eeqlb

\begin{lemma}\label{Na4aa}
Assume that $1<\az<2$, $p_+=0$, $\gamma>\beta-1$, $\ez_nm^nb(m^n)^{-1}\rar +\infty$  and $\ez_n\rar 0$. Then for any $0<\dz<1<A$,
\beqlb\label{main33a1}
\sum_{1\leq k\leq \dz {l_n}} P(Z_n=k)P(S_k\geq k\ez_n)\ar\leq\ar C\dz^{\gz} l_n^{\gz}m^{-\gz n},\\
\label{Na41}
\sum_{k\leq \dz m^n}P(Z_n=k)P(S_k\geq k\ez_n)\ar\leq \ar
C\dz^{\gz+1-\bz-\eta} \ez_n^{-\bz}m^{(1-\bz)n}L( \ez_nm^n)+ C l_n^{\gz}m^{-\gz n},\\
\label{Na42}
\sum_{k\geq  A m^n}P(Z_n=k)P(S_k\geq k\ez_n)\ar\leq\ar
C%\int_{A}^{\infty}u\omega(u)du
\ez_n^{-\bz}m^{(1-\bz)n}L(\ez_nm^n)+ C A^{-2\gz}l_n^{\gz}m^{-\gz n},
\eeqlb
 and for any $A$ large enough,
\beqlb\label{Na47}
\ar\ar\sum_{A {l_n}< k\leq A m^n}P(Z_n=k)P(S_k\geq k\ez_n)\cr\ar\ar\quad\leq
C(1+A^{\gz+1-\bz+\eta}) \ez_n^{-\bz}m^{(1-\bz)n}L(\ez_nm^n)+C A^{-2\gz} l_n^{\gz}m^{-\gz n}.
\eeqlb
\end{lemma}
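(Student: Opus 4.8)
The plan is to split the sum $\sum_k P(Z_n=k)P(S_k\ge k\ez_n)$ according to the size of $k$ relative to the two natural scales $l_n$ and $m^n$, and to estimate each block separately using the Fuk-Nagaev bound \reff{Na1a}, the local estimate \reff{local}, the regular-variation control \reff{regular} (together with \reff{repl}--\reff{liminfS2}), and the harmonic-moment asymptotics from Lemma \ref{lemIL}. Since $1<\az<2$ and $p_+=0$, the relevant exponent in \reff{Na1a} is $t\in(\az,1]\cap(\az,\bz)$; but $t\le 1<\az$ is impossible, so one actually needs the sharper tool: as in Lemma \ref{na5} one appeals to \cite{Na79} (Theorem 1.2 there, as flagged in the remark after Assumption B), picking the truncation level $r$ large, to get a bound of the shape $P(S_k\ge k\ez_n)\le kP(X_1\ge r^{-1}\ez_n k)+c_r\ez_n^{-tr}k^{(1-t)r}$ with $t$ close to $\az$. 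The first term is $\sim k^{1-\bz}L(\ez_n k)\ez_n^{-\bz}$ and is handled exactly as in Lemmas \ref{na5}, \ref{na5b}; the second term is a remainder that one shows is negligible after multiplying by the appropriate normalization, using $\ez_n m^n b(m^n)^{-1}\to\infty$ as in \reff{o1}.

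For \reff{main33a1}, the range $k\le\dz l_n$: here $\ez_n k\le \ez_n l_n\asymp b(l_n)=l_n^{1/\az}s(l_n)$, so $\ez_n k/b(k)$ is bounded and one cannot expect $P(S_k\ge k\ez_n)$ to be small; instead one crudely bounds $P(S_k\ge k\ez_n)\le 1$ and uses $P(Z_n=k)\le C k^{\gz-1}m^{-\gz n}$ from \reff{local}, so $\sum_{k\le\dz l_n}P(Z_n=k)\le C(\dz l_n)^{\gz}m^{-\gz n}$, which is the claimed $C\dz^{\gz}l_n^\gz m^{-\gz n}$. For \reff{Na41}, the range $k\le\dz m^n$: split further at $l_n$; the part $k\le\dz l_n$ is already done, and for $l_n< k\le\dz m^n$ one has $\ez_n k/b(k)\to\infty$, so the Fuk--Nagaev/large-deviation estimate applies and gives the main term $\sum_{l_n<k\le\dz m^n}k^{1-\bz}L(\ez_n k)m^{-\gz n}k^{\gz-1}\ez_n^{-\bz}$, which by \reff{regular} with a small $\eta$ and summation is $\le C\dz^{\gz+1-\bz-\eta}\ez_n^{-\bz}m^{(1-\bz)n}L(\ez_n m^n)$ (using $\gz+1-\bz>0$). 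The large-$k$ estimates \reff{Na42} and \reff{Na47} are the same computation run with $\eta=\bz$ or with a large $A$, splitting $\{k\ge Am^n\}$ via \reff{local}'s $P(Z_n=k)\le C/k$ to pick up $\int_A^\infty u\,\omega(u)\,du$-type tails (controlled by $A^{-2\gz}l_n^\gz m^{-\gz n}$ after bounding the stable tail $P(X_1\ge\ez_n k)$), and the intermediate range $Al_n<k\le Am^n$ combining both sides.

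The main obstacle I expect is the middle range near $k\asymp l_n$, where the transition between ``$P(S_k\ge k\ez_n)$ is of order $1$'' and ``$P(S_k\ge k\ez_n)\sim kP(X_1\ge k\ez_n)$'' happens. One must verify that the large-deviation uniformity from Theorems 9.2--9.3 in \cite{DDS08} (for $\az<\bz$) actually kicks in precisely once $\ez_n k b(k)^{-1}\to\infty$, i.e.\ for $k\gg l_n$, and that the contribution of $l_n\le k\le Al_n$ is genuinely of the order $l_n^\gz m^{-\gz n}$ and not larger; this is where the lower bound $\liminf_{x\to\infty}s'(x)>0$ from \reff{liminfS2} (a consequence of Assumption B) is used, to guarantee that $l_n$ is not itself distorted by the slowly varying part in a way that breaks the matching of exponents. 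A secondary technical point is making the remainder terms $\ez_n^{-tr}k^{(1-t)r}$ uniformly negligible across all the ranges simultaneously: one fixes $r$ once and for all large enough (depending only on $\az,\bz,\gz$) so that $(1-t)r<\gz$ fails appropriately and the power of $m^n$ works out, exactly as in the proof of \reff{na51}. Once these estimates are in place, letting first $n\to\infty$, then $\dz\downarrow0$ and $A\uparrow\infty$, assembles the four displays of the lemma.
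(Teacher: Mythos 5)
Your plan is essentially the paper's proof: bound $P(S_k\geq k\ez_n)\leq 1$ on $\{k\leq\dz l_n\}$ and use \reff{local}; on the remaining ranges apply a Nagaev truncation inequality with exponent $t=\az$ (the paper uses Corollary~1.6 of \cite{Na79}, displayed as \reff{Na1.6}--\reff{Na46}, \emph{not} Theorem~1.2, which is reserved for Step~2 of Lemma~\ref{Na4} where $\az=\bz=1$), combine with \reff{regular} and \reff{local}, and control the slowly varying correction via \reff{repl}--\reff{liminfS2}.

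One technical point where your narration is slightly off but the paper's execution is cleaner: you say that for $l_n<k\leq\dz m^n$ one has $\ez_nk/b(k)\to\infty$, but near $k\asymp l_n$ this ratio is $\asymp 1$ since $J(l_n)\sim\ez_n^{-1}$. What actually permits the Fuk--Nagaev bound \reff{Na46} is not $\ez_nk/b(k)\to\infty$ but the validity condition $k>\big(4E[X_1^\az 1_{\{X_1\geq0\}}]s^\az\big)^{1/(\az-1)}\ez_n^{\az/(1-\az)}$; using \reff{liminfS2} this threshold is $\leq A_l\,l_n$ for some constant $A_l$, and the paper therefore splits at $A_l\,l_n$ rather than at $l_n$, bounding $[1,A_l\,l_n]$ crudely as in Part~1. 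Likewise, for this lemma no appeal to Theorems 9.2--9.3 of \cite{DDS08} is needed; those sharp asymptotics enter only in Lemma~\ref{main32b} where matching (not merely an upper bound) is required. Finally, the specific choice $s=\tfrac{4\gz}{\az-1}$ (so that $(1-\az)s/2+\gz=-\gz$) is what makes the remainder term come out at order $l_n^{\gz}m^{-\gz n}$; your phrasing ``so that $(1-t)r<\gz$ fails appropriately'' should be replaced by this exact calibration. None of these affects the correctness of the overall scheme.
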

\begin{proof} The proof will be divided into three parts.

{\it Part 1:} We shall prove (\ref{main33a1}) which can be obtained by noting (\ref{local}) and
\beqnn
\sum_{1\leq k\leq \dz {l_n}}P(Z_n=k)P(S_k>k\ez_n)\ar\leq\ar\sum_{1\leq k\leq \dz {l_n}}P(Z_n=k)\cr\ar\leq\ar \frac{C}{m^{\gz n}} \sum_{1\leq k\leq\dz {l_n}}k^{\gz-1}\cr\ar\leq\ar C\dz^{\gz} l_n^{\gz}m^{-\gz n}.
\eeqnn

\smallskip

\noindent {\it Part 2:} We shall first prove (\ref{Na41}) and (\ref{Na47}).  Recall Corollary 1.6 of \cite{Na79}:
If $A_t^+:=E[X_1^{t}1_{\{X_1\geq0\}}]<\infty$ and $y^t\geq 4kA_t^+$ for some $1\leq t\leq 2$, then for $x>y$
\beqlb\label{Na1.6}
P(S_k\geq x)\leq kP(X_1>y)+(e^2kA_t^+/xy^{t-1})^{x/2y}.
\eeqlb
Thus if $s>1$, $1\leq t<\bz$ and
$$
k> \left(4 E[X_1^{t}1_{\{X_1\geq0\}}]s^{t}\right)^{1/(t-1)}\ez_n^{t/(1-t)},%=:A_{\az},
$$
then
\beqlb\label{Na46}
P(S_k\geq k\ez_n)\leq kP(X_1\geq s^{-1}k\ez_n )+C(\ez_n )^{ -t s/2}k^{(1-t)s/2}.
\eeqlb
 Furthermore, (\ref{liminfS2}) implies that there exists $A_l>0$ such that (\ref{Na46}) holds for $t=\az$  and all $k>A_{l}{l_n}$.
Thus
\beqlb\label{Na43}
\ar\ar\sum_{k\leq \dz m^n}P(Z_n=k)P(S_k\geq k\ez_n)\cr\ar\ar\quad \leq\sum_{1\leq k\leq A_{l}{l_n} }P(Z_n=k)P(S_k\geq k\ez_n)+\sum_{A_{l}{l_n}<k\leq \dz m^n} P(Z_n=k)P(S_k\geq k\ez_n)\cr
\ar\ar\quad=: I_1+I_2.
\eeqlb
Applying (\ref{local}) again gives
\beqlb\label{Na44}
I_1\leq\sum_{1\leq k\leq A_{l}{l_n} }P(Z_n=k)\ar\leq\ar \frac{c}{m^{\gz n}} \sum_{1\leq k\leq A_{l}{l_n} }k^{\gz-1}\cr\ar\leq\ar CA_l l_n^{\gz} m^{-\gz n}.%=o(\ez_n^{-\bz}m^{(1-\bz)n} ).
\eeqlb
Note that $l_n\ez_n\rar\infty$.
Applying (\ref{Na46}) with $t=\az$, (\ref{regular}) with $\eta<\gz-\bz+1$ and (\ref{local}), we have
\beqlb\label{Na45}
I_2 \ar\leq\ar \sum_{A_{l}{l_n}<k\leq \dz m^n}P(Z_n=k)\l(kP(X_1\geq s^{-1}k\ez_n )+C(\ez_n )^{ -\az s/2}k^{(1-\az)s/2}\r)\cr
\ar\leq \ar \frac{C}{m^{\gz n}} \l(\sum_{A_l l_n< k\leq \dz m^n}{k^{\gz}}P(X_1\geq s^{-1}k\ez_n )+\sum_{k>A_{l}{l_n}}(\ez_n )^{ -\az s/2}k^{(1-\az)s/2+\gz-1}\r)
\cr\ar\leq \ar \frac{C}{m^{\gz n}} \l(L( \ez_nm^n)\sum_{ k\leq \dz m^n }\ez_n^{-\bz}k^{\gz-\bz}(k/m^n)^{-\eta}+\sum_{k>A_{l}{l_n}}(\ez_n )^{ -\az s/2}k^{(1-\az)s/2+\gz-1}\r)
\cr\ar\leq\ar
C\dz^{\gz+1-\bz-\eta} \ez_n^{-\bz}m^{(1-\bz)n}L( \ez_nm^n)+ C A_l^{-2\gz}l_n^{\gz}m^{-\gz n}s'(\ez_n^{-1})^{-2\gz},
\eeqlb
where in the last inequality, we use (\ref{repl}), (\ref{liminfS2}) and choose $s=\frac{4\gz}{\az-1}$ which implies
$ (1-\az)s/2+\gz=-\gz.$ Plugging (\ref{Na44}) and (\ref{Na45}) into (\ref{Na43}), together with (\ref{liminfS2}), gives (\ref{Na41}). Replacing $ A_{l}$ and $\dz$ by $A$ and modifying the last two steps in (\ref{Na45}) accordingly, we immediately obtain (\ref{Na47}).

\smallskip

\noindent{\it Part 3:} We shall prove (\ref{Na42}). Note that $\ez_nm^nb(m^n)^{-1}\rar +\infty$  implies $l_n\leq m^n$. Using (\ref{Na46}) with $s=\frac{4\gz}{\az-1}$ and (\ref{regular}) with $\eta=\bz$, we have
\beqnn
 \ar\ar\sum_{k\geq  A m^n}P(Z_n=k)P(S_k\ge k\ez_n)
 \cr\ar\ar\quad
 \leq \sum_{k\geq  A m^n}P(Z_n=k)\l(kP(X_1\geq s^{-1}k\ez_n )+C(\ez_n )^{ -\az s/2}k^{(1-\az)s/2}\r)
\cr\ar\ar\quad
\leq C\l(\sum_{k\geq  A m^n}P(Z_n=k)\ez_n^{-\bz}k^{1-\bz}L(s^{-1}k\ez_n )+\sum_{k>A {l_n}}(\ez_n )^{ -\az s/2}k^{(1-\az)s/2+\gz-1}m^{-\gz n}\r)
\cr\ar\ar\quad
\leq C\ez_n^{-\bz}m^{(1-\bz)n}L( \ez_nm^n)+ C A^{-2\gz}l_n^{\gz}m^{-\gz n}s'(\ez_n^{-1})^{-2\gz},
\eeqnn
where the second term in the last inequality is deduced according to similar reasonings for  (\ref{Na45}).
Then  (\ref{Na42}) follows readily.
\end{proof}

%Similarly, take $X_1\overset{d}{=}1-W\leq 1$, $t=2$ in (\ref{Na46}).
%Note that $A_t^+\leq1$. Take $\bz<r<2$ and choose $s>1$ large enough so that $rs/2-s+\bz-1<0.$ Then we choose $n$ large enough so that $s^{-1}\ez m^n>1$. Then we  have
%\beqnn
%P\l(W-W_n\geq\ez\r)\ar\leq\ar \l(\sum_{1\leq k\leq s^{-r}\ez^r m^{rn}/4}+\sum_{k>s^{-r}\ez^r m^{rn}/4}\r)P(Z_n=k)P(S_k\geq m^n\ez)\cr
%\ar\leq \ar \sum_{1\leq k\leq s^{-r}\ez^r m^{rn}/4}Ck^{s/2}m^{-sn}P(Z_n=k)+\sum_{k>s^{-r}\ez^r m^{rn}/4}P(Z_n=k)\cr
%\ar\leq \ar C m^{ns(r/2-1)}+P(W_n>Cm^{(r-1)n})
%\cr
%\ar\leq \ar C m^{ns(r/2-1)}+Cm^{(1-r)n},
%\eeqnn
%where we use the inequality $P(Z_n=k)\leq C/k$ and Markov's inequality. Then $rs/2-s+\bz-1<0$ and $\bz<r$ imply
%$$\lim_{n\rar\infty} m^{(\bz-1)n}L(m^n)^{-1}P\l(W-W_n\geq\ez\r)=0.$$
\end{remark}

\begin{lemma}\label{Na4}
Assume that $1\leq \az<2$, $p_+>0$, $\gamma>\beta-1$ , $\ez_nm^nb(m^n)^{-1}\rar +\infty$  and $\ez_n\rar \ez\in[0,\infty)$. Then there exists $\eta>0$ small  enough such that for any $0<\dz<1$,
\beqlb\label{main33a}
\sum_{k\leq  \dz m^n}P(Z_n=k)P(S_k\geq k\ez_n)\leq
C\dz^{\gz-\bz+1-\eta}L( \ez_n m^n)\ez_n^{-\bz  }m^{n(1-\bz)}.
\eeqlb
\end{lemma}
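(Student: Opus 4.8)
The plan is to bound $P(S_k\geq k\ez_n)$ uniformly over the relevant range of $k$ by a Fuk--Nagaev type estimate, then sum against the local bound \reff{local} for $P(Z_n=k)$. Since $p_+>0$ we are no longer in the spectrally negative situation, so the role played in Lemma \ref{Na4aa} by Corollary 1.6 of \cite{Na79} must be replaced by the inequality \reff{Na1a} (for $0<\az<1$) or by an appropriate truncated moment estimate coming from Theorem 1.1 in \cite{Na79} (for $1\leq\az<2$). The key point is that $\ez_n\rar\ez\in[0,\infty)$ is bounded, so $\ez_nk$ is of order $k$ and the truncated moments $E[X_1^t;1_{\{0\leq X_1\leq r^{-1}\ez_nk\}}]$ are, up to constants, comparable to $k^{t-\bz}$ for $t>\bz$ by \reff{tndm}; this is what produces the gain in powers of $k$.

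First I would fix $1<t\leq 2$ with $t<\bz$ (when $1\leq\az<2$; when $0<\az<1$ one works directly from \reff{Na1a}) and apply the relevant Nagaev inequality to get, for $k$ in the range $k\leq\dz m^n$,
\[
P(S_k\geq k\ez_n)\leq kP(X_1\geq r^{-1}\ez_nk)+C\ez_n^{-tr}k^{(1-t)r}
\]
for a free parameter $r>0$. Using $P(X_1\geq x)\sim x^{-\bz}L(x)$ together with the Potter-type bound \reff{regular}, the first term is $\leq C\ez_n^{-\bz}k^{1-\bz}L(\ez_nm^n)(k/m^n)^{-\eta}$ for any small $\eta>0$. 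Summing $k^{\gz-1}\cdot k^{1-\bz}(k/m^n)^{-\eta}$ over $k\leq\dz m^n$ and dividing by $m^{\gz n}$ gives, since $\gz-\bz+1-\eta>0$ for $\eta$ small, a bound of order $\dz^{\gz-\bz+1-\eta}\ez_n^{-\bz}m^{n(1-\bz)}L(\ez_nm^n)$, which is exactly the right-hand side of \reff{main33a}. It remains to absorb the second term $\ez_n^{-tr}k^{(1-t)r}$: one chooses $r$ large, say so that $(1-t)r+\gz\leq -\gz$ (as in \reff{Na45}), so that $\sum_{k}k^{(1-t)r+\gz-1}m^{-\gz n}$ is controlled by $l_n^{\gz}m^{-\gz n}$ or simply by a lower-order term, and then checks via $\ez_nm^nb(m^n)^{-1}\rar+\infty$ (equivalently $l_n\ez_n\rar\infty$ and $l_n\leq m^n$) that this remainder is $o\big(\ez_n^{-\bz}m^{n(1-\bz)}L(\ez_nm^n)\big)$, cf. \reff{o1}; when $\ez_n\rar\ez>0$ this is immediate since $\ez_n$ is bounded above and below.

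The main obstacle I expect is the case $\az=1$ with $p_+>0$: there $X_1$ need not have a first moment in the usual sense and the centering is delicate, which is precisely why the third bullet of Assumption B (namely $\mu(1;x)=0$ for all $x$) is imposed. In that regime I would invoke Nagaev's inequality in the symmetric/zero-truncated-mean form so that no linear correction term appears, and use \reff{muasy} to control $\mu(2;\cdot)$; the bounded sequence $\ez_n$ keeps all truncation levels comparable to $k$, so the estimates \reff{tndm}--\reff{tndma} still apply with $x\asymp k$. A secondary but routine nuisance is keeping track of the slowly varying factor: every appearance of $L(\ez_nk)$ is compared to $L(\ez_nm^n)$ through \reff{regular} with exponent $\eta$, and one must make sure the single choice of $\eta$ simultaneously satisfies $\eta<\gz-\bz+1$ (for summability near $k\sim\dz m^n$) — this is why the statement only claims the bound with "some $\eta>0$ small enough." Once these points are handled, plugging everything into $\sum_{k\leq\dz m^n}P(Z_n=k)P(S_k\geq k\ez_n)$ and collecting terms yields \reff{main33a}.
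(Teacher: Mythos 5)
The paper's proof begins with the structural observation that $p_+>0$, together with the domain-of-attraction hypothesis, forces $\az=\bz$, and then proves the uniform pointwise bound \reff{Nabz} via Pruitt's inequality (the Lemma in \cite{Pr81}) for $1<\bz<2$, and via Nagaev's Theorem~1.2 combined with the Assumption~B condition $\mu(1;x)\equiv0$ when $\az=\bz=1$. Pruitt's bound holds for \emph{every} $k\geq1$ and $x>0$ with no side condition, and the only input needed is the behaviour of the truncated moments $\mu(1;x),\mu(2;x)$ supplied by \reff{muasy} and \reff{Feller3}; that is what makes Step~1 clean. Your proposal instead recycles the Fuk--Nagaev machinery of Lemma~\ref{Na4aa}, i.e.\ a $t\in(1,2)$ Nagaev-type estimate giving a remainder $C\ez_n^{-tr}k^{(1-t)r}$, while citing "Theorem~1.1 in \cite{Na79}"; but Theorem~1.1 is the $0<t\le1$ version and yields no gain over $kP(X_1\geq\cdot)$ for $\az\geq1$ — the tool you actually want is Corollary~1.6 (or Theorem~1.2 in the $\bz=1$ case, which you do invoke). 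The genuine gap is that Corollary~1.6 only applies once $k$ exceeds a threshold of order $\ez_n^{-t/(t-1)}$. The lemma permits $\ez_n\rar0$, and since $p_+>0$ forces $\az=\bz$ you must take $t<\bz=\az$, which makes this threshold of strictly larger order than $l_n$; the region $k\lesssim\ez_n^{-t/(t-1)}$ is then non-negligible, its trivial bound via \reff{local} is $\dz$-independent, and so it cannot by itself reproduce the $\dz^{\gz-\bz+1-\eta}$ factor on the right of \reff{main33a} — you would need a separate argument here which you do not give. Pruitt's inequality carries no such threshold, which is precisely why the paper switches tools for $p_+>0$ rather than reusing the $p_+=0$ argument. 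Your treatment of the $\az=1$ subcase (zero truncated mean plus \reff{muasy}) matches the paper's Step~2 in spirit, and your final summation against \reff{local} and \reff{regular} coincides with the paper's Step~3.
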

\begin{proof} The proof will be divided into three steps.

{\it Step 1:} Note that $p_+>0$ implies $\az=\bz$. We first prove that
\beqlb\label{Nabz}
P(S_k\geq \ez_n k)\leq C\left(k P(X_1\geq r^{-1}\ez_nk)+ \ez_n^{-\bz  }k^{(1-\bz)}L(\ez_n k)\right), \quad k\geq1.
\eeqlb
Recall (\ref{mu12}). By Lemma in \cite{Pr81}, we have for $k\geq1$ and $x>0$,
\beqlb\label{NaA}
P(S_k\geq x)\leq Ck\left(P(|X_1|\geq x)+\frac{\mu(2;x)}{x^2}+\frac{|\mu(1;x)|}{x}\right).
\eeqlb
%where $$
%\mu(x)=E[X_1\cdot1_{\{|X_1|\leq x\}}],\quad
%\mu(2;x)=E[|X_1|^2\cdot1_{\{|X_1|\leq x\}}].
%$$
%
% Then
%we have  $$E[|X_1|^2\cdot1_{\{|X_1|\leq x\}}]\leq cx^{2-\bz}L(x),\quad \text{for }x\geq1,$$
%and $$E[|X_1|^2\cdot1_{\{|X_1|\leq x\}}]\leq x^2\leq cx^{2-\bz}L(x),\quad \text{for }x\leq1, $$
%which implies
(\ref{muasy}) implies, for $1<\bz<2$, \beqlb\label{2ndm}
\mu(2;x)=E[|X_1|^2\cdot1_{\{|X_1|\leq x\}}]\leq cx^{2-\bz}L(x),\quad x>0.
\eeqlb
%and
 %\beqlb\label{2ndma}
%E[|X_1|\cdot1_{\{|X_1|\leq x\}}]\leq cx^{1-\bz},\quad x>0, 0<\bz<1.
%\eeqlb
On the other hand, according to (5.17), (5.21) and (5.22) in Chapter XVII in \cite{[F71]} as $x\rar\infty$,
\beqlb\label{Feller3}
\frac{x}{\mu(2;x)}E[|X_1|\cdot1_{\{|X_1|>x\}}]\rar c\neq0%\frac{2-\bz}{\bz-1},
\eeqlb which, together with   $E[X_1]=0$,  yields for $1<\bz<2$,
$$
|\mu(1;x)|=|E\l[X_1\cdot1_{\{|X_1|\leq x\}}\r]|\leq E\l[|X_1|\cdot1_{\{|X_1|> x\}}\r]\sim
cx^{-\bz}L(x).
$$
Thus for $1<\bz<2$, $$
|\mu(1;x)|\leq cx^{-\bz}L(x),\quad x>0.
$$ Then according to (\ref{NaA}), we obtain that (\ref{Nabz}) holds for $1<\bz<2$.

%which, together with (\ref{2ndm}) and (\ref{NaA}), implies that (\ref{Na1}) and (\ref{Na1a}) hold for %$1<\bz<2$.

%Finaly, if $\bz=1$, then $\mu(x)=0$ by our assumptions.
{\it Step 2:} We shall prove (\ref{Nabz}) for $\az=\bz=1$. By Theorem 1.2 in \cite{Na79}, we have
\beqlb\label{Na2}
P(S_k\geq x)\leq kP(X_1>x)+P(x),
\eeqlb
where
\beqnn P(x)=\exp\left\{1-\left(1+\frac{k\mu(2;x)-kx\mu(x)}{x^2}\right)\cdot
\log\left(\frac{x^2}{k\mu(2; x)}+1\right)\right\}.
\eeqnn
By Assumption B,% \beqlb \mu(2;x)-\mu(x)x\geq0.\eeqlb
\beqnn P( x)= \frac{e k\mu(2; x)}{x^2+k\mu(2; x)}\leq \frac{e k\mu(2; x)}{x^2},
\eeqnn
which, together with (\ref{Na2}) and (\ref{2ndm}), gives that (\ref{Nabz}) holds.

{\it Step 3:} We shall prove (\ref{main33a}). By using (\ref{Nabz}), (\ref{regular}) and (\ref{local}) accordingly,
\beqnn
\ar\ar\sum_{k\leq \dz m^n}P(Z_n=k)P(S_k\geq\ez_n k)\cr
\ar\ar\qquad\leq C\left(k P(X_1\geq \ez_nk)+\ez_n^{-\bz}k^{1-\bz}L(\ez_n k)\right)\cr
\ar\ar\qquad\leq C\ez_n^{-\bz  }\sum_{k\leq \dz m^n}P(Z_n=k)k^{1-\bz}L(\ez_n k)\cr
\ar\ar\qquad\leq CL( \ez_n m^n)\ez_n^{-\bz  }\sum_{k\leq \dz m^n}P(Z_n=k)k^{1-\bz}(k/m^n)^{-\eta}\cr
\ar\ar\qquad\leq CL( \ez_n m^n)\ez_n^{-\bz  }m^{(-\gz+\eta) n}\sum_{k\leq \dz m^n}k^{\gz-\bz-\eta}\cr
\ar\ar\qquad\leq C\dz^{\gz-\bz+1-\eta}L( \ez_n m^n)\ez_n^{-\bz  }m^{n(1-\bz)}.
\eeqnn
We have completed the proof.
\end{proof}
%\section{Preliminary }
\begin{lem}\label{main32b}
Suppose that $1<\az<2$,  $\gamma>\beta-1$,  $\ez_nm^nb(m^n)^{-1}\rar +\infty$ and $\ez_n\rar \ez\in[0,\infty).$ If  $\bz>\az$, we further assume that  \beqlb\label{XX}\lim_{n\rar\infty}\chi_n= y\in[0,\infty).\eeqlb    Then there exists $\eta>0$ small  enough such that for any $0<\dz<1$,
\beqlb\label{Ia2}
\limsup_{n\rar\infty}\bigg{|}\frac{m^{(\bz-1)n}\ez_n^{\beta}}{L(\ez_nm^n)}\sum_{ k>\dz m^n} P( S_k\geq \ez_n k )P(Z_n=k)-I_{\bz}\bigg{|}\leq C\dz^{\gamma-\bz+1-\eta}.
\eeqlb
\end{lem}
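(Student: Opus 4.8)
The plan is to reproduce the proof of Lemma~\ref{na5b}, the only genuinely new task being to control the far tail $k>Am^n$, which for $0<\az<1$ was isolated in Lemma~\ref{na5}. Fix $A>1$ and split $\sum_{k>\dz m^n}=\sum_{\dz m^n<k\le Am^n}+\sum_{k>Am^n}$; the parameter $A$ will be sent to $+\infty$ only after the limit in $n$ is taken.

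On the middle range $\dz m^n<k\le Am^n$ I would argue as in Lemma~\ref{na5b}: the uniform sharp large-deviation equivalence $P(S_k\ge x)=(1+o(1))\,kP(X_1\ge x)$ (Theorem~9.3 of \cite{DDS08} if $\az<\bz$, Theorem~3.3 of \cite{CH98} if $\az=\bz$) holds, because the threshold condition $m^nF(-\ez_nm^n)=o(1)$ (resp.\ $m^n(1-F(\ez_nm^n))=o(1)$) follows from $\ez_nm^nb(m^n)^{-1}\to+\infty$ through \reff{LB}, exactly as in the proof of Lemma~\ref{na5b}, and is uniform over this $k$-range. Hence
$$
\sum_{\dz m^n<k\le Am^n}P(Z_n=k)P(S_k\ge\ez_nk)=(1+o(1))\,\ez_n^{-\bz}\sum_{\dz m^n<k\le Am^n}L(\ez_nk)\,k^{1-\bz}P(Z_n=k).
$$
Writing the right-hand sum as $E[Z_n^{1-\bz}L(\ez_nZ_n)]$ minus the parts over $k\le\dz m^n$ and $k>Am^n$, I would invoke Lemma~\ref{lemIL}, \reff{Nega0}, with $t=1-\bz\in(-\gz,1)$ (legitimate since $\gz>\bz-1$) to obtain the main term $I_{\bz}$, the bound \reff{Ic} for the $k\le\dz m^n$ part, and the bound \reff{IcA} for the $k>Am^n$ part, so that, exactly as in \reff{Id} and the display following it, the normalised middle-range contribution to the expression in \reff{Ia2} lies within $(1+o(1))\,C\big(\int_A^{\infty}u\,\omega(u)\,du+\dz^{\gz-\bz+1-\eta}\big)$ of $I_{\bz}$, for small $\eta<\gz-\bz+1$.

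For the far tail $k>Am^n$ I would use the estimates already proved. If $\bz>\az$, i.e.\ $p_+=0$, I re-run the proof of \reff{Na42}, now tracking the $A$-dependence: its first term is $(1+o(1))\,C\,\ez_n^{-\bz}m^{(1-\bz)n}L(\ez_nm^n)\int_A^{\infty}u\,\omega(u)\,du$ by the computation behind \reff{IcA}, and its second term, once multiplied by $m^{(\bz-1)n}\ez_n^{\bz}L(\ez_nm^n)^{-1}$, equals $A^{-2\gz}l_n^{\gz}m^{(\bz-1-\gz)n}\ez_n^{\bz}L(\ez_nm^n)^{-1}=A^{-2\gz}\chi_n(1+o(1))$ — here one uses $b(l_n)\sim\ez_nl_n$ and $l_n^{-1}b(l_n)m^n\sim\ez_nm^n$ to identify this with $\chi_n$ — which by \reff{XX} is at most $A^{-2\gz}y+o(1)$. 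If $\bz=\az$, i.e.\ $p_+>0$, I would instead use \reff{Nabz} from Lemma~\ref{Na4}, which holds for all $k\ge1$ with both summands $\asymp\ez_n^{-\bz}k^{1-\bz}L(\ez_nk)$, so that the \reff{IcA}-computation gives a normalised far-tail contribution $\le C\int_A^{\infty}u\,\omega(u)\,du+o(1)$ and no extra hypothesis is needed.

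Adding the two ranges, for each fixed $A>1$ the $\limsup_{n\to\infty}$ of the left-hand side of \reff{Ia2} is at most $C\dz^{\gz-\bz+1-\eta}+C\int_A^{\infty}u\,\omega(u)\,du+CA^{-2\gz}y$; since this upper bound and nothing else depends on $A$, and $\int_A^{\infty}u\,\omega(u)\,du\to0$ (because $\int_0^{\infty}u\,\omega(u)\,du=E[W]<\infty$) and $A^{-2\gz}y\to0$ as $A\to+\infty$, letting $A\to+\infty$ yields \reff{Ia2}. The routine part is the middle-range estimate, which reproduces the argument of Lemma~\ref{na5b}; the delicate step is the far tail, where one must choose the inequality appropriate to the sign of $p_+$ and, when $p_+=0$, recognise that hypothesis \reff{XX} — boundedness of $\chi_n$ — is exactly what forces the $A^{-2\gz}\chi_n$ term to vanish in the iterated limit. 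This is the mechanism that breaks in the regime $\chi_n\to\infty$, which is why Theorem~\ref{main3}(ii)--(iii) exhibit genuinely different rates there.
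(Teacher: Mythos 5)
There is a genuine gap in your middle-range step, at precisely the point where the hypothesis \reff{XX} must do its work. You assert that for $\dz m^n<k\le Am^n$ the uniform equivalence $P(S_k\ge x)=(1+o(1))kP(X_1\ge x)$, $x\ge\ez_nk$, follows from $\ez_nm^nb(m^n)^{-1}\to\infty$ via the threshold $m^nF(-\ez_nm^n)=o(1)$ of Theorem~9.3 of \cite{DDS08}, ``exactly as in the proof of Lemma~\ref{na5b}''. That transfer is false for $1<\az<2$, $p_+=0$ (i.e.\ $\bz>\az$): the paper instead invokes Theorem~9.2 of \cite{DDS08}, whose big-jump threshold is of the order $(\log k)^{(\az-1)/\az}b(k)$, which exceeds $b(k)$ by a nontrivial logarithmic factor. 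Checking that $\ez_n k$ clears this threshold uniformly over $k>\dz m^n$ is exactly display \reff{XX1}, and the derivation of \reff{XX1} from \reff{XX} via \reff{XX2}, \reff{XX4}, \reff{XX3} is the only place in the proof where the extra hypothesis is used. Your criterion gives only $\ez_nm^n/b(m^n)\to\infty$, which does not by itself dominate the additional $(\log m^n)^{(\az-1)/\az}$, so $\eta_n=o(1)$ over $k>\dz m^n$ is not established.

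Consequently your diagnosis of where $\chi_n$ acts is misplaced. You locate it in the far tail $k>Am^n$, where the normalised second term of \reff{Na42} is $A^{-2\gz}\chi_n(1+o(1))$; that identification is arithmetically correct but incidental, and the paper never needs it: there is no far-tail split and no $A$-truncation. Once $\eta_n=o(1)$ holds uniformly on all of $k>\dz m^n$, the whole normalised sum is, up to $(1+o(1))$, equal to $m^{(\bz-1)n}L(\ez_nm^n)^{-1}\sum_{k>\dz m^n}L(\ez_nk)k^{1-\bz}P(Z_n=k)$, and \reff{Nega0} with $t=1-\bz$ together with \reff{Ic} gives the bound $C\dz^{\gz-\bz+1-\eta}$ in one stroke. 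The step you describe as routine is therefore the one carrying the real content of the lemma, and your argument leaves it unproved when $p_+=0$. (For $\az=\bz$, with \cite{R95} or \cite{CH98} as the paper indicates, the simple threshold does suffice --- consistent with the fact that $\chi_n$ is only defined, and hypothesised upon, when $\az<\bz$.)
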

\begin{proof} First, if $1<\az<2$ and $p_+=0$, then by Theorem 9.2 in \cite{DDS08}, \beqlb\label{ldpsum}
\lim_{k\rar\infty}\sup_{x\geq x_k }\bigg{|}\frac{P(S_k\geq x)}{kP(X_1\geq x)}-1\bigg{|}=0
\eeqlb holds for any $x_k=t (\frac{\beta-\az}{\az-1}\log k)^{\frac{\az-1}{\az}}b(k), t>0.$  Define $$\eta_n := \sup_{k>\dz m^n} \sup_{x\geq \ez_n k }\bigg{|}\frac{P(S_k\geq x)}{kP(X_1\geq x)}-1\bigg{|}.$$%\quad %\eta_n':=\sup_{x\geq\dz\ez_n k}|L(x)-L(\infty)|.
Then one can apply (\ref{ldpsum}) with $x_k=k\ez_n$ to ensure $\eta_n=o(1)$. To apply (\ref{ldpsum}) it suffices to show
\beqlb\label{XX1} \liminf_{n\rar\infty}\frac{m^n\ez_n}{b(\dz m^n)(\ln (m^n))^{\frac{\az-1}{\az}}}\rar+\infty.\eeqlb
In fact, since $L$ and $s$ are  slowly varying functions, then for any $\eta, \eta'>0$, there exists $C_{\eta}, C_{\eta'}$ such that
 $$L(l_n^{-1} b(l_n)m^n)\leq C_{\eta}l_n^{-\eta}b(l_n)^{\eta}m^{\eta n}$$
 and
\beqlb\label{XX2}
 \frac{l_n^{\gz-\bz}m^{(\bz-1-\gz)n} b(l_n)^{\bz}}{L(l_n^{-1} b(l_n)m^n)} \ar\geq\ar C_{\eta}\frac{l_n^{\gz-\bz+\frac{\bz}{\az}+\eta-\frac{\eta}{\az}}}{ m^{(\gz-\bz+1+\eta)n}}\frac{s(l_n)^{\bz}}{s(l_n)^{\eta}}
 \cr\ar\geq\ar
  C_{\eta}C_{\eta'}
  \frac{l_n^{\gz-\bz+\frac{\bz}{\az}+\eta-\frac{\eta}{\az}-\bz\eta'-\eta\eta'}}{ m^{(\gz-\bz+1+\eta)n}}.
 \eeqlb
Since $\az<\bz$, then one could choose $\eta, \eta'$ small enough such that
\beqlb\label{XX4}
0< \chi:=  \frac{ \gz-\bz+1+\eta}{{\gz-\bz+\frac{\bz}{\az}+\eta-\frac{\eta}{\az}-\bz\eta'-\eta\eta'}}<1.
 \eeqlb
 Thus (\ref{XX}) and (\ref{XX2}) imply
\beqlb\label{XX3}\limsup_{n\rar\infty}\frac{m^{\chi n}}{l_n}\in (0, +\infty].\eeqlb
We also note that for any $\eta''>0$,
\beqnn
\dz m^n\ez_n/b(\dz m^n)=\l(\frac{\dz m^n}{l^n}\r)^{\frac{\az-1}{\az}}\frac{s(l_n)}{s(\dz m^n)}
\geq C_{\eta''}\l(\frac{\dz m^{\chi n}}{l_n}\r)^{\frac{\az-1}{\az} } (\dz m^{(1-\chi)n})^{\frac{\az-1}{\az} }  \l(\frac{l_n}{\dz m^n}\r)^{\eta''}.
\eeqnn
Choosing $\eta''$ small enough in above, together with (\ref{XX3}) and (\ref{XX4}), yields that (\ref{XX1}) holds. We  get that $\eta_n=o(1)$. The rest proof for the case of $1<\az<2$ and $\bz>\az$ is similar to Lemma \ref{na5b}. We omit it here. %We only give an outline. We first have
%\beqlb\label{Ib2}
%\sum_{ k>\dz m^n}P(Z_n=k)P( S_k\geq  \ez_n k )\ar=\ar(1+\eta_n)\sum_{ k>\dz m^n} kP(Z_n=k)P(X_1\geq \ez_n k )\cr
%\ar=\ar (1+\eta_n)\ez_n^{-\beta}\sum_{ k>\dz m^n} k^{1-\beta}L(\ez_n k)P(Z_n=k).
%%\cr
%% \ar=\ar (1+\eta_n)(1+\eta_n')\ez_n^{-\beta}L(\infty)\sum_{ k>\dz m^n} %k^{1-\beta}P(Z_n=k).
%\eeqlb
%We also have that
%\beqlb\label{Ic2}
%L(\ez_n m^n)^{-1}m^{(\bz-1)n}\sum_{k<\dz m^n}L(\ez_nk)k^{1-\beta}P(Z_n=k)\leq C\dz^{1+\gamma-\bz}
%\eeqlb
%and  as $n\rar\infty$,
%$$E[Z_n^{1-\bz}L(\ez_n Z_n)]\sim
%\frac{m^{(1-\bz)n} L(\ez_nm^n)}{\Gamma(\bz-1)}\int_0^{\infty}E[e^{-sW}]s^{\bz-2}ds.$$
% Thus
%\beqlb\label{Id2} \bigg{|} m^{(\bz-1)n}L(\ez_nm^n)^{-1}\sum_{ k>\dz m^n} k^{1-\beta}P(Z_n=k)- I_{\bz}\bigg{|}\leq C\dz^{1+\gamma-\bz}
%\eeqlb
%and  by (\ref{Ib2}), as $n\rar\infty$,
%\beqnn
%\ar\ar \bigg{|}m^{(\bz-1)n}\ez_n^{\beta}L(\ez_nm^n)^{-1}\sum_{k>\dz m^n}P( S_k\geq  \ez_n k )P(Z_n=k)-I_{\bz}\bigg{|}\leq   (1+o(1))C\dz^{1+\gamma-\bz}.
%\eeqnn
%So the desired result follows readily if $1<\az<2$ and $\bz>\az$.

When $1<\az=\bz<2$ and $p_+=1$, (\ref{ldpsum}) holds for $x_k$ satisfying $x_k/b(k)\rar\infty$; see \cite{R95} and references therein. Obviously, in this case $\eta_n=o(1)$.

When $1\leq\az=\bz<2$ and $0<p_+<1$, (\ref{ldpsum}) holds for $x_k$ satisfying $ kP(X_1>x_k)\rar0$ and $\frac{k}{x_k}\int_{-x_k}^{x_k}xdF(x)\rar0 $; see Theorem 3.3 in \cite{CH98}. By using (\ref{regular}), (\ref{LB}) and the fact $\ez_nm^nb(m^n)^{-1}\rar\infty$, one can check that $\eta_n=o(1)$.  Then the desired result can be proved similarly.
\end{proof}
\begin{lemma}Assume that $1<\az<2$, $p_+=0$, $\gamma>\beta-1$, $\ez_nm^nb(m^n)^{-1}\rar +\infty$  and $\ez_n\rar 0$. Then
\beqlb\label{main33b}
  V_I(\dz, A)\ar\leq\ar\varliminf_{n\rar\infty}m^{\gz n}l_n^{-\gz}\sum_{\dz l_n <k< A l_n}P(Z_n=k)P(S_k\geq k\ez_n)\cr\ar\leq\ar \varlimsup_{n\rar\infty}m^{\gz n}l_n^{-\gz}\sum_{\dz l_n <k< A l_n}P(Z_n=k)P(S_k\geq k\ez_n)\leq V_S(\dz, A),
\eeqlb
  where
  \beqnn
  V_I(\dz, A)\ar=\ar\varliminf_{u\rar0}u^{1-\gz}\omega(u)\int_{\dz}^{A}u^{\gz-1}P(U_{s}\geq u^{\frac{\az-1}{\az}})du,\cr
  V_S(\dz,A)\ar=\ar \varlimsup_{u\rar0}u^{1-\gz}\omega(u)\int_{\dz}^{A}u^{\gz-1}P(U_{s}\geq u^{\frac{\az-1}{\az}})du.
  \eeqnn
\end{lemma}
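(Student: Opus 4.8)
The plan is to localize the sum around $k \approx u\,l_n$ with $u \in [\delta, A]$, apply the strong-ratio (precise large deviation) asymptotics for $P(S_k \geq k\ez_n)$ on this range, and then recognize the resulting Riemann-type sum as the integral appearing in $V_I(\delta,A)$ and $V_S(\delta,A)$. First I would show that for $k$ comparable to $l_n$ (say $\delta l_n \le k \le A l_n$) we have $k\ez_n \asymp b(k)$, since $l_n = l(\ez_n^{-1})$ is (asymptotically) the inverse of $J(x)=xb(x)^{-1}$, so $J(l_n)\sim \ez_n^{-1}$, i.e. $l_n \ez_n \sim b(l_n)$; by regular variation of $b$ and of $l$ this persists for $k$ in a bounded multiplicative window of $l_n$. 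Hence the normalized deviation $x/b(k)$ with $x = k\ez_n$ stays in a compact subinterval of $(0,\infty)$, in fact $k\ez_n/b(k) \to u^{(\az-1)/\az}$ when $k/l_n \to u$ (using $b(k)=k^{1/\az}s(k)$ and $l_n\ez_n\sim b(l_n)$ together with slow variation of $s$, via (\ref{regular}) applied to $s$).

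Next, I would invoke the local/global limit machinery for $Z_n$. By Corollary~5 of \cite{FW07} (used already in the proof of Lemma~\ref{lemIL}), for $k_n \le k \le A l_n$ with $k_n\to\infty$, $k_n = o(l_n)$, we have $P(Z_n=k) = (1+o(1))\,d\, m^{-n}\omega(k/m^n)$; but here $k/m^n \to 0$ since $l_n = o(m^n)$ (because $\ez_n m^n b(m^n)^{-1}\to\infty$ forces $l_n \le m^n$, and in fact $l_n/m^n\to 0$), so one uses the small-argument behavior (\ref{omega}): $\omega(k/m^n) \asymp (k/m^n)^{\gz-1}$. Combined with the stable-domain CLT (\ref{bstable}), which for bounded window of $k/l_n$ gives $P(S_k \geq k\ez_n) \to P(U_s \geq u^{(\az-1)/\az})$ when $k/l_n \to u$ — here I would need uniformity, which follows because convergence to a continuous limiting distribution is uniform and $b(k)^{-1}k\ez_n$ converges uniformly on the window — the summand becomes
$$
P(Z_n=k)P(S_k\ge k\ez_n) = (1+o(1))\, d\, m^{-n}\,\omega(k/m^n)\,P\!\left(U_s \geq (k/l_n)^{(\az-1)/\az}\right).
$$
Multiplying by $m^{\gz n} l_n^{-\gz}$ and writing $k = \lfloor u l_n\rfloor$, the sum over $\delta l_n < k < A l_n$ is a Riemann sum (with mesh $1/l_n\to 0$) for
$$
d \int_\delta^A (u)^{\gz-1}\,\frac{\omega(u l_n /m^n)}{(l_n/m^n)^{\gz-1}}\cdot(l_n/m^n)^{\gz-1}\cdot\frac{m^{\gz n}}{l_n^{\gz}}\cdot l_n^{-1}\,\cdots\ \text{— more cleanly: } \int_\delta^A u^{\gz-1}P(U_s\ge u^{(\az-1)/\az})\,\frac{\omega(u l_n/m^n)}{(u l_n/m^n)^{\gz-1}}\,du.
$$
Since $\omega(v)/v^{\gz-1}$ oscillates between its $\liminf$ and $\limsup$ as $v\downarrow 0$ but is bounded above and below by $C_1,C_2$ from (\ref{omega}), I would bound the sum between $\varliminf_{v\downarrow 0}(v^{1-\gz}\omega(v))\int_\delta^A u^{\gz-1}P(U_s\ge u^{(\az-1)/\az})du$ and the corresponding $\varlimsup$, which are exactly $V_I(\delta,A)$ and $V_S(\delta,A)$. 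The constant $d$ should be absorbed — I would double-check whether the statement intends $d=1$ here (the window $\delta l_n$ to $A l_n$ versus periodicity issues) or whether $d$ is implicitly folded into the definition; comparing with Theorem~\ref{main3}(ii), where no $d$ appears, suggests the periodicity does not survive this normalization in the Schröder-with-$\ez_n\to 0$ regime, so I would argue $d=1$ in (\ref{main33b}) or restrict attention to the aperiodic reduction.

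The main obstacle is establishing the needed \emph{uniformity} of the stable CLT $P(S_k \ge k\ez_n) = (1+o(1))P(U_s \ge u^{(\az-1)/\az})$ simultaneously over all $k$ in $(\delta l_n, A l_n)$ as $n\to\infty$: one must control both the rate of convergence in (\ref{bstable}) and the fluctuation of $b(k)^{-1}k\ez_n$ around $u^{(\az-1)/\az}$, uniformly. I expect this to follow from the Berry–Esseen-type or weak-convergence uniformity (the limit law $U_s$ has continuous distribution function, so pointwise convergence upgrades to uniform in the argument, and the argument $b(k)^{-1}k\ez_n$ is itself uniformly close to $(k/l_n)^{(\az-1)/\az}$ by (\ref{regular}) applied to $s$ on $[\delta,A]$), but making this rigorous while only assuming $X_1$ is in the domain of attraction — without a convergence rate — is the delicate point. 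A secondary technical nuisance is handling the transition at the endpoints $k\approx \delta l_n$ and $k\approx A l_n$ and the discreteness/periodicity constant $d$, both of which I would dispatch by the same Corollary~5 of \cite{FW07} estimates already in force.
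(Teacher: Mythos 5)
Your approach is essentially the paper's: localize $k$ to a window $(\dz l_n, A l_n)$, use the local limit theorem (Corollary~5 of \cite{FW07}) together with the bound (\ref{local}) to replace $P(Z_n=k)$ by $(1+o(1))\,d\,m^{-n}\omega(k/m^n)$, sandwich $\omega(k/m^n)$ via $\inf$ and $\sup$ of $u^{1-\gz}\omega(u)$ for $u\le A l_n/m^n$, replace $P(S_k\ge k\ez_n)$ by $P(U_s\ge k\ez_n/b(k))$ uniformly, and pass to the Riemann integral. Two of the points you flagged as delicate are in fact routine: the uniformity of the replacement $P(S_k\ge k\ez_n)\to P(U_s\ge k\ez_n/b(k))$ needs no rate of convergence --- since $U_s$ has a continuous distribution function, weak convergence in (\ref{bstable}) is automatically uniform in the argument (P\'olya's theorem), and on the window $k\ez_n/b(k)$ stays in a compact subinterval of $(0,\infty)$; this is exactly what the paper asserts via $\lim_n\sup_{k\in H_2}|P(S_k\ge k\ez_n)-P(U_s\ge k\ez_n/b(k))|=0$. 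Your handling of the periodicity constant $d$, however, is not quite right: you should neither assume $d=1$ nor pass to an aperiodic reduction. Instead, restrict the sum to $H_2=\{\dz l_n<k<Al_n: k\equiv 0\ (\mathrm{mod}\ d)\}$ (the support lattice of $Z_n$); the local limit theorem contributes a factor $d$, while the Riemann-sum conversion $\sum_{k\in H_2}f(k/l_n)\,l_n^{-1}\to d^{-1}\int_\dz^A f(u)\,du$ contributes a factor $d^{-1}$ because the lattice points are spaced $d$ apart, and the two cancel --- which is why no $d$ survives in (\ref{main33b}).
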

\begin{proof}Define \beqlb\label{H2}H_2=\{\dz l_n <k< A{l_n}: k= (\text{mod})d\}.\eeqlb
By Corollary 5 in \cite{FW07} and (\ref{local}),  we have
\beqlb\label{main33b1}
\ar\ar(1+o(1))d\inf_{u\leq A{l_n}m^{-n}}u^{1-\gz}\omega(u) \sum_{k\in H_2} \frac{k^{\gz-1}}{m^{\gz n}}P(S_k\geq \ez_n k)\cr \ar\ar\quad\leq
\sum_{k\in H_2}P(Z_n=k)P(S_k\geq \ez_n k)
\cr\ar\ar\quad= (1+o(1))d\sum_{k\in H_2} {m^{-n}}\omega\l(\frac{k}{m^n}\r) P(S_k\geq \ez_n k)
\cr\ar\ar\quad\leq (1+o(1))d\sup_{u\leq A{l_n}m^{-n}}u^{1-\gz}\omega(u) \sum_{k\in H_2} \frac{k^{\gz-1}}{m^{\gz n}}P(S_k\geq \ez_nk).
\eeqlb
Recall (\ref{bstable}).
Then  for any $\dz>0$
$$
\lim_{n\rar\infty}\sup_{k\in H_2}\l|P({S_k}\geq k\ez_n )-P(U_{s}\geq  k\ez_n/b(k))\r|=0.$$
Recall that $J(x)=xb(x)^{-1}$ and $l$ is the asymptotic inverse function of $J$. Then as $n\rar\infty$,
\beqlb\label{main33b2}
\sum_{k\in H_2}k^{\gz-1}P({S_k}\geq k\ez_n )\ar=\ar (1+o(1))\sum_{k\in H_2}k^{\gz-1}P\l(U_{s}\geq \frac{k\ez_n}{b(k)}\r)\cr
\ar=\ar (1+o(1))l_n^{\gz}\sum_{k\in H_2}\l(k l_n^{-1}\r)^{\gz-1}
P\l(U_{\az}\geq \frac{k\ez_n}{b(k)}\r)l_n^{-1}  \cr
\ar=\ar (1+o(1))d^{-1}l_n^{\gz}\int_{\dz}^{A}u^{\gz-1}P(U_{s}\geq u^{1-1/\az})du,
\eeqlb
where the last equality follows from the facts that
$$ \frac{k\ez_n}{b(k)}=\frac{k^{\frac{\az-1}{\az}}}{\ez_n^{-1}s(k)}\sim \frac{k^{\frac{\az-1}{\az}}}{J(l_n)s(k)}
=\frac{s(l_n)}{s(k)}\l(\frac{k}{l_n}\r)^{1-1/\az}
\quad\text{and}\quad
\lim_{n\rar\infty}\sup_{k\in H_2}\frac{s(l_n)}{s(k)}=1.
$$
Then letting $n\rar\infty$ in (\ref{main33b1}) and (\ref{main33b2}) implies the desired result by noting the fact ${l_n}m^{-n}\rar0$.
\bigskip
\end{proof}

{\it Proof of (i) in Theorem \ref{main3}:} If $\chi_n\rar0$, then we have
$$
l_n^{\gz}m^{-\gz n}m^{(\bz-1)n}\ez_n^{\beta}L(\ez_nm^n)^{-1}=o(1).
$$

Thus combining (\ref{Na41}) and Lemma \ref{main32b} together and letting $\dz\rar0$ yield the desired result. \qed

\bigskip%\bigskip

{\it Proof of (ii) in Theorem \ref{main3}:}
Recall H2 from (\ref{H2}).
By taking $A$ large enough in (\ref{Na42}) and (\ref{Na47}),  we have
\beqlb
\sum_{k\notin H_2}P(Z_n=k)P(S_k\geq k\ez_n)\ar=\ar\l(\sum_{1\leq k\leq \dz l_n }+\sum_{A l_n< k< A m^n}+ \sum_{k\geq A m^n}\r)P(Z_n=k)P(S_k\geq k\ez_n)
\cr\ar\leq\ar C(2+A^{\gz+1-\bz+\eta}) \ez_n^{-\bz}m^{(1-\bz)n}L(\ez_nm^n)\cr\ar\ar\qquad+C (A^{-2\gz} +\dz^{\gz}) l_n^{\gz}m^{-\gz n}.
\eeqlb
Since ${\chi_n}\rar\infty$, we have $\ez_n^{-\bz}m^{(1-\bz)n}L(\ez_nm^n)=o (l_n^{\gz}m^{-\gz n})$. Thus
$$
\varlimsup_{n\rar\infty}l_n^{-\gz}m^{\gz n}\sum_{k\notin H_2}P(Z_n=k)P(S_{k}\geq{k}\ez_n)\leq C (A^{-2\gz} +\dz^{\gz}).
$$
By (\ref{main33b}), we further have
\beqlb
V_I(\dz, A)\ar\leq\ar\varliminf_{n\rar\infty}l_n^{-\gz}m^{\gz n}P(S_{Z_n}\geq{Z_n}\ez_n)\cr\ar\leq\ar \varlimsup_{n\rar\infty}l_n^{-\gz}m^{\gz n}P(S_{Z_n}\geq{Z_n}\ez_n)\leq C (A^{-2\gz} +\dz^{\gz})+ V_S(\dz, A).
\eeqlb
Letting $\dz\rar0$ and $A\rar\infty$, together with the fact $V_I(\dz, A)\rar V_I$ and $V_S(\dz, A)\rar V_S$, yields  (\ref{main32c}). \qed

\bigskip%\bigskip

{\it Proof of (iii) in Theorem \ref{main3}:} Note that  ${\chi_n}\rar y\in (0,\infty)$ implies that $$l_n^{\gz}m^{-\gz n}\sim y\ez_n^{\bz}m^{(\bz-1)n}L(\ez_nm^n)^{-1}. $$
Then the desired result follows from (\ref{main33a}), (\ref{main33b}), (\ref{Na47}) and (\ref{Ia2}). \qed

\bigskip%\bigskip

{\it Proof of (iv) in Theorem \ref{main3}:}  Combining Lemmas \ref{Na4} and \ref{main32b} together and letting $\dz\rar0$ yield the desired result.
 We have completed the proof of Theorem \ref{main3}.\qed

\bigskip%\bigskip

\subsection{Proof of Theorem \ref{mainx}}
First, note that $\int_0^{\infty}P\l(U_s\geq u^{\frac{\az-1}{\az}}x\r) \omega(u)du<\infty.$ Then by (\ref{bstable}), for any $\dz>0$,
$$
\lim_{n\rar\infty}\sup_{k\geq \dz m^n}|P(S_k\geq \ez_n k)-P(U_s\geq \ez_nk/b(k))|=0.
$$
Thus
\beqnn
\sum_{k\geq \dz m^n}P(Z_n=k)P(S_k\geq\ez_nk)=(1+o(1))\sum_{k\geq \dz m^n}P(Z_n=k)P(U_{s}\geq \ez_nk/b(k)).
\eeqnn
Denote by $\bar{F}_s(x)= P(U_s\geq x)$.  Then we have
\beqnn
\ar\ar\sum_{k\geq \dz m^n}P(Z_n=k)P(S_k\geq\ez_nk)
\cr\ar\ar\quad=(1+o(1))\sum_{k\geq \dz m^n}P(Z_n=k)\bar{F}_s\l( \ez_nm^nb(m^n)^{-1}\l(\frac{k}{m^n}\r)^{\frac{\az-1}{\az}}\frac{s(m^n)}{s(k)} \r)\cr\ar\ar\quad=(1+o(1))E\l[\bar{F}_s\l( \ez_nm^nb(m^n)^{-1}\l(W_n\r)^{\frac{\az-1}{\az}}\frac{s(m^n)}{s(W_nm^n)} \r)1_{\{W_n\geq \dz\}}\r]\cr
\ar\ar\quad\rar \int_\dz^{\infty}\bar{F}_s\l( u^{\frac{\az-1}{\az}}x\r) \omega(u)du.\eeqnn
On the other hand, by (\ref{global}), as $n\rar\infty$,
\beqlb
\sum_{k\leq \dz m^n}P(Z_n=k)P(S_k\geq\ez_nk)\leq
\sum_{k\leq \dz m^n}P(Z_n=k)=(1+o(1))\int_0^\dz\omega(u)du.
\eeqlb
Letting $\dz$ go to $0$ yields the desired result.

\subsection{Proofs of Theorem \ref{SCmain3} and Corollary \ref{SCmain3Co}}
We first prove Theorem \ref{SCmain3}. Applying (\ref{Na46}) with  $\ez_n=\ez$, $k>C_s\ez^{\frac{t}{1-t}}=:A(s, t, \ez)$ and
$ s=\frac{2\gz+2}{t-1}>1$ implies
\beqnn
\ar\ar m^{\gz n}\sum_{k\geq1}P(Z_n=k)P(S_n\geq\ez k)\cr
\ar\ar\quad\leq C\sum_{k\geq1}k^{\gz-1}P(S_n\geq\ez k)\cr
\ar\ar\quad\leq C\sum_{k\leq A(s, t,\ez)}k^{\gz-1}+C\sum_{k>A(s, t,\ez)}k^{\gz-1}P(S_n\geq\ez k)\cr
\ar\ar\quad \leq CA(s, t, \ez)^{\gz}+C\sum_{k>A(s, t, \ez)}\l(k^{\gz}P(X_1\geq s^{-1}\ez k)
+\ez^{-t s/2}k^{(1-t)s/2+\gz-1}\r)\cr
\ar\ar\quad \leq CA(s, t, \ez)^{\gz}+C\sum_{k\geq1}k^{\gz}P(X_1\geq s^{-1}\ez k)
+\sum_{k>A(s, t, \ez) }\ez^{-t s/2}k^{-2}\cr
\ar\ar\quad<+\infty,
\eeqnn
where the last equality follows from $\gz<\bz-1$ or the fact that $\sum_{k\geq1}k^{\gz}P(X_1\geq s^{-1}\ez k)$ is finite because of $E[X_1^{1+\gz}1_{\{X_1>0\}}]<\infty$.
Then by dominated convergence theorem, we have
\beqlb\label{SCA}m^{\gz n}\sum_{k\geq1}P(Z_n=k)P(S_k\geq\ez k)\rar\sum_{k\geq1}q_kP(S_k\geq\ez k),\eeqlb
which yields Theorem \ref{SCmain3}. To prove Corollary \ref{SCmain3Co}, note that  by the same argument above, (\ref{SCA}) also holds with $X_1=m-Z_1$ .
Then  Corollary \ref{SCmain3Co} follows readily by applying (\ref{SCA}) twice.   \qed

%\subsection{Proofs of Theorems \ref{main1} and \ref{mainmx}}
%The proof of Theorem \ref{main1} is similar to Theorem \ref{main2}. We only give an outline.
%  Since $\ez_nm^n\rar\infty$, then by using (\ref{Na1}) for $0<\az\leq \bz<1$ and Theorem 1.2 in \cite{Na79} for $1\leq \az=\bz<2$, respectively, we have
%\beqlb\label{NaB1}
%P(S_k\geq \ez_n m_n)\leq k P(X_1\geq \ez_nm^n)+ Ck(\ez_nm^n)^{-\bz }, \quad k\geq1,
%\eeqlb
%which implies  for any $0<\dz<A<\infty$,
%\beqlb\label{leqb}\limsup_{n\rar\infty}m^{(\bz-1)n}\ez_n^{\beta}L(\ez_nm^n)^{-1}\sum_{k<\dz m^n}P(S_k\geq \ez_n m^n )P(Z_n=k)\ar\leq\ar C\int_0^{\dz}u\omega(u)du,\cr
%\label{leqb1}\limsup_{n\rar\infty}m^{(\bz-1)n}\ez_n^{\beta}L(\ez_nm^n)^{-1}\sum_{k>A m^n}P(S_k\geq \ez_n m^n )P(Z_n=k)\ar\leq\ar C\int_A^{\infty}u\omega(u)du.\eeqlb
%Then by Theorem 3.3 in \cite{CH98} and Theorem 9.3 in \cite{DDS08}, we obtain
%\beqlb\label{Ia1}
%\lim_{n\rar\infty}m^{(\bz-1)n}\ez_n^{\beta}L(\ez_nm^n)^{-1}\sum_{k=\dz m^n}^{Am^n}P( S_k\geq \ez_n m^n )P(Z_n=k)=\int_{\dz}^{A}u\omega(u)du.
%\eeqlb
%Letting $\dz\rar0$ and $A\rar\infty$ yields (\ref{case1a}). The proof of Theorem \ref{mainmx} is similar to Theorem \ref{mainx}, we omit it here.

\bigskip\bigskip
{\bf Acknowledgement}: We would like to give our sincere thanks to Dr. Weijuan Chu and Professors Xia Chen and Chunhua Ma for their enlightening discussions.

\bigskip

% \bibliographystyle{abbrv}
% \bibliography{biblio}

% \newcommand{\sortnoop}[1]{}
% \begin{thebibliography}{10}

\end{document}